\numberwithin{figure}{section}
\numberwithin{table}{section}
\theoremstyle{plain}
\newtheorem{theorem}{Theorem}[section]
\newtheorem{proposition}[theorem]{Proposition}
\newtheorem{lemma}[theorem]{Lemma}
\theoremstyle{definition}
\newtheorem{definition}[theorem]{Definition}
\newtheorem{problem}[theorem]{Problem}
\theoremstyle{remark}
\newtheorem{example}[theorem]{Example}
\newtheorem{remark}[theorem]{Remark}
\tikzset{nomorepostaction/.code=\let\tikz@postactions\pgfutil@empty}
\tikzset{vertex/.style={shape=circle,fill,inner sep=0,minimum size=0.333em}}
\tikzset{edge/.style={draw,decoration={
    markings,
    mark=at position 0.6 with {\arrow[scale=1.2]{latex'}}},
  postaction={nomorepostaction,decorate}}}
\begin{document}

\title[Metrics on graphs and other structures]{%
  Hausdorff and Wasserstein metrics on graphs and other structured data}
\author[E. Patterson]{Evan Patterson}
\address{Stanford University, Statistics Department}
\email{epatters@stanford.edu}

\begin{abstract}
  Optimal transport is widely used in pure and applied mathematics to find
  probabilistic solutions to hard combinatorial matching problems. We extend the
  Wasserstein metric and other elements of optimal transport from the matching
  of sets to the matching of graphs and other structured data. This
  structure-preserving form of optimal transport relaxes the usual notion of
  homomorphism between structures. It applies to graphs, directed and
  undirected, labeled and unlabeled, and to any other structure that can be
  realized as a $\cat{C}$-set for some finitely presented category $\cat{C}$. We
  construct both Hausdorff-style and Wasserstein-style metrics on $\cat{C}$-sets
  and we show that the latter are convex relaxations of the former. Like the
  classical Wasserstein metric, the Wasserstein metric on $\cat{C}$-sets is the
  value of a linear program and is therefore efficiently computable.
\end{abstract}

\maketitle

\section{Introduction}
\label{sec:introduction}

How do you measure the distance between two graphs or, in a broader sense,
quantify the similarity or dissimilarity of two graphs? Metrics and other
dissimilarity measures are useful tools for mathematicians who study graphs, and
also for practitioners in statistics, machine learning, and other fields who
analyze graph-structured data. Many distances and dissimilarities have been
proposed as part of the general study of graph matching
\parencite{emmert-streib2016,riesen2010}, yet current methods tend to suffer
from one of two problems. Methods that fully exploit the graph structure, such
as graph edit distances and related distances based on maximum common subgraphs
\parencite{bunke1997}, are generally NP-hard to compute and must be approximated
by heuristic search algorithms. Methods based on efficiently computable graph
substructures, such as random walks \parencite{kashima2003}, shortest paths
\parencite{borgwardt2005}, or graphlets \parencite{shervashidze2009}, are
computationally tractable by design but are only sensitive to the particular
substructure under consideration. Most kernel methods for graphs or other
discrete structures fall into this category
\parencite{haussler1999,vishwanathan2010}. Our aim is to construct a metric on
graphs that fully accounts for the graph structure, but attains computational
tractability in a principled way through convex relaxation.

The fundamental difficulty in graph matching is that the optimal correspondence
of vertices between two graphs is unknown and must be estimated from a
combinatorially large set of possibilities. The theory of optimal transport
\parencite{villani2003,villani2008,peyre2019}, now routinely used to find
probabilistic matchings of metric spaces \parencite{memoli2011}, suggests itself
as a general strategy to circumvent this combinatorial problem. Several authors
have proposed specific methods to match graphs or other structured data using
optimal transport \parencite{alvarez2018,courty2017,nikolentzos2017,vayer2018}.

Simplifying somewhat, current applications of optimal transport to graph
matching draw on two major ideas, the Wasserstein distance between measures
supported on a common metric space and the Gromov-Wasserstein distance between
metric measure spaces \parencite{sturm2006a,memoli2011}. If you have a way of
embedding the vertices of two graphs into a common metric space, say a Euclidean
space, then you can compute the Wasserstein distance between these two subspaces
\parencite{nikolentzos2017}. Alternatively, if you have a way of converting each
vertex set into its own metric space, then you can compute the
Gromov-Wasserstein distance between these disjoint spaces. In the latter case,
the distance between two vertices in a graph is often defined to be the length
of the shortest path between them, although there are other possibilities. The
two approaches, via the Wasserstein and Gromov-Wasserstein distances, can also
be combined \parencite{vayer2018}.

Methods of this style reduce the problem of matching graphs to that of matching
metric spaces, and then apply the usual tools of optimal transport for metric
matching. While this may suffice for some purposes, it is conceptually
unsatisfying for the simple reason that graphs are not identical with metric
spaces. Any information that cannot be encoded in the metric is lost to optimal
transport. Thus, if we take the shortest path distance on vertices, the optimal
coupling of vertices depends on the graph's edges only through the lengths of
the shortest paths.

Here we describe a form of optimal transport between graphs that makes no such
reduction. Probabilistic mappings are established between both vertex and edge
sets, and compatibility between the mappings is enforced according to the
nearest analogue of graph homomorphism. These probabilistic graph homomorphisms
are defined as solutions to linear programs and are therefore efficiently
computable.

Our methodology is not ultimately about graphs, but about how the idea of a
\emph{homomorphism}, or structure-preserving map, can be deformed both
probabilistically and metrically. We set forth a general notion of
structure-preserving optimal transport that applies to a limited but important
class of structures. This class encompasses directed, undirected, and bipartite
graphs; graphs with vertex attributes, edge attributes, or both; simplicial
sets, the higher-dimensional generalization of graphs; other variants of graphs,
such as hypergraphs; and unrelated structures. The ensuing optimization problems
are in all cases linear programs.

Other authors have proposed convex or otherwise tractable relaxations of graph
matching, based on spectral methods \parencite{cour2007}, semidefinite
programming \parencite{schellewald2005}, and doubly stochastic matrices
\parencite{aflalo2015}. Closest to ours is the last method, which relaxes the
vertex permutation of a graph isomorphism into a doubly stochastic matrix.
Unlike ours, this method does not straightforwardly generalize from graph
isomorphism to graph homomorphism or to metrics on graphs, nor to graphs with
vertex labels or to structures other than graphs.

\begin{figure}
  \centering
  \begin{tikzcd}[row sep=2em, column sep=-4em]
    & \begin{tabular}{c}
        $\cat{C}$-set morphisms \\ (\cref{sec:c-sets})
    \end{tabular}
    \ar{dl}[above left]{\text{convex relaxation}}
    \ar{dr}{\text{metric relaxation}} & \\
    \begin{tabular}{c}
      Markov $\cat{C}$-set morphisms \\
      (\cref{sec:markov})
    \end{tabular}
    \ar{dr}[below left]{\text{metric relaxation}} & &
    \begin{tabular}{c}
      Hausdorff metric on $\cat{C}$-sets \\
      (\cref{sec:hausdorff})
    \end{tabular}
    \ar{dl}{\text{convex relaxation}} \\
    & \begin{tabular}{c}
      Wasserstein metric on $\cat{C}$-sets \\
      (\cref{sec:wasserstein})
    \end{tabular} & \\
    & \begin{tabular}{c}
      Wasserstein metric on Markov kernels \\
      (\cref{sec:wasserstein-markov})
    \end{tabular}
    \ar{u}{\text{lifts to}} &
  \end{tikzcd}
  \caption{Relations of major concepts and section dependencies}
  \label{fig:outline}
\end{figure}
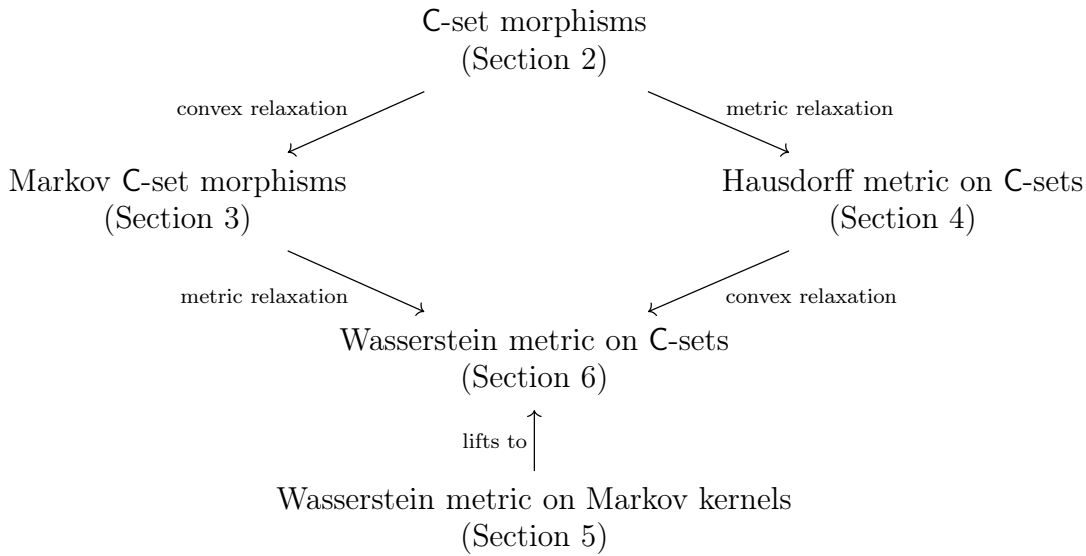

Let us outline in more detail the major concepts and sections of the paper. In
the mainly expository \cref{sec:c-sets}, we review $\cat{C}$-sets, the class of
structures treated throughout. We give numerous examples of $\cat{C}$-sets,
including several kinds of graphs. We also introduce their functorial semantics,
a device we use incessantly to equip $\cat{C}$-sets with probabilistic and
metric structure, beginning in \cref{sec:markov}. There we relax the
$\cat{C}$-set homomorphism problem by replacing functions with their
probabilistic analogue, Markov kernels.\footnotemark{} We arrive at a
feasibility problem reminiscent of optimal transport, although it is expressed
using Markov kernels instead of couplings.

\footnotetext{Readers unfamiliar with Markov kernels will find a brief review in
  \cref{app:markov}.}

We devote the rest of the paper to studying metrics on $\cat{C}$-sets, exact and
probabilistic. \cref{sec:hausdorff} isolates the purely metric aspects of the
problem. We establish a general method for lifting metrics on the hom-sets of a
category $\cat{S}$ to a metric on $\cat{C}$-sets in $\cat{S}$, and we
instantiate the theorem to define a Hausdorff-style metric on $\cat{C}$-sets.
This metric, which generalizes the classical Hausdorff metric on subsets of a
metric space, is generally hard to compute, but may be of theoretical interest.

We then set out to construct a Wasserstein-style metric on $\cat{C}$-sets, using
the same framework. To do this, we must take a detour in
\cref{sec:wasserstein-markov} to define a Wasserstein metric on Markov kernels.
The definition strikes us as very natural, though we can find no source for it
in the literature. It is possibly of independent interest, being the
probabilistic analogue of the $L^p$ metrics and the functional analogue of the
usual Wasserstein metric. Finally, in \cref{sec:wasserstein}, we bring together
these threads to construct a Wasserstein metric on $\cat{C}$-sets. It is a
convex relaxation of the Hausdorff metric on $\cat{C}$-sets and it is, like the
classical optimal transport problem, expressible as a linear program.

The relations between the major concepts are summarized in \cref{fig:outline},
which also serves as a dependency graph for the sections of the paper. If the
meaning of this diagram is not now entirely clear, we hope that it will become
so by the end.

\section{Graphs, \texorpdfstring{$\cat{C}$}{C}-sets, and functorial semantics}
\label{sec:c-sets}

Graphs belong to a class of algebraic structures known as $\cat{C}$-sets. As a
logical system, this class is extremely simple, yet it is broad enough to
encompass a range of useful and important structures, such as directed and
undirected graphs and their higher-dimensional generalizations. It also easily
accommodates the attachment of extra data to arbitrary substructures, as in
vertex- or edge-attributed graphs.

In this section we describe the essential elements of $\cat{C}$-sets, their
morphisms, and their functorial semantics. We give many examples that should be
useful in applications. Most of what we say appears in the literature on
category theory
\parencite{lawvere1989,lawvere2009,maclane1992,reyes2004,spivak2009,spivak2014},
but we assume of the reader nothing more than the definitions of a category, a
functor, and a natural transformation.

\begin{definition}[$\cat{C}$-sets]
  Let $\cat{C}$ be a small category. A \emph{$\cat{C}$-set}\footnotemark{} is a
  functor $X: \cat{C} \to \Set$ from the category $\cat{C}$ to the category of
  sets and functions.

  \footnotetext{$\cat{C}$-sets are more commonly called ``presheaves'' and
    defined to be contravariant functors $\cat{C}^\op \to \Set$. The name
    ``$\cat{C}$-sets'' \parencite{reyes2004} arises from \emph{category
      actions}, which generalize group actions ($G$-sets).}

  Thus, a $\cat{C}$-set $X$ consists of, for every object $c$ in $\cat{C}$, a
  set $X(c)$, and for every morphism $f: c \to c'$ in $\cat{C}$, a function
  $X(f): X(c) \to X(c')$, such that the assignment of functions preserves
  composition and identities.
\end{definition}

Our categories $\cat{C}$ will always have finite presentations, which we regard
as logical theories. A $\cat{C}$-set is then an instance or model of that
theory. A few examples will bring this out.

\begin{example}[Graphs]
  The \emph{theory of graphs} is the category with two objects and two parallel
  morphisms:
  \begin{equation*}
    \Theory{\Graph}  = \left\{
      \begin{tikzcd}
        E \rar[shift left=1]{\src}
          \rar[shift right=1][below]{\tgt}
          & V
      \end{tikzcd}
    \right\}.
  \end{equation*}
  A functor $X: \Theory{\Graph} \to \Set$ consists of a vertex set $X(V)$ and an
  edge set $X(E)$, together with source and target maps
  $X(\src), X(\tgt): X(E) \to X(V)$ which assign the source and target vertices
  of each edge. Thus, a $\Theory{\Graph}$-set is simply a graph.
\end{example}

\begin{figure}
  \centering
  \begin{minipage}{0.5\textwidth}
    \centering
    \begin{tikzpicture}[x=3em,y=2.5em]
      \node[vertex] (v1) at (0,0) {};
      \node[vertex] (v2) at (1,0) {};
      \node[vertex] (v3) at (2,0) {};
      \path[edge] (v1) to [out=45,in=135] (v2);
      \path[edge] (v1) to [out=-45,in=-135] (v2);
      \path[edge] (v2) to (v3);
      \path[edge] (v3) to [loop above,min distance=3em,out=90,in=0,looseness=10] (v3);
      \node[vertex] (v4) at (-0.707,0) {};
      \node[vertex] (v5) at (0,1) {};
      \node[vertex] (v6) at (1,1) {};
      \path[edge] (v4) to (v5);
      \path[edge] (v5) to (v6);
      \node[vertex] (v7) at (2,1) {};
    \end{tikzpicture}
    \captionof{figure}{A graph}
    \label{fig:graph}
  \end{minipage}%
  \begin{minipage}{0.5\textwidth}
    \centering
    \begin{tikzpicture}[x=3.5em,y=3.5em]
      \node[vertex] (v1) at (0,0) {};
      \node[vertex] (v2) at (1,0) {};
      \node[vertex] (v3) at (1,1) {};
      \node[vertex] (v4) at (2,1) {};
      \path[edge] (v1) to [out=15,in=165] (v2);
      \path[edge,dashed] (v2) to [out=-165,in=-15] (v1);
      \path[edge] (v2) to [out=105,in=255] (v3);
      \path[edge,dashed] (v3) to [out=285,in=75] (v2);
      \path[edge] (v3) to [out=15,in=165] (v4);
      \path[edge,dashed] (v4) to [out=-165,in=-15] (v3);
    \end{tikzpicture}
    \captionof{figure}{A symmetric graph}
    \label{fig:symmetric-graph}
  \end{minipage}
\end{figure}

In this paper, a ``graph'' without qualification is a directed graph, possibly
with multiple edges and self-loops (\cref{fig:graph}).\footnotemark{} Different
kinds of graphs arise as $\cat{C}$-sets for different categories $\cat{C}$.

\footnotetext{Such graphs are also called ``directed pseudographs,'' by
  graph theorists, and ``quivers,'' by representation theorists.}

\begin{example}[Symmetric graphs]
  The \emph{theory of symmetric graphs} extends the theory of graphs with an
  edge involution:
  \begin{equation*}
    \setlength{\jot}{0pt}
    \Theory{\SGraph} = \left\{
      \begin{tikzcd}
        E \lar[loop left]{\inv}
          \rar[shift left=1]{\src}
          \rar[shift right=1][below]{\tgt}
          & V
      \end{tikzcd}
      \middle|\
      {\footnotesize
       \begin{aligned}
        \inv^2 = 1_E \\
        \inv \cdot \src = \tgt \\
        \inv \cdot \tgt = \src
      \end{aligned}}
    \right\}.
  \end{equation*}
  A $\Theory{\SGraph}$-set, or \emph{symmetric graph}, is a graph $X$ endowed
  with an involution on edges, that is, a self-inverse, orientation-reversing
  edge map $X(\inv): X(E) \to X(E)$. Loosely speaking, a symmetric graph is a
  graph in which every edge has a matching edge going in the opposite direction
  (\cref{fig:symmetric-graph}). Symmetric graphs are essentially the same as
  undirected graphs.\footnote{In the absence of self-loops, symmetric graphs
    correspond one-to-one with undirected graphs, but a self-loop in an
    undirected graph has two possible representations in a symmetric graph: it
    can be fixed or not under the involution.}
\end{example}

Before giving further examples, we define the notion of homomorphism appropriate
for $\cat{C}$-sets.

\begin{definition}[$\cat{C}$-set morphisms]
  Let $\cat{C}$ be a small category and let $X$ and $Y$ be $\cat{C}$-sets. A
  \emph{morphism of $\cat{C}$-sets} from $X$ to $Y$ is a natural transformation
  $\phi: X \to Y$.

  Thus, a $\cat{C}$-set morphism $\phi: X \to Y$ assigns a function
  $\phi_c: X(c) \to Y(c)$ to each object $c \in \cat{C}$ in such a way that for
  every morphism $f: c \to c'$ in $\cat{C}$, there is a commutative diagram
  \begin{equation*}
    \begin{tikzcd}
      X(c) \rar{X(f)} \dar[][left]{\phi_c}
        & X(c') \dar{\phi_{c'}} \\
      Y(c) \rar{Y(f)}
        & Y(c').
    \end{tikzcd}
  \end{equation*}
\end{definition}

A morphism of graphs, according to this definition, is a graph homomorphism as
ordinarily understood, consisting of a vertex map $\phi_V: X(V) \to Y(V)$ and an
edge map $\phi_E: X(E) \to Y(E)$ that preserves the assignment of source and
target vertices:
\begin{equation*}
  \begin{tikzcd}
    X(E) \rar{\src} \dar[][left]{\phi_E}
      & X(V) \dar{\phi_V} \\
    Y(E) \rar{\src}
      & Y(V)
  \end{tikzcd}
  \hspace{4em}
  \begin{tikzcd}
    X(E) \rar{\tgt} \dar[][left]{\phi_E}
      & X(V) \dar{\phi_V} \\
    Y(E) \rar{\tgt}
      & Y(V).
   \end{tikzcd}
\end{equation*}
In the commutative diagrams, we adopt the convention of writing simply $f$ for
$X(f)$ or $Y(f)$ where no confusion can arise. Similarly, a morphism of
symmetric graphs is a graph homomorphism that preserves the edge involution.

The next example shows that different categories $\cat{C}$ can define
essentially the same $\cat{C}$-sets while yielding genuinely different
$\cat{C}$-set morphisms.

\begin{example}[Reflexive graphs]
  The \emph{theory of reflexive graphs} is
  \begin{equation*}
    \setlength{\jot}{0pt}
    \Theory{\RGraph} = \left\{
      \begin{tikzcd}
        E \rar[shift left=3]{\src}
          \rar[shift right=1][above]{\tgt}
          & V
          \lar[shift left=3][below]{\refl}
      \end{tikzcd}
      \middle|\
      {\footnotesize
       \begin{aligned}
        \refl \cdot \src = 1_V \\
        \refl \cdot \tgt = 1_V
      \end{aligned}}
    \right\}.
  \end{equation*}
  A \emph{reflexive graph} is a graph whose every vertex is endowed with a
  distinguished loop (\cref{fig:reflexive-graph}). As objects, reflexive graphs
  are the same as graphs, inasmuch as they are in one-to-one correspondence with
  each other. However, morphisms of reflexive graphs can ``collapse'' edges into
  vertices by mapping them onto distinguished loops, a possibility not permitted
  of a graph homomorphism. For this reason reflexive graph morphisms are
  sometimes called ``degenerate maps.''
\end{example}

\begin{figure}
  \centering
  \begin{minipage}{0.5\textwidth}
    \centering
    \begin{tikzpicture}[x=3em,y=3em,
        every loop/.style={min distance=3em,out=135,in=45,looseness=10}]
      \node[vertex] (v1) at (0,0) {};
      \node[vertex] (v2) at (1,0) {};
      \node[vertex] (v3) at (1.707,0.707) {};
      \node[vertex] (v4) at (2.414,0) {};
      \path[edge,dashed] (v1) to [loop above] (v1);
      \path[edge,dashed] (v2) to [loop above] (v2);
      \path[edge,dashed] (v3) to [loop above] (v3);
      \path[edge,dashed] (v4) to [loop above] (v4);
      \path[edge] (v1) to (v2);
      \path[edge] (v2) to (v3);
      \path[edge] (v3) to (v4);
    \end{tikzpicture}
    \captionof{figure}{A reflexive graph}
    \label{fig:reflexive-graph}
  \end{minipage}%
  \begin{minipage}{0.5\textwidth}
    \centering
    \begin{tikzpicture}[x=4em,y=1.5em]
      \node[vertex] (u1) at (0,4) {};
      \node[vertex] (u2) at (0,3) {};
      \node[vertex] (u3) at (0,2) {};
      \node[vertex] (u4) at (0,1) {};
      \node[vertex] (v1) at (1,3.5) {};
      \node[vertex] (v2) at (1,2.5) {};
      \node[vertex] (v3) at (1,1.5) {};
      \path[edge] (u1) to (v1);
      \path[edge] (u2) to (v1);
      \path[edge] (u2) to (v2);
      \path[edge] (u4) to [out=20,in=220] (v2);
      \path[edge] (u4) to [out=50,in=200] (v2);
    \end{tikzpicture}
    \captionof{figure}{A bipartite graph}
    \label{fig:bipartite-graph}
  \end{minipage}
\end{figure}

Symmetry and reflexivity combine straightforwardly in \emph{symmetric reflexive
  graphs}, in which the distinguished loops are fixed by the edge involution.
Bipartite graphs form another important class of graphs.

\begin{example}[Bipartite graphs]
  The \emph{theory of bipartite graphs} is
  \begin{equation*}
    \Theory{\BGraph} = \left\{
      \begin{tikzcd}
        U & E \lar[][above]{\src} \rar{\tgt} & V
      \end{tikzcd}
    \right\}.
  \end{equation*}
  A \emph{bipartite graph} $X$ consists of two vertex sets, $X(U)$ and $X(V)$,
  and a set $X(E)$ of edges with sources in $X(U)$ and targets in $X(V)$
  (\cref{fig:bipartite-graph}). A morphism of bipartite graphs $\phi: X \to Y$
  has two vertex maps, $\phi_U: X(U) \to Y(U)$ and $\phi_V: X(V) \to Y(V)$, and
  an edge map $\phi_E: X(E) \to Y(E)$ that preserves the source and target
  vertices.
\end{example}

We have not exhausted the list of graph-like structures that can be defined as
$\cat{C}$-sets. For example, hypergraphs, which generalize graphs by allowing
edges with multiple sources and multiple targets, are $\cat{C}$-sets
\parencite{gallo1993,spivak2009}. So are simplicial sets, the higher-dimensional
analogue of graphs and combinatorial analogue of simplicial complexes.

\begin{example}[Semi-simplicial sets]
  The \emph{semi-simplicial category}, truncated to two dimensions, is
  \begin{equation*}
    \setlength{\jot}{0pt}
    \cat{\Delta}_+^2 := \left\{
      \begin{tikzcd}
        T \rar[shift left=3]{e_0}
          \rar[shift right=1]{e_1}
          \rar[shift right=3][below]{e_2}
         & E \rar[shift left=1]{v_0}
             \rar[shift right=1][below]{v_1}
         & V
      \end{tikzcd}
      \middle|\
      {\footnotesize
       \begin{aligned}
         e_1 v_0 = e_0 v_0 \\
         e_2 v_0 = e_0 v_1 \\
         e_2 v_1 = e_1 v_1
       \end{aligned}}
    \right\}.
  \end{equation*}
  A $\cat{\Delta}_+^2$-set, or \emph{two-dimensional semi-simplicial set}, is a
  collection of triangles, edges, and vertices. Each triangle has three edges,
  in a definite order, and each edge has two vertices, also in a definite order,
  in such a way that the induced assignment of vertices to triangles is
  consistent, according to the simplicial identities
  (\cref{fig:semi-simplicial-set}).

  Semi-simplicial sets up to any dimension $n$, or in all dimensions $n$, can be
  defined as $\cat{C}$-sets, as can several other kinds of simplicial sets
  \parencite{friedman2012,grandis2001,spivak2009}. We will not present the
  simplicial categories here, but we summarize the idea that graphs are
  one-dimensional simplicial sets in the table below.
  \begin{center}
    \vspace{\baselineskip}
    \begin{tabular}{ll}
      \toprule
      1-dimensional & $n$-dimensional \\
      \midrule
      graphs & semi-simplicial sets \\
      reflexive graphs & simplicial sets \\
      symmetric graphs & symmetric semi-simplicial sets \\
      symmetric reflexive graphs & symmetric simplicial sets \\
      \bottomrule
    \end{tabular}
    \vspace{\baselineskip}
  \end{center}
\end{example}

\begin{figure}
  \centering
  \begin{tikzcd}[x=6em,y=6em]
    \node[vertex] (u0) at (0,0) {};
    \node[vertex] (u1) at (0,0.707) {};
    
    \coordinate (v0) at (1,0);
    \coordinate (v1) at (1.5,0.707);
    \coordinate (v2) at (2,0);
    \coordinate (v3) at (2.5,0.707);
    \fill[fill=gray!20] (v0) -- (v1) -- (v2) -- cycle;
    \fill[fill=gray!20] (v1) -- (v2) -- (v3) -- cycle;
    \draw (v0) node[vertex] {};
    \draw (v1) node[vertex] {};
    \draw (v2) node[vertex] {};
    \draw (v3) node[vertex] {};
    \path[edge] (u0) to (v0);
    
    \path[edge] (v1) to node[right,font=\footnotesize] {2} (v0);
    \path[edge] (v2) to node[left,font=\footnotesize] {0} (v1);
    \path[edge] (v2) to node[above,font=\footnotesize] {1} (v0);
    
    \path[edge] (v2) to node[right,font=\footnotesize] {2} (v1);
    \path[edge] (v3) to node[left,font=\footnotesize] {0} (v2);
    \path[edge] (v3) to node[below,font=\footnotesize] {1} (v1);
  \end{tikzcd}
  \caption{A two-dimensional semi-simplicial set}
  \label{fig:semi-simplicial-set}
\end{figure}
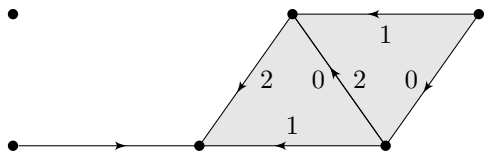

We take this list of examples to establish that many graph-like structures can
be represented as $\cat{C}$-sets. The next example is rather trivial, but later
we use its attributed variant to recover the classical Hausdorff and Wasserstein
distances as special cases of metrics on $\cat{C}$-sets.

\begin{example}[Sets] \label{ex:sets}
  If $\CAT{1} = \{*\}$ is the discrete category on one object, then
  $\CAT{1}$-sets are sets and morphisms of $\CAT{1}$-sets are functions.
\end{example}

\begin{example}[Bisets] \label{ex:bisets} If $\CAT{2}$ is the discrete category
  on two objects, then $\CAT{2}$-sets are pairs of sets and morphisms of
  $\CAT{2}$-sets are pairs of functions.
\end{example}

\begin{example}[Dynamical systems] \label{ex:dds}
  The \emph{theory of discrete dynamical systems} is
  \begin{equation*}
    \Theory{\CAT{DDS}} = \left\{
      \begin{tikzcd}
        * \rar[loop right]{T}
      \end{tikzcd}
    \right\}.
  \end{equation*}
  A \emph{discrete dynamical system} is a set $X = X(*)$ together with a
  function $T: X \to X$. The set $X$ is the state space of the system and the
  transformation $T$ defines the transitions between states.
\end{example}

In applications, graphs and other structures often bear additional data in the
form of discrete labels or continuous measurements. Data attributes are easily
attached to $\cat{C}$-sets by extending the theory $\cat{C}$.

\begin{example}[Attributed sets] \label{ex:attributed-sets}
  The \emph{theory of attributed sets} is
  \begin{equation*}
    \Theory{\CAT{ASet}} = \left\{
      \begin{tikzcd}
        * \rar{\attr} & A
      \end{tikzcd}
    \right\}.
  \end{equation*}
  An \emph{attributed set} is thus a set $X = X(*)$ equipped with an map
  $X \xrightarrow{\attr} X(A)$ that assigns to each element $x \in X$ an
  attribute value $\attr(x)$.
\end{example}

\begin{example}[Vertex-attributed graphs] \label{ex:attributed-graphs}
  The \emph{theory of vertex-attributed graphs} is
  \begin{equation*}
    \Theory{\CAT{VGraph}} = \left\{
      \begin{tikzcd}
        E \rar[shift left=1]{\src}
          \rar[shift right=1][below]{\tgt}
          & V \rar{\attr}
          & A
      \end{tikzcd}
    \right\}.
  \end{equation*}
  A \emph{vertex-attributed graph} is a graph $X$ equipped with a map
  $X(V) \xrightarrow{\attr} X(A)$ that assigns an attribute to each vertex. Such
  graphs are usually called ``vertex-labeled'' when the attribute set $X(A)$ is
  discrete.
\end{example}

Edge-attributed, or vertex- and edge-attributed, graphs can be defined
similarly. Indeed, any number of attributes can be attached to any substructure
of a $\cat{C}$-set, making the class of $\cat{C}$-sets closed under attachment
of extra data.

Often all the $\cat{C}$-sets under consideration take attributes in a common
space $\Attr$, such as a fixed set of labels or a Euclidean space. In this case,
we restrict the $\cat{C}$-set morphisms $\phi: X \to Y$ to those whose attribute
maps $X(A) = \Attr \xrightarrow{\phi_A} \Attr = Y(A)$ are the identity
$1_{\Attr}$. We will note when this restriction is in force by describing the
attribute space as \emph{fixed}.

At this juncture, the reader may wonder what is gained by the formalism of
$\cat{C}$-sets over, say, an equational fragment of first-order logic or even
ordinary, informal mathematics. This question has many valid answers, but the
most pertinent is that viewing theories as categories in their own right makes
it extremely simple to define models with extra structure, be it topological,
metric, measure-theoretic, or otherwise. We simply replace the category $\Set$
of sets and functions with a category $\cat{S}$ having that extra structure.

\begin{definition}[Functorial semantics]
  Let $\cat{C}$ be a small category and let $\cat{S}$ be any category. The
  \emph{functor category} $[\cat{C},\cat{S}]$ has functors $\cat{C} \to \cat{S}$
  as objects and natural transformations between them as morphisms. We call the
  objects of this category \emph{$\cat{S}$-valued $\cat{C}$-sets} or
  \emph{$\cat{C}$-sets in $\cat{S}$}.
\end{definition}

Functorial semantics goes back to Lawvere's pioneering thesis
\parencite{lawvere1963}. When $\cat{S} = \Set$, we recover the original
definitions of $\cat{C}$-sets and $\cat{C}$-set morphisms. So, in our main
example, the category of graphs and graph homomorphisms is the category of
functors $\Theory{\Graph} \to \Set$, that is,
\begin{equation*}
  \Graph \cong [\Theory{\Graph},\Set].
\end{equation*}
The starting point for much subsequent development is the category $\Meas$ of
measurable spaces and measurable functions, defined more carefully below. A
\emph{measurable $\cat{C}$-set}, or $\cat{C}$-set in $\Meas$, is a $\cat{C}$-set
$X$ whose internal sets $X(c)$ are equipped with $\sigma$-algebras and whose
internal maps $X(f)$ are measurable with respect to these $\sigma$-algebras. We
will introduce other categories as we need them. Throughout the paper we explore
the consequences of enriching graphs and other $\cat{C}$-sets with metrics,
measures, and Markov morphisms.

\section{Markov morphisms of measurable \texorpdfstring{$\cat{C}$}{C}-spaces}
\label{sec:markov}

On the topic of matching of $\cat{C}$-sets, the seemingly most elementary
question one can ask is:

\begin{problem}[$\cat{C}$-set homomorphism]
  Given $\cat{C}$-sets $X$ and $Y$, does there exist a $\cat{C}$-set morphism
  $\phi: X \to Y$?
\end{problem}

For $\cat{C} = \CAT{1}$, the problem is trivial. A function $\phi: X \to Y$
exists if and only if the codomain $Y$ is nonempty. But for other categories
$\cat{C}$ the problem is computationally hard. The graph homomorphism problem,
occurring when $\cat{C} = \Theory{\Graph}$, is a famous NP-complete
problem.\footnote{The graph homomorphism problem usually refers to undirected
  graphs, but it is no easier for directed graphs \parencite[Proposition
  5.10]{hell2004}.} In the case of reflexive graphs, the homomorphism problem
becomes trivial again, because there is a reflexive graph morphism $X \to Y$ if
and only if codomain $Y$ is nonempty (contains a vertex). Yet the same cannot be
said of similar matching problems, such as:

\begin{problem}[$\cat{C}$-set isomorphism]
  Given $\cat{C}$-sets $X$ and $Y$, does there exist a $\cat{C}$-set isomorphism
  $X \to Y$?
\end{problem}

The isomorphism problem for reflexive graphs is equivalent to the graph
isomorphism problem, so is once again computationally hard. In summary, while
the complexity depends on the category $\cat{C}$ and on the specific
$\cat{C}$-sets under consideration, it is generally computationally intractable
to find $\cat{C}$-set morphisms, to say nothing of enumerating them or
optimizing over them.

A popular strategy for solving hard combinatorial problems, especially when
inexact solutions are acceptable, is to relax the problem to a continuous one
that is easier to solve. Functorial semantics offer a simple way to implement
this strategy: replace the category $\Set$ with a category having better
computational properties. In what will be a recurring theme, we replace
categories of functions with categories of Markov kernels, which are the
probabilistic analogue of functions. The reader unfamiliar with Markov kernels
will find references and a short review in \cref{app:markov}.

\begin{definition}[Category of Markov kernels]
  The category $\Markov$ has Polish measurable spaces\footnotemark{} as objects
  and Markov kernels between them as morphisms.
\end{definition}

\footnotetext{In other words, the objects are topological spaces homeomorphic to
  a complete, separable metric space, equipped with their Borel
  $\sigma$-algebras. Many results hold under weaker or no assumptions on the
  measurable spaces, but for simplicity we assume this regularity condition
  everywhere.}

\begin{example}[Markov chains] \label{ex:markov-chains}
  A discrete dynamical system in $\Markov$ is a \emph{Markov chain}. Morphisms
  of Markov chains, as stipulated by this definition, are known to probabilists
  as \emph{intertwinings} \parencite{yor1988,diaconis1990}.
\end{example}

Functions are Markov kernels that contain no randomness. To be more formal, a
Markov kernel $M: X \to Y$ is \emph{deterministic} if for every $x \in X$, the
distribution $M(x)$ is a Dirac delta measure. Given any measurable function
$f: X \to Y$, a deterministic Markov kernel $\MarkovFun(f): X \to Y$ is defined
by $\MarkovFun(f)(x) := \delta_{f(x)}$, and every deterministic Markov kernel
arises uniquely in this way. Measurable functions can therefore be identified
with deterministic Markov kernels. Moreover, the identification is functorial.
Given composable measurable functions $X \xrightarrow{f} Y \xrightarrow{g} Z$,
one easily checks that
$\MarkovFun(f \cdot g) = \MarkovFun f \cdot \MarkovFun g$. Also,
$\MarkovFun(1_X) = 1_X$. We summarize these statements by saying that
$\MarkovFun: \Meas \to \Markov$ is an identity-on-objects embedding functor. In
what follows we will not always distinguish notationally between a function $f$
and its corresponding Markov kernel $\MarkovFun(f)$.

Let us now make precise the relaxation of the $\cat{C}$-set homomorphism
problem. Strictly speaking, the relaxation is not from $\cat{C}$-sets, but from
measurable $\cat{C}$-sets.

\begin{definition}[Measurable $\cat{C}$-spaces]
  The category $\Meas$ has Polish measurable spaces as objects and measurable
  functions as morphisms.

  A \emph{measurable $\cat{C}$-space} is a $\cat{C}$-set in $\Meas$.
\end{definition}

The sought-after relaxation is a nearly immediate consequence of the embedding
$\Meas$ in $\Markov$.

\begin{definition}[Markov morphisms]
  A \emph{Markov morphism} $\Phi: X \to Y$ of measurable $\cat{C}$-spaces $X$
  and $Y$ consists of a Markov kernel $\Phi_c: X(c) \to Y(c)$ for each object
  $c \in \cat{C}$, such that for every morphism $f: c \to c'$ in $\cat{C}$, the
  diagram
  \begin{equation*}
    \begin{tikzcd}[column sep=5em]
      X(c) \rar{\MarkovFun(X(f))} \dar[][left]{\Phi_c}
        & X(c') \dar{\Phi_{c'}} \\
      Y(c) \rar{\MarkovFun(Y(f))}
        & Y(c')
    \end{tikzcd}
  \end{equation*}
  in $\Markov$ commutes.
\end{definition}

\begin{proposition}[Relaxation of $\cat{C}$-set homomorphism]
    \label{prop:markov-relaxation}
  Given measurable $\cat{C}$-spaces $X$ and $Y$, the problem of finding a Markov
  morphism $\Phi: X \to Y$ is a convex relaxation of the problem of finding a
  (measurable) morphism $\phi: X \to Y$.
\end{proposition}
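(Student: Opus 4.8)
The plan is to unpack the phrase \emph{convex relaxation} into its two constituent claims and check each in turn: that the feasible set of the relaxed problem, namely the set of Markov morphisms $X \to Y$, is convex; and that every feasible point of the original problem is carried, by a canonical injection, to a feasible point of the relaxed one. The canonical injection will of course be componentwise application of the embedding $\MarkovFun \colon \Meas \to \Markov$.

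First I would handle the injection. If $\phi \colon X \to Y$ is a measurable morphism with components $\phi_c \colon X(c) \to Y(c)$, then functoriality of $\MarkovFun$ sends each commuting naturality square of $\phi$ in $\Meas$ to a commuting square in $\Markov$, so the tuple $(\MarkovFun(\phi_c))_{c \in \cat{C}}$ is a Markov morphism $X \to Y$. Because $\MarkovFun$ is faithful — deterministic Markov kernels correspond bijectively to measurable functions — the assignment $\phi \mapsto (\MarkovFun(\phi_c))_c$ is injective. Running faithfulness in the reverse direction, I would also observe that its image is precisely the set of \emph{deterministic} Markov morphisms, those all of whose components are deterministic kernels; thus the relaxation is exactly the one obtained by discarding the determinism constraint, which is what makes ``relaxation'' the apt word rather than mere inclusion of feasible sets.

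The real content is the convexity claim, and the steps there are as follows. (i) The set of Markov kernels $X(c) \to Y(c)$ is convex: the pointwise convex combination $t\, M(x) + (1-t)\, N(x)$ of probability measures again defines a Markov kernel, so a convex combination of kernels is a kernel; hence the product of these sets over $c \in \cat{C}$, in which the tuples $(\Phi_c)_c$ live, is convex. (ii) Composition of Markov kernels is affine in each argument, by linearity of the integral: pre- or post-composing a variable kernel with a fixed one carries convex combinations to convex combinations, so in particular composing with the fixed kernels $\MarkovFun(X(f))$ and $\MarkovFun(Y(f))$ is affine. (iii) Therefore, for each morphism $f \colon c \to c'$ of $\cat{C}$, the naturality equation $\MarkovFun(X(f)) \cdot \Phi_{c'} = \Phi_c \cdot \MarkovFun(Y(f))$ equates two affine functions of the tuple $(\Phi_c)_c$ and so defines an affine subset of the tuple space. (iv) The set of Markov morphisms $X \to Y$ is the intersection of the convex product set of (i) with the affine subsets of (iii), one for each $f$, and an arbitrary intersection of convex sets is convex.

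I do not anticipate a genuine obstacle: the proposition is little more than a repackaging of the functoriality of $\MarkovFun$ together with the bilinearity of Markov-kernel composition. The one point I would be careful about is that for general Polish spaces the tuples of kernels inhabit an infinite-dimensional affine space; but convexity is a purely algebraic notion and needs no topology, so the argument above is unaffected, and in any event it suffices to impose the naturality condition on a finite generating set of morphisms of the finitely presented category $\cat{C}$. In the special case where every $X(c)$ and $Y(c)$ is finite, the kernels are row-stochastic matrices and the feasible set is literally a polytope cut out by linear equalities — a linear-programming feasibility problem, foreshadowing the later sections.
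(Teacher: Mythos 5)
Your proposal is correct and follows essentially the same route as the paper: the relaxation claim via functoriality of $\MarkovFun$ preserving naturality squares, and the convexity claim via convexity of the set of kernels together with (bi)linearity of kernel composition making the naturality constraints affine. Your added observations (injectivity of $\phi \mapsto \MarkovFun(\phi)$ and the identification of its image with the deterministic Markov morphisms) are sound refinements the paper leaves implicit.
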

\begin{proof}
  The proposition makes two assertions, concerning relaxation and convexity.

  To establish the relaxation, observe that if $\phi: X \to Y$ is a measurable
  morphism, then
  $\MarkovFun(\phi) := (\MarkovFun(\phi_c))_{c \in \cat{C}}: X \to Y$ is a
  Markov morphism, because, by functoriality, the embedding
  $\MarkovFun: \Meas \to \Markov$ preserves naturality squares:
  \begin{equation*}
    \begin{tikzcd}
      X(c) \rar{Xf} \dar[][left]{\phi_c}
        & X(c') \dar{\phi_{c'}} \\
      Y(c) \rar{Yf}
        & Y(c')
    \end{tikzcd}
    \qquad\leadsto\qquad
    \begin{tikzcd}[column sep=5em]
      X(c) \rar{\MarkovFun(Xf)} \dar[][left]{\MarkovFun(\phi_c)}
        & X(c') \dar{\MarkovFun(\phi_{c'})} \\
      Y(c) \rar{\MarkovFun(Yf)}
        & Y(c')
    \end{tikzcd}.
  \end{equation*}
  Thus, if the measurable morphism problem has a solution, so does the Markov
  morphism problem. To state the argument more pithily, the functor
  $\MarkovFun: \Meas \to \Meas$ induces a functor
  $\MarkovFun_*: [\cat{C},\Meas] \to [\cat{C},\Markov]$ by post-composition.

  As for the convexity, the Markov morphism problem,
  \begin{equation*}
    \begin{aligned}
      \text{find}\quad
        & \Phi_c: X(c) \to Y(c),\ c \in \cat{C} \\
      \text{s.t.}\quad
        & Xf \cdot \Phi_{c'} = \Phi_c \cdot Yf,\quad
          \forall f: c \to c' \text{ in } \cat{C},
    \end{aligned}
  \end{equation*}
  is a convex feasibility problem, possibly in infinite dimensions. The
  variables, namely Markov kernels $\Phi_c: X(c) \to Y(c)$ indexed by
  $c \in \cat{C}$, form a convex space, and the constraints are linear in the
  variables.
\end{proof}

One way to think about this result is that the constraints defining a
$\cat{C}$-set morphism, which are only \emph{formally} linear, become
\emph{actually} linear upon relaxation. In the case of greatest practical
interest, when the $\cat{C}$-sets are finite, the result is a linear program.

To make this as transparent as possible, let us write out the linear program.
Given finite $\cat{C}$-sets $X$ and $Y$, identify a function
$X(f): X(c) \to X(c')$ with a binary matrix
$X(f) \in \{0,1\}^{|X(c)| \times |X(c')|}$ whose rows sum to 1 and identify a
Markov kernel $\Phi_c: X(c) \to Y(c)$ with a right stochastic matrix
$\Phi_c \in \R^{|X(c)| \times |Y(c)|}$. The Markov morphism problem is then
\begin{equation*}
  \begin{aligned}
    \text{find}\quad
      & \Phi_c \in \R^{|X(c)| \times |Y(c)|},\ c \in \cat{C} \\
    \text{s.t.}\quad
      & \Phi_c \geq 0,\ \Phi_c \cdot \ones = \ones, \quad
        \forall c \in \cat{C} \\
      & Xf \cdot \Phi_{c'} = \Phi_c \cdot Yf, \quad
        \forall f: c \to c',
  \end{aligned}
\end{equation*}
where $\cdot$ denotes the usual matrix multiplication and $\ones$ denotes the
column vector of all 1's (whose dimensionality is left implicit in the
notation). This feasibility problem is a linear program with linear equality
constraints and nonnegativity constraints.\footnotemark

\footnotetext{The category $\cat{C}$ may contain infinitely many morphisms, but
  it suffices to enforce naturality on a generating set of morphisms. Thus,
  assuming $\cat{C}$ is finitely presented, we can always write the linear
  program with finitely many constraints.}

It will be helpful to see how Markov morphisms behave in a concrete situation.

\begin{figure}
  \captionsetup{width=0.4\textwidth}
  \centering
  \begin{minipage}{0.5\textwidth}
    \centering
    \begin{tikzpicture}
      \node (X) at (0,-0.5) {$X$};
      \node[vertex] (u1) at (0,0) {};
      \node[vertex] (u2) at (0,1) {};
      \node[vertex] (u3) at (0,2) {};
      \draw[edge] (u1) -> (u2);
      \draw[edge] (u2) -> (u3);

      \node (Y) at (3,-0.5) {$Y$};
      \node[vertex] (v1) at (3,0) {};
      \node[vertex] (v2) at (2,1) {};
      \node[vertex] (v3) at (4,1) {};
      \node[vertex] (v4) at (3,2) {};
      \draw[edge] (v1) -> (v2);
      \draw[edge] (v1) -> (v3);
      \draw[edge] (v2) -> (v4);
      \draw[edge] (v3) -> (v4);
    \end{tikzpicture}
    \captionof{figure}{Graphs whose Markov morphisms are mixtures of graph
      homomorphisms}
    \label{fig:graph-markov-mixture}
  \end{minipage}%
  \begin{minipage}{0.5\textwidth}
    \centering
    \begin{tikzpicture}
      \node (X) at (0,-0.5) {$X$};
      \node[vertex] (u) at (0,0) {};
      \path[every loop/.style={min distance=6em,in=150,out=30,looseness=10}]
        (u) edge [edge,loop above] (u);

      \node (Y) at (3,-0.5) {$Y$};
      \node[vertex] (v1) at ($(3,1)+(30:1)$) {};
      \node[vertex] (v2) at ($(3,1)+(150:1)$) {};
      \node[vertex] (v3) at ($(3,1)+(270:1)$) {};
      \path[every edge/.style={edge}]
        (v2) edge[bend left=60] (v1)
        (v2) edge[bend right=60] (v3)
        (v3) edge[bend right=60] (v1);
    \end{tikzpicture}
    \captionof{figure}{Graphs with no homomorphisms or Markov morphisms}
    \label{fig:graph-markov-cycle}
  \end{minipage}
\end{figure}

\begin{example}[Markov morphisms of graphs] \label{ex:markov-graph-morphism}
  Let $X$ and $Y$ be finite graphs. A Markov morphism $\Phi: X \to Y$ is a
  Markov kernel on vertices, $\Phi_V: X(V) \to Y(V)$, and a Markov kernel on
  edges, $\Phi_E: X(E) \to Y(E)$, such that the two diagrams
  \begin{equation*}
    \begin{tikzcd}
      X(E) \rar{\src} \dar[][left]{\Phi_E}
        & X(V) \dar{\Phi_V} \\
      Y(E) \rar{\src}
        & Y(V)
    \end{tikzcd}
    \hspace{4em}
    \begin{tikzcd}
      X(E) \rar{\tgt} \dar[][left]{\Phi_E}
        & X(V) \dar{\Phi_V} \\
      Y(E) \rar{\tgt}
        & Y(V)
    \end{tikzcd}
  \end{equation*}
  in $\Markov$ commute. Since the vertex and edge maps are nondeterministic, it
  does not make sense to ask that the source and target vertices be preserved
  exactly, as in a graph homomorphism. The naturality squares assert the next
  best thing, that for every edge $e$ in $X(E)$, the distribution of $e$'s
  source vertex under $\Phi_V$ is equal to that of the source vertices in the
  edge distribution of $e$ under $\Phi_E$, and similarly for target vertices.

  We bring out the difference between graph homomorphisms and Markov morphisms
  in a series of examples. Between the graphs $X$ and $Y$ of
  \cref{fig:graph-markov-mixture}, there are two graph homomorphisms
  $\phi_1, \phi_2: X \to Y$, corresponding to the two directed paths in $Y$.
  Both are, of course, Markov morphisms, as is any mixture
  $\Phi = t \phi_1 + (1-t) \phi_2$, where $t \in [0,1]$. In this case, every
  Markov morphism is a mixture of graph homomorphisms, so little is lost (or
  gained) by the relaxation. \cref{fig:graph-markov-cycle} presents a similar
  picture. The graph $X$ is a loop and the graph $Y$ is a cycle, though not a
  directed one. There are no Markov graph morphisms from $X$ to $Y$,
  deterministic or otherwise.

  \cref{fig:graph-markov-directed-cycle} looks superficially similar, with the
  graph $Y$ now a directed cycle, but the outcome is more interesting. As
  before, there is no graph homomorphism from $X$ to $Y$, but there is a Markov
  morphism. In fact, if $C_n$ is the directed cycle of length $n$, then for any
  $m,n \geq 1$, a Markov morphism $\Phi: C_m \to C_n$ is given by assigning the
  uniform distributions on all vertices and edges:
  \begin{equation*}
    \Phi_V(v) \sim \Unif(C_n(V)),\ v \in C_m(V), \qquad
    \Phi_E(e) \sim \Unif(C_n(E)),\ e \in C_m(E).
  \end{equation*}
  In particular, it is sometimes possible to find a Markov graph morphism that
  is not a mixture of deterministic morphisms, proving that the notion of Markov
  homomorphism is genuinely weaker than graph homomorphism. That should not be
  surprising, given that the graph homomorphism problem is NP-hard, while the
  Markov graph morphism problem is a linear program, hence solvable in
  polynomial time.

  Finally, \cref{fig:graph-markov-terminal} shows the terminal graph for both
  deterministic and Markov morphisms. Any graph $X$ has a unique graph
  homomorphism, indeed a unique Markov morphism, into the loop $Y$.
\end{example}

\begin{figure}
  \captionsetup{width=0.4\textwidth}
  \centering
  \begin{minipage}{0.5\textwidth}
    \centering
    \begin{tikzpicture}
      \node (X) at (0,-0.5) {$X$};
      \node[vertex] (u) at (0,0) {};
      \path[every loop/.style={min distance=6em,in=150,out=30,looseness=10}]
        (u) edge [edge,loop above] (u);

      \node (Y) at (3,-0.5) {$Y$};
      \node[vertex] (v1) at ($(3,1)+(30:1)$) {};
      \node[vertex] (v2) at ($(3,1)+(150:1)$) {};
      \node[vertex] (v3) at ($(3,1)+(270:1)$) {};
      \path[every edge/.style={edge,bend right=60}]
        (v1) edge (v2)
        (v2) edge (v3)
        (v3) edge (v1);
    \end{tikzpicture}
    \captionof{figure}{Graphs with a Markov morphism, but no graph
      homomorphisms}
    \label{fig:graph-markov-directed-cycle}
  \end{minipage}%
  \begin{minipage}{0.5\textwidth}
    \centering
    \begin{tikzpicture}
      \node (Y) at (0,-0.5) {$Y$};
      \node[vertex] (u) at (0,0) {};
      \path[every loop/.style={min distance=6em,in=150,out=30,looseness=10}]
        (u) edge [edge,loop above] (u);
    \end{tikzpicture}
    \captionof{figure}{The terminal graph, for both homomorphisms and Markov
      morphisms}
    \label{fig:graph-markov-terminal}
  \end{minipage}
\end{figure}

One might suppose that the strategy for relaxing $\cat{C}$-set homomorphism
carries over directly to $\cat{C}$-set isomorphism, but that is not so. The
constraints imposed by isomorphism are bilinear, not linear, so convexity would
be lost in a direct translation. The problem is not just computational, though,
as the following result shows.

\begin{proposition}[{Isomorphism in $\Markov$ \parencite{belavkin2013}}]
  All isomorphisms in $\Markov$ are deterministic. That is, any Markov kernels
  $M: X \to Y$ and $N: Y \to X$ between Polish measurable spaces satisfying
  $M \cdot N = 1_X$ and $N \cdot M = 1_Y$ have the form $M = \MarkovFun(f)$ and
  $N = \MarkovFun(g)$ for measurable functions $f: X \to Y$ and $g: Y \to X$.
\end{proposition}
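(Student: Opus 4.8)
The key is that the identity kernel $1_X$ is deterministic in a very rigid way, and we can detect determinism by pushing forward distributions and checking whether mixing occurs. The plan is to fix a point $x \in X$ and look at the distribution $M(x)$ on $Y$, then apply $N$ and land back at $\delta_x$. In kernel terms, the composite $M \cdot N$ sends $x$ to the distribution $\int_Y N(y)\,dM(x)(y)$, and this must equal $\delta_x$. A Dirac measure is an extreme point of the space of probability measures on $X$ (it cannot be written nontrivially as a mixture, since for any event $B$ the value $\delta_x(B)$ is $0$ or $1$), so the only way the average $\int_Y N(y)\,dM(x)(y)$ can equal $\delta_x$ is if $N(y) = \delta_x$ for $M(x)$-almost every $y$. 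Thus $N$ is concentrated on determinism over the support of $M(x)$.

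Next I would argue that $M$ itself is deterministic. Suppose for contradiction that $M(x)$ is not a Dirac measure for some $x$; then its support contains at least two distinct points, or more carefully, there is a measurable set $B \subseteq Y$ with $0 < M(x)(B) < 1$. From the previous paragraph, for $M(x)$-a.e.\ $y$ we have $N(y) = \delta_x$. But then applying $N \cdot M = 1_Y$ at such a $y$ gives $\int_X M(x')\,dN(y)(x') = M(x) = \delta_y$ — wait, that is the wrong direction; more precisely $(N \cdot M)(y) = \int_X M(x')\,d(\delta_x)(x') = M(x)$, and this must equal $\delta_y$. So $M(x) = \delta_y$ for $M(x)$-a.e.\ $y$, which forces $M(x)$ to be a single Dirac mass, contradicting the assumption. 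Hence $M(x)$ is a Dirac measure for every $x$, i.e.\ $M$ is deterministic; by the same argument with the roles of $X,Y,M,N$ swapped, $N$ is deterministic too. Finally, by the correspondence between deterministic kernels and measurable functions recorded earlier in the excerpt (every deterministic Markov kernel is uniquely $\MarkovFun(f)$ for a measurable $f$), we obtain measurable $f : X \to Y$ and $g : Y \to X$ with $M = \MarkovFun(f)$ and $N = \MarkovFun(g)$.

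The main obstacle is measure-theoretic bookkeeping: the ``almost every'' quantifiers need to be handled with care, since a priori $M(x) = \delta_y$ holding for $M(x)$-a.e.\ $y$ does not instantly mean $M(x)$ is Dirac unless one notes that the set of $y$ with $\delta_y = M(x)$ is either empty or a single point (it is $\{y_0\}$ when $M(x) = \delta_{y_0}$, empty otherwise), so an atomless or multi-point part of $M(x)$ would have to be an event of positive measure on which the identity fails. This is where the Polish (standard Borel) hypothesis is convenient, since it guarantees that diagonals are measurable and that regular conditional structures behave well; one should cite or reprove the fact that $\{(x,\mu) : \mu = \delta_x\}$ is measurable, or simply invoke \parencite{belavkin2013} for the clean statement. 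The rest — the extreme-point characterization of Dirac measures and the functoriality of $\MarkovFun$ — is routine given what precedes.
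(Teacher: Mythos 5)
Your proof is correct, but it takes a genuinely different route from the paper's. The paper argues abstractly: since $M$ and $N$ induce mutually inverse affine maps $\ProbSpace(X) \to \ProbSpace(Y)$ and $\ProbSpace(Y) \to \ProbSpace(X)$, the map $M$ preserves extreme points, and the extreme points of $\ProbSpace(X)$ on a Polish space are exactly the point masses (citing a standard reference); hence $\delta_x M$ is a Dirac measure for every $x$. You instead work pointwise and use the two identities separately: from $M \cdot N = 1_X$ you extract $N(y) = \delta_x$ for $M(x)$-a.e.\ $y$, and then feed this into $N \cdot M = 1_Y$ to force $M(x) = \delta_y$ on a set of full $M(x)$-measure, which can only happen if $M(x)$ is itself a Dirac mass. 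Two small remarks. First, your ``mixture equal to an extreme point'' step is cleanest if you avoid the general barycenter principle entirely: evaluate at the singleton $B = \{x\}$ (measurable since $X$ is Polish) to get $\int_Y N(\{x\} \mid y)\, M(dy \mid x) = \delta_x(\{x\}) = 1$ with integrand bounded by $1$, whence $N(\{x\} \mid y) = 1$ for $M(x)$-a.e.\ $y$. Second, the measurability of $\{(x,\mu) : \mu = \delta_x\}$ that you worry about is not actually needed: the full-measure set produced in your first step is measurable by construction and is contained in $\{y : M(x) = \delta_y\}$, which is empty or a singleton, and that containment alone forces $M(x)$ to be a point mass. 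Your argument is longer but more elementary and self-contained than the paper's, needing only that singletons are measurable rather than the classification of extreme points of $\ProbSpace(X)$ and the preservation of extreme points under affine isomorphisms; the paper's version is shorter and makes transparent that only the bijectivity of the induced affine map matters.
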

\begin{proof}
  Under the given assumptions, the extreme points of the convex set
  $\ProbSpace(X)$ of probability measures on $X$ are exactly the point masses
  \parencite[Example 8.16]{simon2011}. If $M: X \to Y$ is an isomorphism in
  $\Markov$, then it acts as a linear isomorphism on $\ProbSpace(X)$ and hence
  preserves the extreme points of $\ProbSpace(X)$. Thus, for every $x \in X$,
  there exists $y \in Y$ such that $M(x) = \delta_x M = \delta_y$, which proves
  that $M$ is deterministic.
\end{proof}

As there is nothing to be gained, computationally or mathematically, by looking
at the isomorphisms in $\Markov$, we will formulate the isomorphism problem in a
different way. Let $X$ and $Y$ be finite $\cat{C}$-sets and equip every set
$X(c)$ and $Y(c)$ with the counting measure. A $\cat{C}$-set morphism
$\phi: X \to Y$ is an isomorphism if and only if each component
$\phi_c: X(c) \to Y(c)$ is an isomorphism of sets (a bijection), and this
happens if and only if each component $\phi_c$ preserves the counting measure,
that is, for every subset $B$ of $Y(c)$, the set $B$ and its preimage
$\phi_c^{-1}(B)$ are of the same size. With this motivation, we define:

\begin{definition}[Measure $\cat{C}$-spaces]
  The category $\Meas_*$ has $\sigma$-finite measures on Polish measurable
  spaces as objects and measurable maps as morphisms. The category $\Markov_*$
  has the same objects and Markov kernels as morphisms.

  A \emph{measure $\cat{C}$-space} is a $\cat{C}$-set in $\Meas_*$.
\end{definition}

Recall that a measurable map $f: (X,\mu_X) \to (Y,\mu_Y)$ of measure spaces is
\emph{measure-preserving} if
\begin{equation*}
  \mu_X f := \mu_x \circ f^{-1} = \mu_Y.
\end{equation*}
Similarly, a Markov kernel $M: (X,\mu_X) \to (Y,\mu_Y)$ between measure spaces
is \emph{measure-preserving} if $\mu_X M = \mu_Y$, as expressed in the operator
notation of \cref{def:markov-operator}. When the measure spaces coincide, that
is, $X = Y$ and $\mu_X = \mu_Y$, the measure $\mu_X$ is also called an
\emph{invariant measure} of $f$ or $M$. Let $\Fix(\Meas_*)$ and
$\Fix(\Markov_*)$ denote the subcategories of $\Meas_*$ and $\Markov_*$ whose
morphisms preserve measure.

\begin{example}[Invariant dynamics] \label{ex:invariant-dynamics}
  A discrete dynamical system in $\Fix(\Meas_*)$ is a \emph{measure-preserving
    dynamical system}, the basic object of study in ergodic theory. A discrete
  dynamical system in $\Fix(\Markov_*)$ is a Markov chain together with an
  invariant measure.
\end{example}

\begin{problem}[Measure-preserving $\cat{C}$-space homomorphism]
  Given measure $\cat{C}$-spaces $X$ and $Y$, does there exist a
  measure-preserving morphism $X \to Y$, i.e., a morphism $\phi: X \to Y$ whose
  components $\phi_c: X(c) \to Y(c)$ are all measure-preserving?
\end{problem}

Due to the motivating case of finite sets and counting measures, the problem of
measure $\cat{C}$-space homomorphism is no easier to solve than $\cat{C}$-set
isomorphism; however, its relaxation to measure-preserving Markov morphism
\emph{is} easier to solve, being a convex feasibility problem.

\begin{proposition}[Relaxation of measure-preserving $\cat{C}$-set morphism]
  Given measure $\cat{C}$-spaces $X$ and $Y$, the problem of finding a
  measure-preserving Markov morphism $\Phi: X \to Y$ is a convex relaxation of
  the problem of finding a measure-preserving morphism $\phi: X \to Y$.
\end{proposition}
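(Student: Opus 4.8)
The plan is to mimic the proof of \cref{prop:markov-relaxation} verbatim, adding only the observation that measure-preservation is itself a linear constraint and is inherited under the embedding $\MarkovFun$. Concretely, the statement again asserts two things: that the Markov problem is a genuine relaxation (solvability of the original implies solvability of the relaxed one), and that the relaxed problem is convex.

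First I would establish the relaxation half. Suppose $\phi: X \to Y$ is a measure-preserving morphism of measure $\cat{C}$-spaces; I claim $\MarkovFun(\phi) := (\MarkovFun(\phi_c))_{c \in \cat{C}}$ is a measure-preserving Markov morphism. The naturality squares commute in $\Markov$ exactly as in the proof of \cref{prop:markov-relaxation}, since $\MarkovFun: \Meas \to \Markov$ is a functor and so carries naturality squares to naturality squares; equivalently, post-composition induces $\MarkovFun_*: [\cat{C},\Meas_*] \to [\cat{C},\Markov_*]$. What remains is to check that each component $\MarkovFun(\phi_c)$ is measure-preserving as a Markov kernel whenever $\phi_c$ is measure-preserving as a function. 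This is the one new point, and it is immediate from the definitions: for a deterministic kernel, the operator action on measures $\mu_X \mapsto \mu_X \MarkovFun(\phi_c)$ agrees with pushforward $\mu_X \mapsto \mu_X \circ \phi_c^{-1}$, so $\mu_{X(c)} \MarkovFun(\phi_c) = \mu_{X(c)} \circ \phi_c^{-1} = \mu_{Y(c)}$. Hence if the measure-preserving morphism problem has a solution, so does the measure-preserving Markov morphism problem.

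For the convexity half, I would simply write out the problem: find Markov kernels $\Phi_c: X(c) \to Y(c)$ for $c \in \cat{C}$ subject to the naturality constraints $Xf \cdot \Phi_{c'} = \Phi_c \cdot Yf$ for all $f: c \to c'$, together with the measure-preservation constraints $\mu_{X(c)} \Phi_c = \mu_{Y(c)}$ for all $c \in \cat{C}$. As in \cref{prop:markov-relaxation}, the Markov kernels indexed by $c \in \cat{C}$ form a convex space, the naturality constraints are linear, and the new measure-preservation constraints $\mu_{X(c)} \Phi_c = \mu_{Y(c)}$ are also linear in the $\Phi_c$ (the operator $M \mapsto \mu_{X(c)} M$ is linear). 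So the feasible set is the intersection of a convex set with finitely many affine hyperplanes, hence convex, and the problem is a convex feasibility problem; in the finite case it is once more a linear program. I do not anticipate a real obstacle here — the only thing to be careful about is the bookkeeping that the deterministic-kernel pushforward coincides with the function pushforward, which is exactly the content of the operator notation for Markov kernels acting on measures, and this was already set up in the excerpt.
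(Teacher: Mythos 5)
Your proposal is correct and follows essentially the same route as the paper: the key observation $\mu f = \mu \MarkovFun(f)$ shows that $\MarkovFun$ restricts to measure-preserving morphisms, the relaxation then follows from the argument of \cref{prop:markov-relaxation}, and convexity holds because the measure-preservation conditions merely add linear equality constraints to an already convex feasibility problem. No gaps.
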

\begin{proof}
  For any function $f$ on a measure space $\mu$, we have
  $\mu f = \mu \MarkovFun(f)$. Thus, measure-preserving functions correspond to
  measure-preserving deterministic kernels, and the embedding
  $\MarkovFun: \Meas_* \to \Meas_*$ restricts to an embedding
  $\Fix(\Meas_*) \to \Fix(\Markov_*)$. The relaxation follows by the argument of
  \cref{prop:markov-relaxation}. To prove the convexity, observe that the
  measure-preserving Markov morphism problem,
  \begin{equation*}
    \begin{aligned}
      \text{find}\quad
        & \Phi_c: X(c) \to Y(c),\ c \in \cat{C} \\
      \text{s.t.}\quad
        & \mu_{X(c)} \Phi_{c} = \mu_{Y(c)}, \quad \forall c \in \cat{C} \\
        & Xf \cdot \Phi_{c'} = \Phi_c \cdot Yf,\quad
          \forall f: c \to c' \text{ in } \cat{C},
    \end{aligned}
  \end{equation*}
  merely adds linear equality constraints to the Markov morphism problem.
\end{proof}

As before, when the $\cat{C}$-sets are finite, the feasibility problem is a
linear program.

Insisting that Markov kernels preserve measure brings us closer to classical
optimal transport, formulated in terms of couplings. For any Markov kernel
$M: X \to Y$ preserving finite measures $\mu_X$ and $\mu_Y$, the product measure
$\mu_X \otimes M$ has marginals $\mu_X$ and $\mu_X M = \mu_Y$ and hence is a
coupling of $\mu_X$ and $\mu_Y$. Conversely, if $\pi$ is a product measure on
$X \times Y$ with marginals $\mu_X$ and $\mu_Y$, then by the disintegration
theorem (\cref{thm:disintegration-measure}), there exists a Markov kernel
$M: X \to Y$, unique up to sets of $\mu_X$-measure zero, such that
$\pi = \mu_X \otimes M$, and any such kernel $M$ satisfies $\mu_X M = \mu_Y$.
So, up to null sets, couplings and measure-preserving Markov kernels are the
same. They are even the same as morphisms of measure spaces. Couplings have a
standard composition law, known in optimal transport as the \emph{gluing lemma}
\parencite[Lemma 7.6]{villani2003}, and the composition laws for couplings and
kernels are compatible.

Despite this equivalence, we formulate the content of this paper entirely using
Markov kernels, not couplings. In order to compose couplings, one must first
compute disintegrations, and, while disintegration is a linear operation,
introducing it complicates the optimization problem. Also, and more importantly,
we routinely use Markov kernels that are not measure-preserving. There is no
correspondence between general Markov kernels and couplings. The difference,
roughly speaking, is that Markov kernels are the probabilistic analogue of
functions, while couplings are an analogue of bijections, or of correspondences
\parencite{memoli2011}.

Incidentally, workers in optimal transport have long observed that the
measure-preserving property of a coupling, which in particular requires that the
coupled measures have equal mass, is burdensome in applications. In response
various notions of \emph{unbalanced optimal transport} have been proposed
\parencite[Section 10.2]{peyre2019}. Markov kernels offer another alternative to
couplings.

\section{Hausdorff metric on metric \texorpdfstring{$\cat{C}$}{C}-spaces}
\label{sec:hausdorff}

The $\cat{C}$-set homomorphism problem is too stringent for practical matching
of graphs and other structures. Morphisms of $\cat{C}$-sets, even Markov
morphisms, are all-or-nothing: either they exist or they do not, and when they
do exist, they are distinguished only by coarse qualitative distinctions, like
that of homomorphism versus isomorphism. This is problematic in scientific and
statistical applications, in which the data, be it structural or numerical, is
generally subject to randomness and measurement error. To be tolerant to noise,
we should use an inexact, quantitative measure of structural similarity or
dissimilarity. One approach to dissimilarity, possibly the most important, is
via the ubiquitous mathematical concept of metric.

Throughout the rest of this paper, we develop the metric approach to matching
$\cat{C}$-sets. We will eventually, in \cref{sec:wasserstein}, construct a
computationally tractable, Wasserstein-style metric on $\cat{C}$-sets. In this
section, we focus on the purely metric aspects of the problem. The
Hausdorff-style metric on $\cat{C}$-sets that we propose is generally hard to
compute, but is helpful in isolating the metric concepts from the probabilistic.
It may also be interesting in its own right, as a theoretical tool.

The central idea is to weaken the constraints defining a $\cat{C}$-set
homomorphism from exact equality to approximate equality, with the quality of
approximation determined by a metric on morphisms. Schematically, if $X$ and $Y$
are $\cat{C}$-sets and $\phi: X \to Y$ is a transformation, not necessarily
natural, then for each morphism $f: c \to c'$ in $\cat{C}$, we have a ``lax''
naturality square\footnotemark{}
\begin{equation*}
  \begin{tikzcd}
    X(c) \rar{X(f)} \dar[][left]{\phi_c}
      & X(c') \dar{\phi_{c'}} \dlar[Rightarrow] \\
    Y(c) \rar[][below]{Y(f)}
      & Y(c'),
  \end{tikzcd}
\end{equation*}
where the double arrow represents the value
$d(Xf \cdot \phi_{c'}, \phi_c \cdot Yf)$ of some metric $d$ defined on functions
$X(c) \to Y(c')$. We aggregate these values over all morphisms, or all morphism
generators, $f: c \to c'$ in $\cat{C}$ to obtain an total nonnegative weight for
the transformation. The Hausdorff distance $d_H(X,Y)$ is the weight attained by
optimizing over all transformations $\phi: X \to Y$.

\footnotetext{The mapping $\phi: X \to Y$ can be seen as an enriched lax natural
  transformation. In this section we occasionally use the language of enriched
  category theory \parencite{lawvere1973,kelly1982}, but always in such a way
  that the meaning of the terms is clear from context. We assume no knowledge of
  this subject.}

As a first step in rendering this idea precise, we recall the basic concepts of
metric spaces and metrics on function spaces. It is convenient to work with
a definition of metric that is more general than the classical notion.

\begin{definition}[Metric spaces]
  A \emph{Lawvere metric space}, which we call simply a \emph{metric space}, is
  a set $X$ together with a function $d_X: X \times X \to [0,\infty]$, taking
  values in the extended nonnegative real numbers, that satisfies the identity
  law, $d_X(x,x) = 0$ for all $x \in X$, and the \emph{triangle inequality},
  \begin{equation*}
    d_X(x,x'') \leq d_X(x,x') + d_X(x',x''), \qquad \forall x,x',x'' \in X.
  \end{equation*}
  A metric space $(X,d_X)$ is \emph{classical} if three further axioms are
  satisfied:\footnotemark
  \begin{enumerate}[(i)]
  \item \emph{Finiteness}: $d_X(x,x') < \infty$ for all $x,x' \in X$;
  \item \emph{Positive definiteness}: $x = x'$ whenever $d_X(x,x') = 0$;
  \item \emph{Symmetry}: $d_X(x,x') = d_X(x',x)$ for all $x,x' \in X$.
  \end{enumerate}
  The category $\Met$ has metric spaces as objects and functions between them as
  morphisms.
\end{definition}

\footnotetext{Metrics that fail to satisfy one or more of these axioms occur
  often in metric geometry \parencite{bridson1999,burago2001}, under names like
  ``extended metrics,'' ``pseudometrics,'' and ``quasimetrics.'' The term
  ``Lawvere metric space'' derives from Lawvere's study \parencite{lawvere1973}
  of metric spaces as categories enriched in $[0,\infty]$.}

Unless otherwise noted, all metrics in this paper are in the above generalized
sense.

\begin{example}[Shortest path distance] \label{ex:shortest-path-distance}
  To reprise an example from \cref{sec:introduction}, if $X$ is a graph, then a
  metric is defined on its vertices by letting $d_{X(V)}(v,v')$ be the length of
  the shortest directed path from $v$ to $v'$. The metric is finite if $X$ is
  finite and strongly connected, it is always positive definite, and it is
  symmetric if $X$ is a symmetric graph. Generalizing slightly, if $X$ is a
  weighted graph, where each edge carries a nonnegative weight, a metric on its
  vertices is defined by the shortest weighted path. This metric is positive
  definite if the edge weights are strictly positive.
\end{example}

As outlined above, we will work with $\cat{C}$-sets in categories $\cat{S}$
admitting a measure of distance between morphisms.

\begin{definition}[Metric categories]
  A \emph{metric category} is a category enriched in $\Met$, i.e., a category
  $\cat{S}$ whose hom-sets $\cat{S}(X,Y)$ each have the structure of a metric
  space.
\end{definition}

Under the supremum metric, $\Met$ is itself a metric category. Recall that for
any set $X$ and metric space $Y$, the \emph{supremum metric} on functions
$f,g: X \to Y$ is defined by
\begin{equation*}
  d_\infty(f,g) := \sup_{x \in X} d_Y(f(x),g(x)).
\end{equation*}
When $Y$ is a classical metric space, the supremum metric is also classical when
restricted to bounded functions, i.e., to functions $f: X \to Y$ such that
\begin{equation*}
  \sup_{x \in X} d_Y(y_0,f(x)) < \infty
\end{equation*}
for some (and hence any) $y_0 \in Y$.

Another prominent example of a metric category comes from metric measure spaces.
Later, in \cref{sec:wasserstein}, we will see other examples.

\begin{definition}[Metric measure spaces]
  A \emph{metric measure space}, or \emph{mm space}, is a Polish measurable
  space $X$ together with a metric $d_X$ and a $\sigma$-finite measure $\mu_X$.
  We do not assume that $d_X$ is a classical metric or that it metrizes the
  topology of $X$, although we do require that $d_X$ be lower-semicontinuous
  with respect to the topology of $X$ and so, in particular, be Borel
  measurable.\footnotemark

  The category $\MM$ has metric measure spaces as objects and measurable
  functions as morphisms.
\end{definition}

\footnotetext{Our definition of an mm space is weaker than usual. Most authors
  assume that $d_X$ metrizes the topology of $X$
  \parencite{gromov1999,villani2008,shioya2016}, and some also require that
  $\mu_X$ has full support \parencite{memoli2011}. Here, the Polish topology of
  $X$ serves only as a regularity condition to exclude pathological
  $\sigma$-algebras.}

Under any of the $L^p$ metrics, $\MM$ is a metric category. Recall that for any
measure space $X$ and metric space $Y$, the \emph{$L^p$ metric} on measurable
functions $f,g: X \to Y$ is
\begin{equation*}
  d_{L^p}(f,g) := \begin{cases} \displaystyle
    \left( \int_X d_Y(f(x),g(x))^p\, \mu_X(dx) \right)^{1/p} &
      \text{when $1 \leq p < \infty$} \\ \displaystyle
    \esssup_{x \in X} d_Y(f(x),g(x)) &
      \text{when $p = \infty$}.
  \end{cases}
\end{equation*}
The essential supremum metric differs from the supremum metric only in being
insensitive to sets of $\mu_X$-measure zero.

When $Y$ is a classical metric space, the $L^p$ metrics are also classical when
restricted to the $L^p$ spaces. In this context, the space $L^p(X,Y)$ consists
of equivalence classes of functions $f: X \to Y$ that have finite moments of
order $p$,
\begin{equation*}
  \int_X d_Y(y_0,f(x))^p\, \mu_X(dx) < \infty
  \qquad\text{for some (and hence any) $y_0 \in Y$},
\end{equation*}
when $1 \leq p < \infty$, or that are essentially bounded, when $p = \infty$.
The equivalence relation is that of equality $\mu_X$-almost everywhere.

\begin{definition}[Metric $\cat{C}$-spaces]
  A \emph{metric $\cat{C}$-space} is a $\cat{C}$-set in $\Met$. Likewise, a
  \emph{metric measure $\cat{C}$-space}, or \emph{mm $\cat{C}$-space}, is a
  $\cat{C}$-set in $\MM$.
\end{definition}

As $\Met$ and $\MM$ are metric categories, we will be able to define
quantitative measures of dissimilarity between metric $\cat{C}$-spaces and
between metric measure $\cat{C}$-spaces. However, in order that the
dissimilarity measures be metrics, we must restrict the $\cat{C}$-set
transformations to those whose components do not increase distances. We now
formulate this requirement abstractly, for a general metric category $\cat{S}$.

\begin{definition}[Short morphisms]
  A morphism $f: X \to Y$ in a metric category $\cat{S}$ is \emph{short} if it
  does not increase distances upon pre-composition or post-composition. In other
  words, we have $d(fg,fg') \leq d(g,g')$ for all morphisms $g,g': Y \to Z$ in
  $\cat{S}$, and we have $d(hf,h'f) \leq d(h,h')$ for all morphisms
  $h,h': W \to X$ in $\cat{S}$.
\end{definition}

The class of morphisms in $\cat{S}$ that do not increases distances upon
pre-composition is clearly closed under composition and includes the identities,
and likewise for post-composition. The short morphisms in $\cat{S}$ therefore
form a subcategory of $\cat{S}$, which we denote by $\Short(\cat{S})$.

Characterizing the short morphisms of metric spaces and metric measure spaces is
straightforward. The first example shows that our terminology is consistent with
standard usage.

\begin{proposition}[Short morphisms of metric spaces] \label{prop:short-metric}
  The short morphisms of metric spaces are \emph{short maps},\footnotemark{}
  namely functions $f: X \to Y$ of metric spaces $X,Y$ such that
  \begin{equation*}
    d_Y(f(x),f(x')) \leq d_X(x,x'), \qquad \forall x,x' \in X.
  \end{equation*}
  
  Consequently, $\Short(\Met)$ is the category of metric spaces and short maps.
\end{proposition}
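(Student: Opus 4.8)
\emph{Proof proposal.} The plan is to handle the two inequalities in the definition of a short morphism separately, using that the hom-metric $d$ on $\Met$ is the supremum metric $d_\infty$. Fix a function $f\colon X\to Y$ between metric spaces whose shortness we wish to characterize.

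The first step is to observe that $f$ never increases distances upon pre-composition, i.e.\ that $d_\infty(f\cdot g,f\cdot g')\le d_\infty(g,g')$ holds for \emph{every} $f$ and all $g,g'\colon Y\to Z$. Indeed, $d_\infty(f\cdot g,f\cdot g')=\sup_{x\in X}d_Z(g(f(x)),g'(f(x)))$ is the supremum of the function $y\mapsto d_Z(g(y),g'(y))$ over the subset $f(X)\subseteq Y$, hence is bounded above by $\sup_{y\in Y}d_Z(g(y),g'(y))=d_\infty(g,g')$. (The edge case $X=\emptyset$ is harmless, the left side being $0$.) Consequently $f$ is short if and only if the single remaining condition, $d_\infty(h\cdot f,h'\cdot f)\le d_\infty(h,h')$ for all $h,h'\colon W\to X$, holds.

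The second step is to show that this remaining condition is equivalent to $f$ being a short map. If $d_Y(f(x),f(x'))\le d_X(x,x')$ for all $x,x'\in X$, then for any $h,h'\colon W\to X$ we get, working pointwise in $w$, $d_\infty(h\cdot f,h'\cdot f)=\sup_{w\in W}d_Y(f(h(w)),f(h'(w)))\le \sup_{w\in W}d_X(h(w),h'(w))=d_\infty(h,h')$. For the converse — the one place a choice is needed — I would take $W$ to be the one-point metric space $\{*\}$, which is an object of $\Met$, and identify a function $\{*\}\to X$ with its single value: setting $h(*)=x$ and $h'(*)=x'$ gives $d_\infty(h,h')=d_X(x,x')$ and $d_\infty(h\cdot f,h'\cdot f)=d_Y(f(x),f(x'))$, so the condition forces $d_Y(f(x),f(x'))\le d_X(x,x')$ for all $x,x'\in X$.

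Combining the two steps shows that $f$ is short precisely when it is a short map, and the closing assertion is then immediate: $\Short(\Met)$ has the same objects as $\Met$ by definition, and we have just identified its morphisms. I do not anticipate a genuine obstacle here; the only point worth stating prominently is that the pre-composition half of the definition is vacuous on $\Met$, so all of the content is carried by post-composition, where probing with the one-point space recovers the Lipschitz inequality exactly.
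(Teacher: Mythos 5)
Your proposal is correct and follows essentially the same route as the paper's proof: you show pre-composition is automatically non-expansive for any $f$ via the restriction of the supremum to $f(X)\subseteq Y$, and you characterize the post-composition condition by probing with maps out of the one-point space, exactly as the paper does with its generalized elements $e_x\colon I\to X$. No gaps; the remark about the empty domain is a harmless extra.
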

\footnotetext{Other names for short maps are ``contractions,''
  ``distance-decreasing maps,'' ``metric maps,'' and ``nonexpansive maps.''
  Alternatively, short maps are Lipschitz functions with Lipschitz constant 1.
  The category of metric spaces and short maps was first studied by Isbell
  \parencite{isbell1964}, in the case of classical metric spaces, and by Lawvere
  \parencite{lawvere1973}, in the case of generalized metric spaces.}
\begin{proof}
  For \emph{any} functions $f: X \to Y$ and $g,g': Y \to Z$, we have
  \begin{equation*}
    d_\infty(fg,fg')
      = \sup_{y \in f(X)} d_Z(g(y),g'(y))
      \leq \sup_{y \in Y} d_Z(g(y),g'(y))
      = d_\infty(g,g').
  \end{equation*}
  If, moreover, $f$ is a short map, then for any functions $h,h': W \to X$,
  \begin{equation*}
    d_\infty(hf,h'f)
      = \sup_{w \in W} \underbrace{d_Y(f(h(w)), f(h'(w)))}_{\leq d_X(h(w),h'(w))}
      \leq d_\infty(h,h').
  \end{equation*}
  Thus short maps are short morphisms of $\Met$. Conversely, if $f:X \to Y$ is a
  short morphism, let $e_x: I \to X$ denote the unique map from the terminal
  space $I = \{*\}$ onto $x \in X$. Then for any $x,x' \in X$,
  \begin{equation*}
    d_\infty(e_x f, e_{x'} f) \leq d_\infty(e_x,e_{x'})
    \quad\text{if and only if}\quad
    d_Y(f(x),f(x')) \leq d_X(x,x'),
  \end{equation*}
  proving that $f$ is a short map.
\end{proof}

\begin{example}[{Short maps of graphs \parencite[Corollary 1.2]{hell2004}}]
  Let $X$ and $Y$ be graphs with the shortest path distance making the vertex
  sets into metric spaces. For any graph homomorphism $\phi: X \to Y$, the
  vertex map $\phi_V: X(V) \to Y(V)$ is a short map of metric spaces, since
  $\phi$ transforms any path in $X$ into a path in $Y$ of the same length. On
  the other hand, not every short map can be extended to a graph homomorphism
  (consider a constant map). Thus, shorts map of graphs are a weakening of graph
  homomorphisms.
\end{example}

\begin{proposition}[Short morphisms of mm spaces] \label{prop:short-mm}
  The short morphisms between metric measure spaces $X$ and $Y$ are measurable
  maps $f: X \to Y$ that are both measure-decreasing,\footnotemark{}
  \begin{equation*}
    \begin{cases}
      \mu_X f \leq \mu_Y & \quad \text{when $1 \leq p < \infty$} \\
      \mu_X f \ll \mu_Y & \quad \text{when $p = \infty$},
    \end{cases}
  \end{equation*}
  and distance-decreasing,
  \begin{equation*}
    d_Y(f(x),f(x')) \leq d_X(x,x'), \qquad \forall x,x' \in X.
  \end{equation*}
  Consequently, $\Short(\MM)$ is the category of mm spaces and distance- and
  measure-decreasing maps.
\end{proposition}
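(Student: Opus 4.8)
The plan is to follow the template of \cref{prop:short-metric}, handling pre-composition and post-composition of $f\colon X\to Y$ in turn. These two shortness conditions turn out to decouple completely: pre-composition will be governed by the distance-decreasing property and post-composition by the measure-decreasing property, with no interaction between them. Throughout, $f$ is measurable because morphisms of $\MM$ are measurable by definition, and $d_{L^p}$ denotes the $L^p$ metric on the relevant hom-set, computed against the source measure.

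For pre-composition, let $h,h'\colon W\to X$ be measurable. Unwinding the definition of $d_{L^p}$ on $\MM(W,Y)$ gives, for $1\le p<\infty$,
\[
  d_{L^p}(hf,h'f) = \left(\int_W d_Y\bigl(f(h(w)),f(h'(w))\bigr)^p\,\mu_W(dw)\right)^{1/p},
\]
and the same with an essential supremum in place of the integral when $p=\infty$. If $f$ is distance-decreasing, then the integrand is pointwise bounded by $d_X(h(w),h'(w))^p$, so $d_{L^p}(hf,h'f)\le d_{L^p}(h,h')$. Conversely, testing against the one-point mm space $W=I$ equipped with unit mass, together with the insertion maps $e_x,e_{x'}\colon I\to X$, reduces the inequality to $d_Y(f(x),f(x'))\le d_X(x,x')$ for all $x,x'\in X$, exactly as in \cref{prop:short-metric}. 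Hence $f$ is short upon pre-composition if and only if it is distance-decreasing, with no reference to the measures.

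For post-composition, let $g,g'\colon Y\to Z$ be measurable and put $\psi\colon Y\to[0,\infty]$, $\psi := d_Z(g(\cdot),g'(\cdot))^p$ for finite $p$ (drop the exponent when $p=\infty$); this is a nonnegative measurable function. The change of variables for the pushforward measure $\mu_X f = \mu_X\circ f^{-1}$ yields
\[
  d_{L^p}(fg,fg')^p = \int_Y \psi\,d(\mu_X f)
  \qquad\text{and}\qquad
  d_{L^p}(g,g')^p = \int_Y \psi\,d\mu_Y
\]
for $1\le p<\infty$, and the analogous identities with essential suprema for $p=\infty$, using that the $\mu_X$-essential supremum of $\psi\circ f$ equals the $(\mu_X f)$-essential supremum of $\psi$. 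The forward implication follows at once: if $\mu_X f\le\mu_Y$ (finite $p$), monotonicity of the integral in the measure gives $\int_Y\psi\,d(\mu_X f)\le\int_Y\psi\,d\mu_Y$; and if $\mu_X f\ll\mu_Y$ ($p=\infty$), then every $\mu_Y$-null set is $(\mu_X f)$-null, hence $(\mu_X f)\text{-}\esssup\,\psi\le\mu_Y\text{-}\esssup\,\psi$. In either case $f$ is short upon post-composition. For the converse it suffices to realize every indicator $\psi=\mathbf{1}_B$, $B\subseteq Y$ measurable, in the form $d_Z(g(\cdot),g'(\cdot))^p$: take $Z=\{0,1\}$ with the discrete metric and any finite measure, $g=\mathbf{1}_B$, and $g'\equiv 0$. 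Shortness upon post-composition then reads $(\mu_X f)(B)\le\mu_Y(B)$ for all $B$ when $p<\infty$, i.e.\ $\mu_X f\le\mu_Y$; and when $p=\infty$ it reads $\mu_Y(B)=0\Rightarrow(\mu_X f)(B)=0$, i.e.\ $\mu_X f\ll\mu_Y$.

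Combining the two halves, the short morphisms of $\MM$ are exactly the distance- and measure-decreasing maps; since these are closed under composition (the pushforward operation is monotone and preserves absolute continuity) and contain the identities, they constitute the subcategory $\Short(\MM)$. The one place where care is needed is the $p=\infty$ case: essential suprema are insensitive to null sets, so the correct condition there is absolute continuity $\mu_X f\ll\mu_Y$ rather than domination $\mu_X f\le\mu_Y$. This asymmetry — already visible in the statement — is the only substantive departure from the finite-$p$ argument; the rest is the bookkeeping of the pushforward change of variables, which is the main technical ingredient to get right.
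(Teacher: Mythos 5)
Your proof is correct and follows essentially the same route as the paper's: a pointwise bound plus generalized elements $e_x\colon I\to X$ from the one-point space for the distance-decreasing half, and the pushforward change of variables plus indicator test functions for the measure-decreasing half; you additionally handle $p=\infty$, which the paper explicitly omits. One small caution: for the domination converse at finite $p$ your two-point space $Z=\{0,1\}$ must have its points at \emph{finite} positive distance (as in the paper's ``classical metric space with at least two points''), since under this paper's convention that the discrete metric is $\infty$ off the diagonal the indicator test would only detect absolute continuity rather than $\mu_X f \leq \mu_Y$.
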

\footnotetext{A contravariance is involved in describing $\mu_X f \leq \mu_Y$ as
  $f: X \to Y$ being measure-decreasing. In fact, it is the induced set map
  $f^{-1}: \Sigma_Y \to \Sigma_X$ that decreases measure; the map $f$ does not
  act on measurable sets.}
\begin{proof}
  We prove only the case $1 \leq p < \infty$.

  First, we show that $f:X \to Y$ is measure-decreasing if and only if
  pre-composing with $f$ decreases $L^p$ distances. If $f$ is
  measure-decreasing, then for measurable functions $g,g': Y \to Z$,
  \begin{equation*}
    d_{L^p}(fg,fg')^p
      = \int_Y d_Z(g(y),g'(y))^p\, \mu_Xf(dy)
      \leq \int_Y d_Z(g(y),g'(y))^p\, \mu_Y(dy)
      = d_{L^p}(g,g')^p.
  \end{equation*}
  Conversely, if $f$ is not measure-decreasing, then there exists a set
  $B \in \Sigma_Y$ such that $\mu_X f(B) > \mu_Y(B)$. Choose a classical metric
  space $Z$ with at least two points $z$ and $z'$, let $g: Y \to Z$ be the
  constant function at $z$, and let $g': Y \to Z$ be the function equal to $z'$
  on $B$ and equal to $z$ outside of $B$. Then, by construction,
  $d_{L^p}(fg,fg') > d_{L^p}(g,g')$.

  Now we show that $f: X \to Y$ is distance-decreasing (a short map of metric
  spaces) if and only if post-composing with $f$ decreases $L^p$ distances. If
  $f$ is distance-decreasing, then for any measurable functions $h,h': W \to X$,
  \begin{equation*}
    d_{L^p}(hf,h'f)^p
    = \int_W {\underbrace{d_Y(f(h(w)),f(h'(w)))}%
        _{\leq d_X(h(w),h'(w))}}^p\, \mu_W(dw)
    \leq d_{L^p}(h,h')^p.
  \end{equation*}
  For the converse direction, let $I = \{*\}$ be the singleton probability space
  and let $e_x: I \to X$ denote the generalized elements, as in the proof of
  \cref{prop:short-metric}. Then for any $x,x' \in X$,
  \begin{equation*}
    d_{L^p}(e_x f, e_{x'} f) \leq d_{L^p}(e_x,e_{x'})
    \quad\text{if and only if}\quad
    d_Y(f(x),f(x')) \leq d_X(x,x'),
  \end{equation*}
  proving that $f$ is distance-decreasing.
\end{proof}

We saw in \cref{sec:markov} that a measure-preserving function is a surrogate
for a bijection. Likewise, a measure-decreasing function is a surrogate for an
injection, since a function on finite sets is measure-decreasing with respect to
counting measure if and only if it is injective. For functions on finite measure
spaces of equal mass, particularly probability spaces, being measure-decreasing
is the same as being measure-preserving. The elementary version of this fact is
that on finite sets of equal size, injections are the same as bijections.

By definition, a metric category is a category enriched in $\Met$. The
enrichment extends to short morphisms in that for any category $\cat{S}$
enriched in $\Met$, the subcategory $\Short(\cat{S})$ is enriched in
$\Short(\Met)$. We digress slightly to clarify this statement.

\begin{proposition}
  Let $\cat{S}$ be a category. The following are equivalent:
  \begin{enumerate}[(i)]
  \item The category $\cat{S}$ is enriched in $\Short(\Met)$;
  \item For any morphisms
    $\begin{tikzcd}[cramped]
      X \rar[shift left]{f} \rar[shift right][below]{g}
      & Y \rar[shift left]{h} \rar[shift right][below]{k}
      & Z
    \end{tikzcd}$
    in $\cat{S}$,
    \begin{equation*}
      d(fh, gk) \leq d(f,g) + d(h,k);
    \end{equation*}
  \item For any morphisms
    $\begin{tikzcd}[cramped]
      X \rar{f} & Y \rar[shift left]{h} \rar[shift right][below]{k} & Z
    \end{tikzcd}$
    in $\cat{S}$, we have $d(fh,fk) \leq d(h,k)$, and for any morphisms
    $\begin{tikzcd}[cramped]
      X \rar[shift left]{f} \rar[shift right][below]{g} & Y \rar{h} & Z
    \end{tikzcd}$,
    we have  $d(fh,gh) \leq d(f,g)$.
  \end{enumerate}
\end{proposition}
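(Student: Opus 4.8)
The plan is to prove the cycle of equivalences by first translating condition (i) into concrete terms and then performing elementary metric-space manipulations. Recall that the metric category $\cat{S}$ comes equipped with composition functions $\cat{S}(X,Y) \times \cat{S}(Y,Z) \to \cat{S}(X,Z)$, $(f,h) \mapsto fh$, and identity-selecting maps $I \to \cat{S}(X,X)$ from the one-point metric space $I$. To say that $\cat{S}$ is enriched in $\Short(\Met)$ is to say that, with respect to the monoidal structure on $\Met$ carried by the additive product metric $d\bigl((f,h),(g,k)\bigr) = d(f,g) + d(h,k)$ — which restricts to a monoidal structure on $\Short(\Met)$ with unit $I$ — every composition map is a short map and every identity-selecting map is a short map. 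The identity-selecting maps are short for the trivial reason that $I$ has a single point. Hence (i) is equivalent to the assertion that every composition map is short, which, unwinding the definition of shortness with respect to the additive product metric, says exactly: $d(fh, gk) \le d(f,g) + d(h,k)$ for all composable $f,g \colon X \to Y$ and $h,k \colon Y \to Z$. This is precisely inequality (ii), so (i) $\Leftrightarrow$ (ii).

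For (ii) $\Rightarrow$ (iii) I would merely specialize. Setting $g = f$ in (ii) and using the identity law $d(f,f) = 0$ gives $d(fh, fk) \le d(h,k)$; setting $k = h$ and using $d(h,h) = 0$ gives $d(fh, gh) \le d(f,g)$. For (iii) $\Rightarrow$ (ii) I would insert the intermediate composite $gh \colon X \to Z$ and apply the triangle inequality in the metric space $\cat{S}(X,Z)$:
\[
  d(fh, gk) \le d(fh, gh) + d(gh, gk) \le d(f,g) + d(h,k),
\]
where the second inequality bounds the first summand by post-composition shortness (with $h$ held fixed) and the second summand by pre-composition shortness (with $g$ held fixed). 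This closes the loop.

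No step presents a genuine difficulty; the only place that calls for care is the recasting of (i), where one must identify the intended monoidal structure on $\Short(\Met)$ — the additive, $L^1$-type product, not the $\max$ product underlying the categorical product — and observe that the unit axioms hold automatically because $I$ is a singleton. Once (i) has been reduced to the shortness of composition, the remaining two implications are one-line arguments using only the identity law and the triangle inequality, which is why this clarifying proposition can safely be treated as a digression.
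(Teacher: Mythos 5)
Your proof is correct and follows essentially the same route as the paper's: (i) $\Leftrightarrow$ (ii) by unwinding the definition of enrichment (the paper simply cites the definition, whereas you helpfully make explicit that the relevant monoidal structure on $\Short(\Met)$ is the additive one), (ii) $\Rightarrow$ (iii) by specializing $g=f$ or $k=h$ together with the identity law, and (iii) $\Rightarrow$ (ii) by inserting the intermediate composite $gh$ and applying the triangle inequality in $\cat{S}(X,Z)$.
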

\begin{proof}
  Conditions (i) and (ii) are equivalent by the definition of an enriched
  category. We prove that (ii) and (iii) are equivalent. If (ii) holds, then
  taking $f=g$ gives $d(fh,fk) \leq d(f,f) + d(h,k) = d(h,k)$. Similarly, taking
  $h=k$ gives $d(fh,gh) \leq d(f,g) + d(h,h) = d(f,g)$. Conversely, if (iii)
  holds, then by the triangle inequality,
  \begin{equation*}
    d(fh,gk) \leq d(fh,gh) + d(gh,gk) \leq d(f,g) + d(h,k). \qedhere
  \end{equation*}
\end{proof}

We now define a Hausdorff-style metric on $\cat{C}$-sets in a general metric
category $\cat{S}$. We instantiate it with $\cat{S} = \Met$ and $\cat{S} = \MM$
below and with other metric categories in \cref{sec:wasserstein}. In the
definition, fix $1 \leq p \leq \infty$ and write the $\ell^p$ norm on numbers in
$[0,\infty]$ as the binary operator $\pplus$, meaning that
$a \pplus b = (a^p + b^p)^{1/p}$ when $p < \infty$ and
$a \pplus b = \max\{a,b\}$ when $p = \infty$.

\begin{definition}[Hausdorff metric on $\cat{C}$-sets]
    \label{def:hausdorff-metric}
  Let $\cat{C}$ be a finitely presented category and let $\cat{S}$ be a metric
  category. The \emph{Hausdorff metric} on $\cat{C}$-sets in $\cat{S}$ is given
  by, for $X,Y \in [\cat{C},\cat{S}]$,
  \begin{equation*}
    d_H(X,Y) := \inf_{\phi: X \to Y} \bigpplus_{f: c \to c'}
      d(Xf \cdot \phi_{c'}, \phi_c \cdot Yf),
  \end{equation*}
  where the $\ell^p$ sum is over a fixed, finite generating set of morphisms in
  $\cat{C}$ and the infimum is over all transformations $\phi: X \to Y$, not
  necessarily natural, whose components $\phi_c: X(c) \to Y(c)$ in $\cat{S}$ are
  short morphisms (so belong to $\Short(\cat{S})$).
\end{definition}

The Hausdorff metric is indeed a metric, albeit not a classical one.

\begin{theorem} \label{thm:hausdorff-metric}
  As defined above, the Hausdorff metric on $\cat{C}$-sets in $\cat{S}$ is a
  metric.
\end{theorem}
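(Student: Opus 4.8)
The plan is to verify the two metric axioms from the definition of a Lawvere metric space: the identity law $d_H(X,X) = 0$, and the triangle inequality $d_H(X,Z) \le d_H(X,Y) + d_H(Y,Z)$. Symmetry and positive-definiteness are not required (and in general fail), so these are the only obligations.

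For the identity law, I would take $\phi = 1_X$, the identity transformation, whose components $1_{X(c)}$ are short morphisms (identities are always short). Each term in the $\ell^p$ sum is then $d(Xf \cdot 1_{X(c')}, 1_{X(c)} \cdot Xf) = d(Xf, Xf) = 0$ by the identity law of the hom-metric on $\cat{S}$. Hence the $\ell^p$ aggregate is $0$, and since $d_H$ is an infimum of nonnegative quantities, $d_H(X,X) = 0$.

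The triangle inequality is the main obstacle, and the key idea is that composition of short transformations behaves well with respect to the lax naturality defects. Given transformations $\phi: X \to Y$ and $\psi: Y \to Z$ with short components, form the composite $\psi\phi: X \to Z$ with components $(\psi\phi)_c := \phi_c \cdot \psi_c$, which is again short since $\Short(\cat{S})$ is a subcategory. For a fixed generator $f: c \to c'$, I would estimate the defect $d\bigl(Xf \cdot (\psi\phi)_{c'},\, (\psi\phi)_c \cdot Zf\bigr)$ by inserting the intermediate term $\phi_c \cdot Yf \cdot \psi_{c'}$ and applying the triangle inequality in the hom-metric:
\begin{align*}
  d\bigl(Xf \cdot \phi_{c'} \cdot \psi_{c'},\ \phi_c \cdot Yf \cdot \psi_{c'}\bigr)
    &\le d\bigl(Xf \cdot \phi_{c'},\ \phi_c \cdot Yf\bigr), \\
  d\bigl(\phi_c \cdot Yf \cdot \psi_{c'},\ \phi_c \cdot \psi_c \cdot Zf\bigr)
    &\le d\bigl(Yf \cdot \psi_{c'},\ \psi_c \cdot Zf\bigr),
\end{align*}
where the first inequality uses that $\psi_{c'}$ is short (post-composition does not increase distance) and the second uses that $\phi_c$ is short (pre-composition does not increase distance). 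Summing these two bounds controls the $X$-to-$Z$ defect at $f$ by the sum of the $X$-to-$Y$ defect and the $Y$-to-$Z$ defect at $f$.

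Finally I would aggregate over the generating set. Using the triangle inequality for the $\ell^p$ norm $\pplus$ on the vector of defects indexed by generators $f$, the $\ell^p$ aggregate for $\psi\phi$ is bounded by the $\ell^p$ aggregate for $\phi$ plus that for $\psi$. Taking the infimum over $\phi$ and over $\psi$ independently — legitimate because $\psi\phi$ ranges over a subset of all short transformations $X \to Z$, so its infimum aggregate is no larger — yields $d_H(X,Z) \le d_H(X,Y) + d_H(Y,Z)$. One subtlety worth a remark is that the infima need not be attained, so the argument should be phrased with $\varepsilon$-approximate minimizers $\phi$ and $\psi$ and then let $\varepsilon \to 0$; this is routine. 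The only genuinely delicate point is keeping track of which component is short on which side in the composite estimate, which is exactly what the definition of short morphism (closed under both pre- and post-composition) was set up to handle.
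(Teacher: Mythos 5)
Your proposal is correct and follows essentially the same route as the paper's proof: the identity transformation for the identity law, and for the triangle inequality the same insertion of the intermediate term $\phi_c \cdot Yf \cdot \psi_{c'}$, with shortness of $\psi_{c'}$ controlling the first gap and shortness of $\phi_c$ the second, followed by the $\ell^p$ aggregation and passage to infima. The bookkeeping of which component is short on which side matches the paper exactly, and your remark about approximate minimizers is a harmless elaboration of a step the paper treats implicitly.
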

\begin{proof}
  For any $\cat{C}$-set $X$ in $\cat{S}$, taking the identity transformation
  $1_X: X \to X$ gives
  \begin{equation*}
    d_H(X,X) \leq \bigpplus_{f: c \to c'} d(Xf,Xf) = 0,
  \end{equation*}
  proving that $d_H(X,X) = 0$.

  So we need only verify the triangle inequality. Let $X$, $Y$, and $Z$ be
  $\cat{C}$-sets in $\cat{S}$. Fixing a morphism $f: c \to c'$ in $\cat{C}$,
  define the \emph{weight} of a transformation $\phi: X \to Y$ at $f$ to be the
  number $|\phi|_f := d(Xf \cdot \phi_{c'}, \phi_c \cdot Yf)$. We will show
  that, for any composable transformations
  $X \xrightarrow{\phi} Y \xrightarrow{\psi} Z$ with components in
  $\Short(\cat{S})$, we have a triangle inequality
  \begin{equation*}
    |\phi \cdot \psi|_f \leq |\phi|_f + |\psi|_f.
  \end{equation*}
  The proof then follows readily. By the triangle inequalities for the weights
  $|\blank|_f$ and for the $\ell^p$ sum,
  \begin{equation*}
    d_H(X,Z)
      \leq \bigpplus_{f: c \to c'} |\phi \cdot \psi|_f
      \leq \bigpplus_{f: c \to c'} |\phi|_f +
           \bigpplus_{f: c \to c'} |\psi|_f.
  \end{equation*}
  Take the infimum over the transformations $\phi: X \to Y$ and $\psi: Y \to Z$
  to conclude that $d_H(X,Y) \leq d_H(X,Y) + d_H(Y,Z)$.

  To prove the triangle inequality for the weights, let
  $X \xrightarrow{\phi} Y \xrightarrow{\psi} Z$ be composable transformations
  and consider a ``pasting diagram'' of form
  \begin{equation*}
    \begin{tikzcd}
      X(c) \dar[][left]{\phi_c} & \\
      Y(c) \rar{Y(f)} \dar[][left]{\psi_c}
        & Y(c') \dar{\psi_{c'}} \dlar[Rightarrow] \\
      Z(c) \rar[][below]{Z(f)}
        & Z(c')
    \end{tikzcd}
    \quad + \quad
     \begin{tikzcd}
      X(c) \rar{X(f)} \dar[][left]{\phi_c}
        & X(c') \dar{\phi_{c'}} \dlar[Rightarrow] \\
      Y(c) \rar[][below]{Y(f)}
        & Y(c') \dar{\psi_{c'}}  \\
        & Z(c')
    \end{tikzcd}
    \quad \geq \quad
    \begin{tikzcd}
      X(c) \rar{X(f)} \dar[][left]{\phi_c}
        & X(c') \dar{\phi_{c'}} \dlar[Rightarrow] \\
      Y(c) \rar{Y(f)} \dar[][left]{\psi_c}
        & Y(c') \dar{\psi_{c'}} \dlar[Rightarrow] \\
      Z(c) \rar[][below]{Z(f)}
        & Z(c')
    \end{tikzcd}.
  \end{equation*}
  Here is the argument encoded informally by this diagram, which is easy to
  understand if cumbersome to write down. Since the components of $\phi$ and
  $\psi$ are short morphisms, we have
  \begin{equation*}
    |\phi|_f
      = d(Xf \cdot \phi_{c'}, \phi_c \cdot Yf)
      \geq d(Xf \cdot \phi_{c'} \cdot \psi_{c'}, \phi_c \cdot Yf \cdot \psi_{c'})
  \end{equation*}
  and
  \begin{equation*}
    |\psi|_f
      = d(Yf \cdot \psi_{c'}, \psi_c \cdot Zf)
      \geq d(\phi_c \cdot Yf \cdot \psi_{c'}, \phi_c \cdot \psi_c \cdot Zf).
  \end{equation*}
  Therefore, by the triangle inequality in the hom-space $\cat{S}(X(c),Z(c'))$,
  \begin{align*}
    |\phi|_f + |\psi|_f
      &\geq
        d(Xf \cdot \phi_{c'} \cdot \psi_{c'}, \phi_c \cdot Yf \cdot \psi_{c'}) +
        d(\phi_c \cdot Yf \cdot \psi_{c'}, \phi_c \cdot \psi_c \cdot Zf) \\
      &\geq
        d(Xf \cdot \phi_{c'} \cdot \psi_{c'}, \phi_c \cdot \psi_c \cdot Zf) \\
      &= d(Xf \cdot (\phi \cdot \psi)_{c'}, (\phi \cdot \psi)_c \cdot Zf) \\
      &= |\phi \cdot \psi|_f.
  \end{align*}
  This completes the proof of the triangle inequality for $|\blank|_f$ and hence
  also for $d_H$.
\end{proof}

The assumptions of the theorem can be weakened or strengthened with concomitant
effects on the conclusion.

\begin{remark}[General cost functions]
  As the proof shows, the restriction to short morphisms is what guarantees the
  triangle inequality. Thus, in situations where the triangle inequality is
  inessential, we are free to take the infimum over arbitrary transformations
  $\phi: X \to Y$ with components in $\cat{S}$. We may as well also allow the
  hom-sets of $\cat{S}$ to carry general cost functions, not necessarily
  metrics.
\end{remark}

\begin{remark}[Generators and composites]
  From the coordinate-free perspective of categorical logic, the restriction to
  a finite generating set of morphisms in $\cat{C}$ is open to criticism, since
  it makes the Hausdorff metric depend on how the category $\cat{C}$ is
  presented. Can anything be said about the deviation from naturality for a
  generic morphism in $\cat{C}$? In general, no. If, however, we strengthen the
  assumptions to include $X$ and $Y$ being $\cat{C}$-sets in $\Short(\cat{S})$,
  not merely in $\cat{S}$, then for any composable morphisms
  $c \xrightarrow{f} c' \xrightarrow{g} c''$ in $\cat{C}$, an argument by
  pasting diagram of form
  \begin{equation*}
    \begin{tikzcd}
      X(c) \rar{X(f)} \dar[][left]{\phi_c}
        & X(c') \rar{X(g)} \dar{\phi_{c'}} \dlar[Rightarrow]
        & X(c'') \dar{\phi_{c''}} \dlar[Rightarrow] \\
      Y(c) \rar[][below]{Y(f)}
        & Y(c') \rar[][below]{Y(g)}
        & Y(c'')
    \end{tikzcd}
  \end{equation*}
  shows that for all transformations $\phi: X \to Y$,
  \begin{equation*}
    |\phi|_{f \cdot g} \leq |\phi|_f + |\phi|_g.
  \end{equation*}
  Thus, for $\cat{C}$-sets $X$ and $Y$ in $\Short(\cat{S})$, bounds on the
  weights of $\phi: X \to Y$ at generators yield bounds at composites.
\end{remark}

We emphasize that the Hausdorff metric on $\cat{S}$-valued $\cat{C}$-sets is not
a classical metric. Even when the underlying metrics on $\cat{S}$ are symmetric,
the Hausdorff metric is not, although it can be symmetrized in one of the usual
ways, such as
\begin{equation*}
  \max\{d_H(X,Y),d_H(Y,X)\} \qquad\text{or}\qquad
  \tfrac{1}{2}(d_H(X,Y) + d_H(Y,X)).
\end{equation*}
As a more fundamental matter, the Hausdorff metric is not positive definite,
since $d_H(X,Y) = 0$ whenever there exists a $\cat{C}$-set homomorphism
$\phi: X \to Y$ with components in $\Short(\cat{S})$. Similar statements apply
to the symmetrized metrics. Indeed, under any reasonable definition, the
distance between isomorphic $\cat{C}$-sets should be zero, so we cannot expect
to get positive definiteness without passing to equivalence classes of
isomorphic $\cat{C}$-sets. This is not a matter we will pursue.

We turn now to concrete examples. We frequently need to make a metric space out
of a set with no metric structure. There are several generic ways to do this,
but the most useful is the \emph{discrete metric}, defined on a set $X$ as
\begin{equation*}
  d_X(x,x') := \begin{cases}
    \infty &\text{if $x \neq x'$} \\
    0 &\text{if $x = x'$}.
  \end{cases}
\end{equation*}
A map $f: X \to Y$ out of a discrete metric space $X$ is always
short.\footnotemark{}

\footnotetext{The discrete metric is usually taken to be 1, not $\infty$, off
  the diagonal. Both metrics generate the same topology---the discrete
  topology---but only the infinite discrete metric satisfies a universal
  property in $\Short(\Met)$.}

We can make a $\cat{C}$-set $X$ into a metric $\cat{C}$-space by equipping every
set $X(c)$, $c \in \cat{C}$, with the discrete metric. On such discrete metric
$\cat{C}$-spaces, the Hausdorff metric reduces to the $\cat{C}$-set homomorphism
problem:
\begin{equation*}
  d_H(X,Y) = \begin{cases}
    0 &\text{if there exists a $\cat{C}$-set homomorphism $\phi: X \to Y$} \\
    \infty &\text{otherwise}.
  \end{cases}
\end{equation*}
It can therefore be at least as hard to compute the Hausdorff metric on metric
$\cat{C}$-spaces as it is to solve the $\cat{C}$-set homomorphism problem, which
is generally NP-hard. Overcoming such computational difficulties is a major
motivation for \cref{sec:wasserstein-markov,sec:wasserstein}.

More interesting things happen when the underlying metrics are not all discrete.
As a first example, we show that the classical Hausdorff metric on subsets of a
metric space is a special case of the Hausdorff metric on $\cat{C}$-sets,
justifying our terminology.

\begin{example}[Classical Hausdorff metric] \label{ex:classical-hausdorff}
  Let $\cat{C} = \left\{ * \xrightarrow{\attr} A \right\}$ be the theory of
  attributed sets, as in \cref{ex:attributed-sets}. Let $X$ and $Y$ be
  attributed sets under the discrete metric, with attributes valued in a fixed
  metric space $\Attr = X(A) = Y(A)$. Then $X$ and $Y$ are metric
  $\cat{C}$-spaces and the Hausdorff distance between them is
  \begin{align*}
    d_H(X,Y)
      &= \inf_{f: X \to Y} \sup_{x \in X} d_\Attr(\attr x, \attr f(x)) \\
      &= \sup_{x \in X} \inf_{y \in Y} d_\Attr(\attr x, \attr y),
  \end{align*}
  where the first infimum is over all functions $f: X \to Y$. If
  $X \xrightarrow{\attr} \Attr$ and $Y \xrightarrow{\attr} \Attr$ are injective,
  we can identify $X$ and $Y$ with subsets of the metric space $\Attr$. The
  Hausdorff distance then simplifies to
  \begin{equation*}
    d_H(X,Y) = \sup_{x \in X} \inf_{y \in Y} d_\Attr(x,y),
  \end{equation*}
  which is the classical Hausdorff metric in non-symmetric form. Assuming
  $\Attr$ is a symmetric metric space, we recover the standard Hausdorff metric
  upon symmetrization:
  \begin{equation*}
    \max\{d_H(X,Y), d_H(Y,X)\} = \max\left\{
      \sup_{x \in X} \inf_{y \in Y} d_\Attr(x,y),\,
      \sup_{y \in Y} \inf_{x \in X} d_\Attr(x,y)
    \right\}.
  \end{equation*}
\end{example}

In the next two examples, we define several possible Hausdorff metrics on
graphs.

\begin{example}[Hausdorff metric on attributed graphs]
    \label{ex:hausdorff-attributed-graphs}
  Let $X$ and $Y$ be vertex-attributed graphs, as in
  \cref{ex:attributed-graphs}, with discrete metrics on the vertex and edge sets
  and arbitrary metrics on the attribute sets. Then $X$ and $Y$ are attributed
  graphs in $\Met$ and the Hausdorff distance between them is
  \begin{equation*}
    d_H(X,Y) =
      \inf_{\substack{\phi \in \Graph(X,Y) \\ \phi_A: X(A) \to Y(A)}}
      \sup_{v \in X(V)} d_{Y(A)}(\phi_A(\attr v), \attr \phi_V(v)),
  \end{equation*}
  where the infimum is over graph homomorphisms
  $\phi = (\phi_V,\phi_E): X \to Y$ and short maps $\phi_A: X(A) \to Y(A)$.

  This metric optimizes both the graph homomorphism and the matching of the
  attribute spaces $X(A)$ and $Y(A)$. If instead we fix a metric space $\Attr$
  for the attributes, the Hausdorff distance is
  \begin{equation*}
    d_H(X,Y) = \inf_{\phi \in \Graph(X,Y)} \sup_{v \in X(V)}
      d_\Attr(\attr v, \attr \phi_V(v)),
  \end{equation*}
  where the infimum is now only over graph homomorphisms $\phi: X \to Y$.
\end{example}

\begin{example}[Weak Hausdorff metric on graphs]
    \label{ex:weak-hausdorff-graphs}
  Let $X$ and $Y$ be graphs, with discrete metrics on the edge sets, the
  shortest path distances of \cref{ex:shortest-path-distance} on the vertex
  sets, and counting measures on both vertex and edge sets. Then $X$ and $Y$ are
  graphs in $\MM$ and, for $p = 1$, the Hausdorff distance between them is
  \begin{align*}
    d_H(X,Y) =
      \inf_{\substack{\phi_E: X(E) \to Y(E) \\\phi_V: X(V) \to Y(V)}}
      \sum_{\vertex \in \{\src,\tgt\}}
      \sum_{e \in X(E)} d_{Y(V)}(\phi_V(\vertex e), \vertex \phi_E(e)),
  \end{align*}
  where the infimum is over injective maps $\phi_E: X(E) \to Y(E)$ of the edges
  and injective short maps $\phi_V: X(V) \to Y(V)$ of the vertices. If $X$ is
  monomorphic to $Y$, that is, there is an injective graph homomorphism from $X$
  to $Y$, then $d_H(X,Y) = 0$. But, unlike the previous example, we can still
  have $d_H(X,Y) < \infty$ even when $X$ is not homomorphic to $Y$, because the
  edge map is allowed violate the source and target constraints.

  A concrete example is shown in \cref{fig:approximate-graph-morphism}. The
  2-cycle $X$ is plainly not homomorphic to the 4-cycle $Y$, but, due to the
  pictured transformation, the Hausdorff distance from $X$ to $Y$ is
  $d_H(X,Y) = 2$. In general, if $C_n$ is the directed cycle of length $n$, then
  it can be shown that $d_H(C_m,C_n) = \min\{m,n-m\}$ for any $1 \leq m \leq n$.
  This quantity is the length of the shortest path between the endpoints of an
  $m$-path on the $n$-cycle. In the other direction, $d_H(C_m,C_n) = \infty$
  when $m > n$ since there is no longer any injection from $C_m$ to $C_n$.
\end{example}

\begin{figure}
  \centering
  \begin{tikzpicture}[map/.style={densely dotted,->}]
    \node (X) at (0,-0.5) {$X$};
    \node[vertex] (u1) at ($(0,1)+(0:1)$) {};
    \node[vertex] (u2) at ($(0,1)+(180:1)$) {};
    \coordinate (d1) at ($(0,1)+(45:1)$);
    \coordinate (d2) at ($(0,1)+(315:1)$);
    \path[every edge/.style={edge,bend right=90,looseness=1.50}]
      (u1) edge (u2)
      (u2) edge (u1);

    \node (Y) at (5,-0.5) {$Y$};
    \node[vertex] (v1) at ($(5,1)+(45:1)$) {};
    \node[vertex] (v2) at ($(5,1)+(135:1)$) {};
    \node[vertex] (v3) at ($(5,1)+(225:1)$) {};
    \node[vertex] (v4) at ($(5,1)+(315:1)$) {};
    \coordinate (e1) at ($(5,1)+(105:1)$);
    \coordinate (e2) at ($(5,1)+(205:1)$);
    \path[every edge/.style={edge,bend right=45}]
      (v1) edge (v2)
      (v2) edge (v3)
      (v3) edge (v4)
      (v4) edge (v1);

    \draw[map] (u1) -> (v1);
    \draw[map] (u2) -> (v2);
    \draw[map] (d1) -> (e1);
    \draw[map] (d2) -> (e2);
  \end{tikzpicture}
  \caption{An approximate graph homomorphism between two cycles}
  \label{fig:approximate-graph-morphism}
\end{figure}
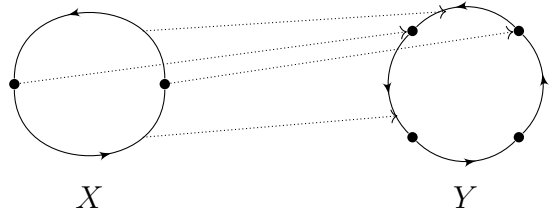

A stream of further examples can be generated by combining the features above,
namely data attributes and metric weakening of the homomorphism constraints, in
graphs or in other $\cat{C}$-sets, such as symmetric graphs, reflexive graphs,
and their higher-dimensional generalizations.

\section{Wasserstein metric on Markov kernels}
\label{sec:wasserstein-markov}

Our goal is now to define a Wasserstein-style metric on metric measure
$\cat{C}$-spaces, thus bringing together the threads of the two preceding
sections. As a first step, we define a metric on Markov kernels, to serve the
same role for the Wasserstein metric as the supremum or $L^p$ metrics do for the
Hausdorff metric. Defining a metric on Markov kernels is more subtle than
defining a metric on functions, and will be the subject of this section.

The Wasserstein metric on Markov kernels generalizes the Wasserstein metric on
probability distributions. Our development parallels that of the classical
metric theory for optimal transport, to be found, for instance, in
\parencite[Chapter 7]{villani2003} or \parencite[Chapter 6]{villani2008}. In
this spirit, we begin with two notions of coupling for Markov
kernels.\footnote{What we call ``products'' of Markov kernels are called
  ``couplings'' in the literature on Markov chains \parencite[Section
  19.1.2]{douc2018}, while our notion of ``coupling'' seems to have no
  established name.}

\begin{definition}[Couplings and products]
  A \emph{coupling} of Markov kernels $M: X \to Y$ and $N: X \to Z$ is any
  Markov kernel $\Pi: X \to Y \times Z$ with marginal $M$ along $Y$ and marginal
  $N$ along $Z$. That is, $\Pi \cdot \proj_Y = M$ and $\Pi \cdot \proj_Z = N$,
  where $\proj_Y: Y \times Z \to Y$ and $\proj_Z: Y \times Z \to Z$ are the
  canonical projection maps.

  A \emph{product} of Markov kernels $M: W \to Y$ and $N: X \to Z$ is any Markov
  kernel $\Pi: W \times X \to Y \times Z$ with marginal $M$ along $W$ and $Y$
  and marginal $N$ along $X$ and $Z$. That is,
  $\Pi \cdot \proj_Y = \proj_W \cdot M$ and
  $\Pi \cdot \proj_Z = \proj_X \cdot N$, where $\proj_W: W \times X \to W$ and
  $\proj_X: W \times X \to X$ are the evident projections.

  The set of all couplings of Markov kernels $M$ and $N$ is denoted by
  $\Coup(M,N)$ and the set of all products by $\Prod(M,N)$.
\end{definition}

To phrase it differently, a Markov kernel $\Pi: X \to Y \times Z$ is a coupling
of $M: X \to Y$ and $N: X \to Z$ if for every $x \in X$, the probability
distribution $\Pi(x)$ is a coupling of $M(x)$ and $N(x)$. Similarly, a Markov
kernel $\Pi: W \times X \to Y \times Z$ is a product of $M: W \to Y$ and
$N: X \to Z$ if for every $w \in W$ and $x \in X$, the distribution $\Pi(w,x)$
is a coupling of $M(w)$ and $N(x)$. In the special case where $W$ and $X$ are
singleton sets, couplings and products of Markov kernels coincide and amount to
couplings of probability measures.

The set of products of Markov kernels $M: W \to Y$ and $N: X \to Z$ is never
empty, because one can always take the \emph{independent product},
\begin{equation*}
  M \otimes N: (w,x) \mapsto M(w) \otimes N(x),
\end{equation*}
given by the pointwise independent product of probability measures. When the
kernels $M$ and $N$ share a common domain $X$, products are extensions of
couplings, because any product $\Pi \in \Prod(M,N)$ gives rise to a coupling
$\Delta_X \cdot \Pi \in \Coup(M,N)$ by pre-composing with the diagonal map
$\Delta_X: x \mapsto (x,x)$. In particular, the set of couplings is never empty
either.

\begin{definition}[Wasserstein metrics on Markov kernels]
    \label{def:wasserstein-markov}
  Let $X$ and $Y$ be metric measure spaces. For any exponent $1 \leq p <
  \infty$, the \emph{Wasserstein metric of order $p$} on Markov kernels $M,N: X
  \to Y$ is
  \begin{equation*}
    W_p(M,N) := \inf_{\Pi \in \Coup(M,N)} \left(
      \int_{X \times Y^2} d_Y(y,y')^p\, \Pi(dy,dy' \given x) \mu_X(dx)
    \right)^{1/p}.
  \end{equation*}
\end{definition}

This metric generalizes two famous constructions in analysis. When the kernels
are deterministic, we recover the $L^p$ metric on functions between metric
measure spaces, reviewed in \cref{sec:hausdorff}. When $X = \{*\}$ is a
singleton set, the kernels $M$ and $N$ can be identified with probability
measures $\mu = M(*)$ and $\nu = N(*)$, and we recover the classical Wasserstein
metric on probability measures,
\begin{equation*}
  W_p(M,N) = W_p(\mu,\nu) = \inf_{\pi \in \Coup(\mu,\nu)} \left(
    \int_{Y^2} d_Y(y,y')^p\, \pi(dy,dy') \right)^{1/p}.
\end{equation*}
The relationships between the base metric and its derived metrics are summarized
in the diagram of \cref{fig:metrics}.

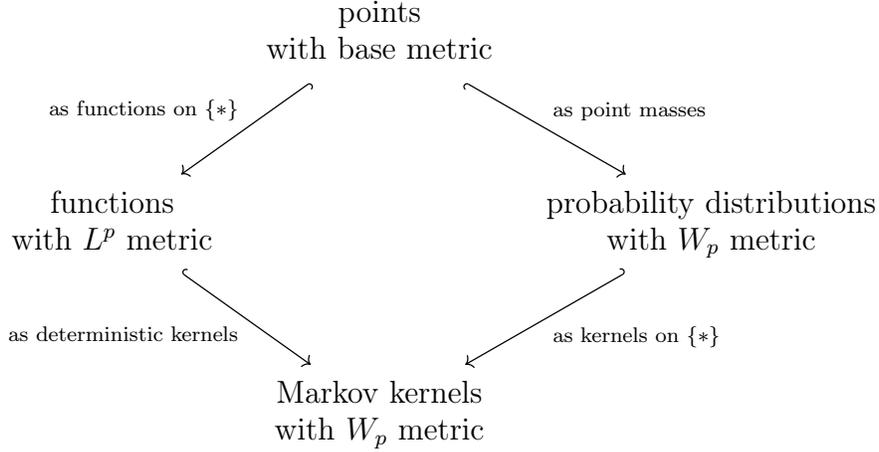
\begin{figure}
  \begin{equation*}
    \begin{tikzcd}[row sep=3em, column sep=0em]
      & \begin{tabular}{c}
          points \\ with base metric
        \end{tabular}
        \ar[hook]{dl}[above left]{\text{as functions on $\{*\}$}}
        \ar[hook']{dr}{\text{as point masses}}
      & \\
      \begin{tabular}{c}
        functions \\ with $L^p$ metric
      \end{tabular}
      \ar[hook]{dr}[below left]{\text{as deterministic kernels}}
      & &
      \begin{tabular}{c}
        probability distributions \\ with $W_p$ metric
      \end{tabular}
      \ar[hook']{dl}{\text{as kernels on $\{*\}$}} \\
      & \begin{tabular}{c}
          Markov kernels \\ with $W_p$ metric
        \end{tabular} &
    \end{tikzcd}
  \end{equation*}
  \caption{Lifting a metric from points to functions, distributions, and Markov
    kernels}
  \label{fig:metrics}
\end{figure}

It is possible to define a Wasserstein metric on Markov kernels in the case
$p=\infty$, generalizing the $L^\infty$ metric on functions and the $W_\infty$
metric on probability measures \parencite{champion2008}, \parencite[Section
3.2]{santambrogio2015}. We do not pursue this case here, as the optimization
problem ceases to be linear in the coupling $\Pi$.

We need to verify that the proposed metric on Markov kernels is actually a
metric. As in the proof for the classical Wasserstein metric, the main property
to verify is the triangle inequality, and crucial step in doing so is
establishing a gluing lemma. Loosely speaking, the \emph{gluing lemma} says that
Markov kernels into $X \times Y$ and $Y \times Z$ that share a common marginal
along $Y$ can be glued along $Y$ to form a Markov kernel into
$X \times Y \times Z$.

\begin{lemma}[Gluing lemma for Markov kernels]
  Let $W$ be a measurable space and $X,Y,Z$ be Polish measurable spaces. Suppose
  $M_X: W \to X$, $M_Y: W \to Y$, and $M_Z: W \to Z$ are Markov kernels, and
  $\Pi_{XY} \in \Coup(M_X,M_Y)$ and $\Pi_{YZ} \in \Coup(M_Y,M_Z)$ are couplings
  thereof. Then there exists a Markov kernel
  $\Pi_{XYZ}: W \to X \times Y \times Z$ with marginals $\Pi_{XY}$ along
  $X \times Y$ and $\Pi_{YZ}$ along $Y \times Z$.
\end{lemma}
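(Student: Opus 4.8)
The plan is to build $\Pi_{XYZ}$ fiberwise over $w \in W$, gluing $\Pi_{XY}(w)$ and $\Pi_{YZ}(w)$ along their common $Y$-marginal $M_Y(w)$ exactly as in the classical gluing lemma, and then to check that this fiberwise construction assembles into a genuine, measurable Markov kernel.

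First I would disintegrate the two couplings along $Y$. Since $X$ is Polish, a parametrized version of the disintegration theorem (\cref{thm:disintegration-measure}) yields a Markov kernel $N_{XY}\colon W \times Y \to X$ such that
\[
  \Pi_{XY}(dx\,dy \given w) \;=\; N_{XY}(dx \given w,y)\, M_Y(dy \given w),
\]
where we use that $\proj_Y \cdot \Pi_{XY} = M_Y$, so that the $Y$-marginal of $\Pi_{XY}(w)$ is $M_Y(w)$ for every $w$. Symmetrically, disintegrating $\Pi_{YZ}$ along $Y$ gives a Markov kernel $N_{YZ}\colon W \times Y \to Z$ with $\Pi_{YZ}(dy\,dz \given w) = N_{YZ}(dz \given w,y)\, M_Y(dy \given w)$.

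Next I would define the candidate $\Pi_{XYZ}\colon W \to X \times Y \times Z$ by making $X$ and $Z$ conditionally independent given $Y$: for $w \in W$ and a measurable set $A \subseteq X \times Y \times Z$,
\[
  \Pi_{XYZ}(w)(A) := \int_Y \int_X \int_Z \mathbf{1}_A(x,y,z)\; N_{YZ}(dz \given w,y)\, N_{XY}(dx \given w,y)\, M_Y(dy \given w).
\]
This is assembled from Markov kernels by the standard operations of taking products and integrating against a kernel, so it is a probability measure in $A$ for each $w$ and measurable in $w$ for each $A$; hence it is a Markov kernel. The two marginal conditions then follow by the Fubini-type rules for kernels: integrating out $z$ uses $\int_Z N_{YZ}(dz \given w,y) = 1$ together with the disintegration identity for $\Pi_{XY}$ to give $\Pi_{XYZ} \cdot \proj_{X \times Y} = \Pi_{XY}$, while integrating out $x$ uses $\int_X N_{XY}(dx \given w,y) = 1$ together with the disintegration identity for $\Pi_{YZ}$ to give $\Pi_{XYZ} \cdot \proj_{Y \times Z} = \Pi_{YZ}$.

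I expect the main obstacle to be measurability in the parameter $w$: producing $N_{XY}$ and $N_{YZ}$ as honest Markov kernels on $W \times Y$, rather than merely as conditional distributions defined separately for each $w$. This is precisely where the Polish hypothesis on $X$, $Y$, $Z$ enters, through a parametrized form of the disintegration theorem; the remainder of the argument is formal kernel algebra. One should also keep in mind that $N_{XY}$ and $N_{YZ}$ are determined only up to $M_Y(w)$-null sets of $y$, but since every occurrence of these kernels --- in the definition of $\Pi_{XYZ}$ and in the marginal computations --- is integrated against $M_Y(dy \given w)$, this ambiguity is harmless.
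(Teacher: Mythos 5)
Your proposal is correct and follows essentially the same route as the paper: disintegrate both couplings along $Y$ to obtain conditional kernels on $W \times Y$, then glue by making $X$ and $Z$ conditionally independent given $Y$. The ``parametrized form of the disintegration theorem'' you correctly identify as the crux is exactly \cref{thm:disintegration-markov} (disintegration of Markov kernels, not just of measures), which is what the paper invokes to produce the kernels $\Pi_{X \given Y}$ and $\Pi_{Z \given Y}$ measurably in $w$.
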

\begin{proof}
  By the disintegration theorem for Markov kernels
  (\cref{thm:disintegration-markov}), there exist Markov kernels
  $\Pi_{X \given Y}: W \times Y \to X$ and $\Pi_{Z \given Y}: W \times Y \to Z$
  such that, using a mild abuse of notation,
  \begin{equation*}
    \Pi_{XY}(dx,dy \given w) = \Pi_{X \given Y}(dx \given y,w)\, M_Y(dy \given w)
  \end{equation*}
  and
  \begin{equation*}
    \Pi_{YZ}(dy,dz \given w) = \Pi_{Z \given Y}(dz \given y,w)\, M_Y(dy \given w).
  \end{equation*}
  Then the Markov kernel $\Pi_{XYZ}: W \to X \times Y \times Z$ defined by
  \begin{equation*}
    \Pi_{XYZ}(dx,dy,dz \given w) := \Pi_{X \given Y}(dx \given y,w)\,
      \Pi_{Z \given Y}(dz \given y,w)\, M_Y(dy \given w)
  \end{equation*}
  satisfies the desired properties.
\end{proof}

By a variant of the usual gluing argument, we show that the Wasserstein metric
on Markov kernels is indeed a metric.

\begin{theorem} \label{thm:wasserstein-markov}
  Let $X$ and $Y$ be metric measure spaces. For any $1 \leq p < \infty$, the
  Wasserstein metric of order $p$ on Markov kernels $X \to Y$ is a metric.

  Moreover, if $Y$ is a classical metric space, then the Wasserstein metric is
  also classical when restricted to equivalence classes of Markov kernels with
  finite moments of order $p$, i.e., to Markov kernels $M: X \to Y$ such that
  \begin{equation*}
    \int_{X \times Y} d_Y(y_0, y)^p\, M(dy \given x) \mu_X(dx) < \infty,
  \end{equation*}
  for some (and hence any) $y_0 \in Y$, where we regard Markov kernels $M$ and
  $M'$ as equivalent if $M(x) = M'(x)$ for $\mu_X$-almost every $x \in X$.
\end{theorem}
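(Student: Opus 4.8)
The plan is to follow the classical proof that the Wasserstein distance on probability measures is a metric, with the gluing lemma for Markov kernels proved above playing the role of the classical gluing lemma, and then to treat the extra ``classical'' axioms separately. The identity law is immediate: for $M\colon X\to Y$ the diagonal coupling $\Pi := M\cdot\MarkovFun(\Delta_Y)\in\Coup(M,M)$, where $\Delta_Y\colon y\mapsto(y,y)$, has cost $\int_{X\times Y}d_Y(y,y)^p\,M(dy\given x)\,\mu_X(dx)=0$, so $W_p(M,M)=0$. For the triangle inequality I would take $M,N,P\colon X\to Y$, assume $W_p(M,N)$ and $W_p(N,P)$ finite (otherwise the inequality is trivial), fix $\varepsilon>0$, and choose $\Pi_{MN}\in\Coup(M,N)$ and $\Pi_{NP}\in\Coup(N,P)$ with costs at most $W_p(M,N)+\varepsilon$ and $W_p(N,P)+\varepsilon$. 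Applying the gluing lemma with $W=X$ and all three Polish factors equal to $Y$ produces a Markov kernel $\Pi\colon X\to Y\times Y\times Y$ whose $(1,2)$-marginal is $\Pi_{MN}$ and whose $(2,3)$-marginal is $\Pi_{NP}$, so its $(1,3)$-marginal lies in $\Coup(M,P)$. Bounding $d_Y(y,y'')\le d_Y(y,y')+d_Y(y',y'')$ pointwise and applying Minkowski's inequality in $L^p$ of the measure $\Pi(dy,dy',dy''\given x)\,\mu_X(dx)$ on $X\times Y^3$ then yields $W_p(M,P)\le W_p(M,N)+W_p(N,P)+2\varepsilon$, and letting $\varepsilon\to 0$ finishes this part.

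Among the ``classical'' axioms I would dispatch two quickly. Symmetry of $W_p$ follows from symmetry of $d_Y$ together with the bijection $\Coup(M,N)\cong\Coup(N,M)$ given by swapping the two $Y$-coordinates; and $W_p$ is insensitive to altering $M$ or $N$ on a $\mu_X$-null set, so it descends to equivalence classes. Finiteness on the class of kernels with finite $p$-th moment comes from estimating the cost of the pointwise-independent coupling $x\mapsto M(x)\otimes N(x)$ via $d_Y(y,y')^p\le 2^{p-1}\bigl(d_Y(y,y_0)^p+d_Y(y_0,y')^p\bigr)$ and integrating against $\mu_X$.

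The substantive point is positive definiteness, which I would argue fiberwise. Suppose $W_p(M,N)=0$ and choose couplings $\Pi_n\in\Coup(M,N)$ with cost functions $c_n(x):=\int_{Y^2}d_Y(y,y')^p\,\Pi_n(dy,dy'\given x)$ satisfying $\int_X c_n\,d\mu_X\to 0$. Then $c_n\to 0$ in $L^1(\mu_X)$, so after passing to a subsequence $c_n(x)\to 0$ for $\mu_X$-almost every $x$. For such an $x$ the measures $\Pi_n(x)$ are couplings of the probability measures $M(x)$ and $N(x)$ whose transport cost tends to $0$, and since $Y$ is Polish their common marginals are tight, so some further subsequence converges weakly to a coupling $\pi$ of $M(x)$ and $N(x)$; lower semicontinuity of $d_Y$, hence of $d_Y^p$, gives $\int d_Y^p\,d\pi=0$, so $d_Y(y,y')=0$ and therefore $y=y'$ for $\pi$-almost every $(y,y')$, whence $\pi$ is carried by the diagonal and $M(x)=N(x)$. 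Thus $M=N$ $\mu_X$-almost everywhere.

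I expect this last step to be the main obstacle: because the Lawvere metric $d_Y$ need not metrize the Polish topology and the infimum defining $W_p$ need not be attained, I cannot simply invoke the classical statement that the Wasserstein distance separates measures, and must instead run the tightness and lower-semicontinuity argument by hand, descending to $\mu_X$-almost-every $x$ through an $L^1$-convergent subsequence. Everything else — the diagonal coupling, the gluing construction, Minkowski's inequality, and the independent-coupling bound — should be routine once the Markov-kernel gluing lemma is in hand.
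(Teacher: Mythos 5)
Your proof is correct, and its overall skeleton matches the paper's: the diagonal coupling for the identity law, the gluing lemma followed by the pointwise triangle inequality and Minkowski's inequality in $L^p(\mu_X \otimes \Pi)$ for the triangle inequality, coordinate-swapping for symmetry, and the independent coupling for finiteness. The one place you genuinely diverge is positive definiteness. The paper first \emph{assumes} the infimum in $W_p$ is attained, extracts a single optimal coupling $\Pi$ with zero cost, and concludes that $\Pi(x)$ is diagonal for $\mu_X$-a.e.\ $x$; the attainment is then justified separately (\cref{prop:wasserstein-existence}) by appeal to the measurable-selection theorem for optimal couplings of Markov kernels (\cref{thm:optimal-coupling-markov}), which is the nontrivial input. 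You instead take a minimizing sequence of couplings, pass to a subsequence along which the fiberwise costs $c_n(x)$ tend to zero $\mu_X$-a.e.\ (valid for $\sigma$-finite $\mu_X$ since $\sum_k \|c_{n_k}\|_{L^1} < \infty$ forces a.e.\ convergence), and for each such $x$ run Prokhorov tightness of $\Coup(M(x),N(x))$ plus lower semicontinuity of $d_Y^p$ to produce a zero-cost limit coupling concentrated on the diagonal. This is a legitimate and more self-contained route: it avoids the measurable-selection machinery entirely, at the cost of a slightly longer compactness argument, and it correctly exploits the paper's standing hypothesis that $d_Y$ is lower semicontinuous for the Polish topology even when it does not metrize it. Note only that the paper gets something extra out of the attainment result that your argument does not recover, namely the fiberwise formula $W_p(M,N)^p = \int_X W_p(M(x),N(x))^p\,\mu_X(dx)$ used later in \cref{sec:wasserstein}; but that is not needed for the theorem as stated.
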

\begin{proof}
  We prove the triangle inequality first. Let $M_1,M_2,M_3: X \to Y$ be Markov
  kernels and let $\Pi_{12} \in \Coup(M_1,M_2)$ and
  $\Pi_{23} \in \Coup(M_2,M_3)$ be couplings thereof. By the gluing lemma, there
  exists a Markov kernel $\Pi: X \to Y \times Y \times Y$ such that
  $\Pi \cdot \proj_{12} = \Pi_{12}$ and $\Pi \cdot \proj_{23} = \Pi_{23}$, where
  $\proj_{ij}: Y^3 \to Y^2$, $i < j$, are the evident projections. Forming the
  coupling $\Pi_{13} := \Pi \cdot \proj_{13} \in \Coup(M_1,M_3)$, we estimate
  \begin{align*}
    W_p(M_1,M_3)
      &\leq \left(\int d_Y(y_1,y_3)^p\,
        \Pi_{13}(dy_1,dy_3 \given x) \mu_X(dx) \right)^{1/p} \\
      &= \left(\int d_Y(y_1,y_3)^p\,
        \Pi(dy_1,dy_2,dy_3 \given x) \mu_X(dx) \right)^{1/p} \\
      &\leq \left(\int (d_Y(y_1,y_2) + d_Y(y_2,y_3))^p\,
        \Pi(dy_1,dy_2,dy_3 \given x) \mu_X(dx) \right)^{1/p} \\
      &\leq \left(\int d_Y(y_1,y_2)^p\,
        \Pi(dy_1,dy_2,dy_3 \given x) \mu_X(dx) \right)^{1/p} \\
        &\qquad + \left(\int d_Y(y_2,y_3)^p\,
        \Pi(dy_1,dy_2,dy_3 \given x) \mu_X(dx) \right)^{1/p} \\
     &= \left(\int d_Y(y_1,y_2)^p\,
        \Pi_{12}(dy_1,dy_2 \given x) \mu_X(dx) \right)^{1/p} \\
        &\qquad + \left(\int d_Y(y_2,y_3)^p\,
        \Pi_{23}(dy_2,dy_3 \given x) \mu_X(dx) \right)^{1/p},
  \end{align*}
  where we apply the triangle inequality in the second inequality and
  Minkowski's inequality on $L^p(X \times Y^3, \mu_X \otimes \Pi)$ in the third
  inequality. Since the resulting inequality holds for any couplings $\Pi_{12}$
  and $\Pi_{23}$, we conclude that
  \begin{equation*}
    W_p(M_1,M_3) \leq W_p(M_1,M_2) + W_P(M_2,M_3).
  \end{equation*}
  Moreover, for any Markov kernel $M: X \to Y$, the deterministic coupling $M
  \cdot \Delta_Y$, where $\Delta_Y: Y \to Y \times Y$ is the diagonal map,
  yields
  \begin{equation*}
    W_p(M,M)
      \leq \left( \int d_Y(y,y)^p\, M(dy \given x) \mu_X(dx) \right)^{1/p}
      = 0,
    \end{equation*}
  proving that $W_p(M,M) = 0$. Thus $W_p$ is a metric in generalized sense.

  For the second part of the theorem, suppose $Y$ is a classical metric space.
  If Markov kernels $M$ and $M'$ satisfy $W_p(M,M') = 0$, then, assuming for the
  moment that the infimum is achieved, there exists a coupling
  $\Pi \in \Coup(M,M')$ such that
  \begin{equation*}
    \int_{X \times Y \times Y} d_Y(y,y')^p\, \Pi(dy,dy' \given x) \mu_X(dx) = 0.
  \end{equation*}
  Thus, for $\mu_X$-almost every $x \in X$,
  \begin{equation*}
    \int_{Y \times Y} d_Y(y,y')^p\, \Pi(dy,dy' \given x) = 0.
  \end{equation*}
  For each such $x$, since the metric $d_Y$ is positive definite, $\Pi(x)$ is
  concentrated on the diagonal. Thus $\Pi(x) = \nu \cdot \Delta_Y$ for some
  probability measure $\nu$ on $Y$ and hence $\Pi(x) \in \Coup(\nu,\nu)$. But,
  by assumption, $\Pi(x) \in \Coup(M(x),M'(x))$, and so $M(x) = \nu = M'(x)$.
  Thus $M(x) = M'(x)$ for $\mu_X$-a.e. $x \in X$, and we conclude that $W_p$ is
  positive definite. Next, $W_p$ is symmetric since the base metric $d_Y$ is.
  Finally, by taking the independent coupling and using Minkowski's inequality
  again, it is easy to show that $W_p$ is finite on Markov kernels with finite
  moments of order $p$.
\end{proof}

In the second half of the proof, we assumed the following result.

\begin{proposition} \label{prop:wasserstein-existence}
  The infimum in the Wasserstein metric on Markov kernels is attained. That is,
  for any Markov kernels $M,N: X \to Y$ on mm spaces $X,Y$, there exists a
  coupling $\Pi: X \to Y \times Y$ of $M$ and $N$ such that
  \begin{equation*}
    W_p(M,N)^p = \int_{X \times Y \times Y}
      d_Y(y,y')^p\, \Pi(dy,dy' \given x) \mu_X(dx).
  \end{equation*}
\end{proposition}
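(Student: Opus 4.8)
The plan is to reduce the existence of an optimal coupling to the classical existence theorem for optimal transport between probability measures, applied fiberwise over $x \in X$, and then to patch the fiberwise optima together into a genuine Markov kernel using a measurable selection argument. First I would recall that for each fixed $x \in X$, the kernels $M$ and $N$ specify probability measures $M(x), N(x) \in \ProbSpace(Y)$, and that $\Coup(M,N)$ is precisely the set of Markov kernels $\Pi: X \to Y \times Y$ such that $\Pi(x) \in \Coup(M(x),N(x))$ for all $x$. By the classical theorem (e.g.\ \parencite[Theorem 4.1]{villani2008}, valid since $Y$ is Polish and $d_Y$ is lower-semicontinuous), for each $x$ the infimum $\inf_{\pi \in \Coup(M(x),N(x))} \int_{Y^2} d_Y(y,y')^p\, \pi(dy,dy')$ is attained by some $\pi_x^\star$. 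The function $x \mapsto \inf_{\pi \in \Coup(M(x),N(x))} \int d_Y^p\, d\pi$ is the pointwise optimal transport cost, which I would denote $c(x)$.

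The main obstacle is \emph{measurability}: having an optimal $\pi_x^\star$ for each $x$ is not enough; we need $x \mapsto \pi_x^\star$ to be a measurable map $X \to \ProbSpace(Y^2)$, so that it defines a Markov kernel. I would handle this with a measurable selection theorem. Consider the set-valued map $x \rightrightarrows \Coup(M(x),N(x)) \subseteq \ProbSpace(Y^2)$; since $Y$ is Polish, $\ProbSpace(Y^2)$ is Polish under the weak topology, and this correspondence has closed (indeed weakly compact, when the marginals have the relevant tightness) values and is measurable in $x$ because $M$ and $N$ are Markov kernels, hence $x \mapsto M(x)$ and $x \mapsto N(x)$ are measurable. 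The cost functional $\pi \mapsto \int_{Y^2} d_Y(y,y')^p\, \pi(dy,dy')$ is lower-semicontinuous in $\pi$ (as $d_Y^p$ is lower-semicontinuous and nonnegative) and jointly measurable in $(x,\pi)$. Therefore the graph of the argmin correspondence, $\{(x,\pi) : \pi \in \Coup(M(x),N(x)),\ \int d_Y^p\, d\pi = c(x)\}$, is a measurable subset of $X \times \ProbSpace(Y^2)$ with nonempty values, and the Kuratowski--Ryll-Nardzewski measurable selection theorem (or the Jankov--von Neumann theorem, which suffices and requires only analytic-set structure) furnishes a measurable selection $x \mapsto \Pi(x) =: \pi_x^\star$. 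This $\Pi$ is the desired Markov kernel; it lies in $\Coup(M,N)$ by construction.

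It then remains to verify that $\Pi$ realizes the infimum globally. For any competitor $\Pi' \in \Coup(M,N)$ we have, fiberwise, $\int_{Y^2} d_Y^p\, \Pi'(dy,dy'\given x) \geq c(x) = \int_{Y^2} d_Y^p\, \Pi(dy,dy'\given x)$ for every $x$; integrating against $\mu_X$ gives
\begin{equation*}
  \int_{X \times Y^2} d_Y(y,y')^p\, \Pi'(dy,dy'\given x)\, \mu_X(dx)
    \geq \int_{X \times Y^2} d_Y(y,y')^p\, \Pi(dy,dy'\given x)\, \mu_X(dx),
\end{equation*}
and taking the infimum over $\Pi'$ shows $W_p(M,N)^p = \int_{X \times Y^2} d_Y^p\, \Pi(dy,dy'\given x)\, \mu_X(dx)$, as claimed. (If $W_p(M,N) = \infty$ the statement is vacuous, so one may assume it is finite; then $c(x) < \infty$ for $\mu_X$-a.e.\ $x$, which does not affect the selection argument since one may redefine $\Pi(x)$ arbitrarily, e.g.\ as the independent coupling, on the null set where $c(x) = \infty$.) The one technical point to be careful about is that $\mu_X$ is only $\sigma$-finite rather than finite, but since the integrand is nonnegative and the fiberwise inequality holds pointwise, monotone integration goes through unchanged.
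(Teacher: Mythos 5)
Your proof is correct and is essentially the paper's argument: the paper simply invokes the measurable-selection theorem for optimal couplings of Markov kernels (\cref{thm:optimal-coupling-markov}, cited from the literature) to get a coupling $\Pi$ that is fiberwise optimal at every $x$, and then integrates against $\mu_X$ exactly as you do. The only difference is that you sketch the proof of that selection theorem (fiberwise existence via the classical result plus Kuratowski--Ryll-Nardzewski/Jankov--von Neumann selection) where the paper treats it as a black box.
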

\begin{proof}
  According to a known existence theorem for optimal couplings
  (\cref{thm:optimal-coupling-markov}), the infimum can even be achieved
  simultaneously at every point $x$. That is, there exists a coupling
  $\Pi \in \Coup(M,N)$ such that for every $x \in X$,
  \begin{equation*}
    W_p(M(x),N(x))^p = \int_{Y \times Y} d_Y(y,y')^p\, \Pi(dy,dy' \given x).
    \qedhere
  \end{equation*}
\end{proof}

The proof even shows that the Wasserstein metric on Markov kernels can be
written in terms of the Wasserstein metric on probability measures as
\begin{equation*}
  W_p(M,N) = \left( \int_X W_p(M(x),N(x))^p\, \mu_X(dx) \right)^{1/p}.
\end{equation*}
Thus, if we view a Markov kernel $X \to Y$ as a function $X \to \ProbSpace(Y)$
of metric spaces, the Wasserstein metric on Markov kernels reduces to the
familiar $L^p$ metric. However, this result depends on the existence theorem for
optimal couplings (\cref{thm:optimal-coupling-markov}), the proof of which is
non-trivial. Wherever possible we prefer to work with the original
\cref{def:wasserstein-markov} in terms of couplings of Markov kernels.

When at least one of the kernels is deterministic, the Wasserstein metric has a
simple, closed-form expression.

\begin{proposition}[Wasserstein metric on deterministic kernels]
    \label{prop:wasserstein-deterministic}
  Let $X,Y,Z$ be metric measure spaces. For any Markov kernel $M: X \to Y$ and
  measurable functions $f: X \to Z$ and $g: Y \to Z$,
  \begin{equation*}
    W_p(f, Mg) = \left( \int_{X \times Y}
      d_Z(f(x),g(y))^p\, M(dy \given x) \mu_X(dx) \right)^{1/p}.
  \end{equation*}
  In particular, for any measurable functions $f,g: X \to Y$,
  \begin{equation*}
    W_p(f,g) = \left( \int_X d_Y(f(x),g(x))^p\, \mu_X(dx) \right)^{1/p}
      = d_{L^p}(f,g).
  \end{equation*}
\end{proposition}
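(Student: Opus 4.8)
The plan is to exploit the fact that one of the two kernels being compared, namely $\MarkovFun(f)$, is deterministic. This forces the set of couplings $\Coup(\MarkovFun(f), M\cdot\MarkovFun(g))$ to be a singleton, so the infimum defining $W_p$ is over a one-point set and evaluates in closed form with no genuine optimization to perform.

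First I would unwind the left-hand side: $W_p(f, Mg)$ is the Wasserstein metric of order $p$ of \cref{def:wasserstein-markov} applied to the Markov kernels $\MarkovFun(f): X \to Z$ and $Mg := M\cdot\MarkovFun(g): X \to Z$. Since $\MarkovFun(f)(x) = \delta_{f(x)}$, I claim there is exactly one coupling $\Pi \in \Coup(\MarkovFun(f), Mg)$, namely the kernel $\Pi: X \to Z \times Z$ with $\Pi(dz, dz' \given x) = \delta_{f(x)}(dz)\, Mg(dz' \given x)$. Indeed, for each fixed $x$, any probability measure on $Z \times Z$ whose first marginal is the point mass $\delta_{f(x)}$ is concentrated on $\{f(x)\}\times Z$ and hence equals $\delta_{f(x)} \otimes \pi_2$ with $\pi_2$ its second marginal; the two coupling conditions then pin down $\pi_2 = Mg(x)$, and measurability of $x \mapsto \delta_{f(x)} \otimes Mg(x)$ is immediate from that of $f$ and $Mg$. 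Incidentally, this already shows the infimum in $W_p(f,Mg)$ is attained, so there is no need to invoke \cref{prop:wasserstein-existence} here.

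Next I would substitute this unique $\Pi$ into the defining integral. The inner integral over $Z \times Z$ against $\delta_{f(x)}(dz)\, Mg(dz' \given x)$ collapses to $\int_Z d_Z(f(x), z')^p\, Mg(dz' \given x)$. Because, by the definition of composition of Markov kernels, $Mg(\cdot \given x)$ is the pushforward of $M(\cdot \given x)$ along $g$, the change-of-variables formula rewrites this as $\int_Y d_Z(f(x), g(y))^p\, M(dy \given x)$. Integrating over $x$ against $\mu_X$ — legitimate since the integrand is nonnegative and jointly measurable, so Tonelli applies — and taking $p$-th roots gives the stated identity for $W_p(f, Mg)$.

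Finally, the in-particular statement for $f, g: X \to Y$ follows by specializing: take $Z = Y$ and let $M = \MarkovFun(1_X)$ be the identity kernel on $X$, so that $Mg = \MarkovFun(g)$ and $M(dy \given x) = \delta_x(dy)$. The general formula then reduces to $\bigl(\int_X d_Y(f(x), g(x))^p\, \mu_X(dx)\bigr)^{1/p}$, which is $d_{L^p}(f,g)$ by the definition of the $L^p$ metric in \cref{sec:hausdorff}. The only point in the whole argument that demands any care is the uniqueness of a coupling with a Dirac marginal, together with the attendant measurability bookkeeping for the glued kernel; every other step is a routine manipulation of integrals.
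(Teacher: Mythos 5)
Your proof is correct and follows essentially the same route as the paper's: both rest on the observation that a coupling with a Dirac first marginal is unique (the independent product $\delta_{f(x)} \otimes Mg(x)$), then reduce the defining integral by Tonelli and the pushforward identity for $M \cdot \MarkovFun(g)$. You simply spell out the uniqueness and change-of-variables steps that the paper leaves implicit.
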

\begin{proof}
  To prove the first statement, notice that $f$ and $M \cdot g$ have a single
  coupling, at once deterministic and independent. By Tonelli's theorem,
  \begin{align*}
    W_p(f, Mg)^p
      &= \int_{X \times Y \times Z \times Z} d_Z(z,z')^p\,
        \delta_{f(x)}(dz) \delta_{g(y)}(dz') M(dy \given x) \mu_X(dx) \\
      &= \int_{X \times Y} d_Z(f(x),g(y))^p \, M(dy \given x) \mu_X(dx).
  \end{align*}
  The second statement follows from the first by taking $X = Y$ and $M = 1_X$.
\end{proof}

\section{Wasserstein metric on metric measure \texorpdfstring{$\cat{C}$}{C}-spaces}
\label{sec:wasserstein}

We are at last ready to construct a Wasserstein-style metric on metric measure
$\cat{C}$-spaces, combining the general metric theory for $\cat{C}$-sets with
the Wasserstein metric on Markov kernels.

Let $\MMarkov$ be the category with metric measures spaces as objects and Markov
kernels as morphisms. In \cref{sec:markov}, we identified measurable functions
with deterministic Markov kernels, obtaining an embedding functor
$\MarkovFun: \Meas \to \Markov$ and thus a relaxation of the $\cat{C}$-set
homomorphism problem. In exactly the same way, the category $\MM$ of metric
measure spaces is functorially embedded inside $\MMarkov$. We denote this
embedding also by $\MarkovFun: \MM \to \MMarkov$. Just as we relaxed the
$\cat{C}$-set homomorphism problem, so will we relax the Hausdorff metric on mm
$\cat{C}$-spaces.

As a first step, we make $\MMarkov$ into a metric category compatible with the
$L^p$ metrics. By \cref{thm:wasserstein-markov}, $\MMarkov$ is a metric category
under the Wasserstein metric of order $p$, for any $1 \leq p < \infty$.
Furthermore, by \cref{prop:wasserstein-deterministic}, this metric agrees with
the corresponding $L^p$ metric on deterministic Markov kernels. Thus, the
embedding $\MarkovFun: \MM \to \MMarkov$ is an isometry of metric categories.

In \cref{sec:hausdorff}, we characterized the short morphisms of $\MM$ under its
$L^p$ metric. The next proposition extends this characterization to $\MMarkov$,
formally reducing to \cref{prop:short-mm} when all the morphisms are
deterministic Markov kernels. Consequently, the isometric embedding functor
$\MarkovFun: \MM \to \MMarkov$ restricts to an embedding
$\Short(\MM) \to \Short(\MMarkov)$ of short morphisms.

\begin{proposition}[Short Markov kernels] \label{prop:short-mmarkov}
  Let $W,X,Y,Z$ be metric measure spaces and let $M: X \to Y$ be a Markov
  kernel.
  \begin{enumerate}[(a)]
  \item $W_p(MN,MN') \leq W_p(N,N')$ for all Markov kernels $N,N': Y \to Z$ if
    and only if $M$ is \emph{measure-decreasing},\footnotemark{}
    \begin{equation*}
      \mu_X M \leq \mu_Y.
    \end{equation*}
  \item $W_p(PM,P'M) \leq W_p(P,P')$ for all Markov kernels $P,P': W \to X$ if
    and only if $M$ is \emph{distance-decreasing} of order $p$, i.e., there
    exists a product $\Pi$ of $M$ with itself such that
    \begin{equation*}
      \left(\int_{Y \times Y} d_Y(y,y')^p\, \Pi(dy,dy' \given x,x')\right)^{1/p}
        \leq d_X(x,x'),
      \qquad
      \forall x,x' \in X.
    \end{equation*}
  \end{enumerate}
  Consequently, a Markov kernel is a short morphism if and only if it is
  distance-decreasing and measure-decreasing. $\Short(\MMarkov)$ is the category
  of mm spaces and distance- and measure-decreasing Markov kernels.
\end{proposition}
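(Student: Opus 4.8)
The proposition has two biconditional claims, (a) about pre-composition and (b) about post-composition, each of which should be proved by a forward direction (the stated condition implies the distance bound) and a converse (the distance bound forces the condition). I would exploit the fact that the Wasserstein metric on Markov kernels admits a pointwise description: by \cref{prop:wasserstein-existence}, $W_p(M,N)^p = \int_X W_p(M(x),N(x))^p\,\mu_X(dx)$. This reduces most of the work to the fiberwise behavior of the classical Wasserstein distance on $\ProbSpace(Y)$, together with how measures transform under $\mu_X M$.

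\textbf{Part (a): pre-composition with $M\colon X\to Y$.} For the forward direction, suppose $\mu_X M \le \mu_Y$. Given $N,N'\colon Y\to Z$ and an optimal coupling $\Pi_{YZ^2}\in\Coup(N,N')$ achieving $W_p(N,N')$ pointwise (again \cref{prop:wasserstein-existence}), the composite $M\Pi_{YZ^2}\colon X\to Z\times Z$ is a coupling of $MN$ and $MN'$. Then
\begin{equation*}
  W_p(MN,MN')^p \le \int_{X}\!\int_{Z^2} d_Z(z,z')^p\,(M\Pi_{YZ^2})(dz,dz'\given x)\,\mu_X(dx)
    = \int_{Y}\!\int_{Z^2} d_Z(z,z')^p\,\Pi_{YZ^2}(dz,dz'\given y)\,(\mu_X M)(dy),
\end{equation*}
and since the inner integral is nonnegative and $\mu_X M \le \mu_Y$, this is $\le \int_Y W_p(N(y),N'(y))^p\,\mu_Y(dy) = W_p(N,N')^p$. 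For the converse, if $M$ is not measure-decreasing pick $B\in\Sigma_Y$ with $(\mu_X M)(B) > \mu_Y(B)$ and imitate the construction in \cref{prop:short-mm}: take a two-point classical metric space $Z = \{z,z'\}$, let $N$ be constant at $z$ and $N'$ the deterministic kernel equal to $z'$ on $B$ and $z$ off $B$; one checks $W_p(MN,MN')^p = (\mu_X M)(B)\,d_Z(z,z')^p > \mu_Y(B)\,d_Z(z,z')^p = W_p(N,N')^p$.

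\textbf{Part (b): post-composition with $M\colon X\to Y$.} For the forward direction, assume there is a product $\Pi$ of $M$ with itself that is $p$-distance-decreasing. Given $P,P'\colon W\to X$ and an optimal coupling $\Sigma\in\Coup(P,P')$, I would glue $\Sigma$ (a kernel $W\to X\times X$) with $\Pi$ (a kernel $X\times X\to Y\times Y$) to produce a coupling of $PM$ and $P'M$, namely $\Sigma\cdot\Pi\colon W\to Y\times Y$ — this is a legitimate composite of Markov kernels and has the right marginals because $\Pi$ has marginals $M$ along each factor. Then
\begin{equation*}
  W_p(PM,P'M)^p \le \int_W\!\int_{Y^2} d_Y(y,y')^p\,(\Sigma\Pi)(dy,dy'\given w)\,\mu_W(dw),
\end{equation*}
and evaluating the inner double integral by first integrating out $\Pi$ against the pair $(x,x')$ and applying the $p$-distance-decreasing bound pointwise gives $\le \int_W\!\int_{X^2} d_X(x,x')^p\,\Sigma(dx,dx'\given w)\,\mu_W(dw) = W_p(P,P')^p$. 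For the converse, specialize $W = I = \{*\}$ with the singleton probability measure and let $P = \delta_x$, $P' = \delta_{x'}$ be the point-mass kernels $e_x,e_{x'}$ as in \cref{prop:short-metric}; then $P M = M(x)$ and $P'M = M(x')$ as probability measures, $W_p(P,P') = d_X(x,x')$, and the hypothesis $W_p(PM,P'M)\le W_p(P,P')$ becomes $W_p(M(x),M(x'))\le d_X(x,x')$ for all $x,x'$. An optimal coupling of $M(x)$ and $M(x')$ for each pair $(x,x')$ — which exists by the classical optimal-transport existence theorem, or by the measurable-selection version \cref{thm:optimal-coupling-markov} — assembles into a product $\Pi$ of $M$ with itself witnessing the $p$-distance-decreasing condition.

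\textbf{Combining and the main obstacle.} The final sentence follows immediately: a Markov kernel is short iff it does not increase $W_p$-distances under both pre- and post-composition, which by (a) and (b) is exactly being measure-decreasing and $p$-distance-decreasing; hence $\Short(\MMarkov)$ has these kernels as morphisms. The step I expect to require the most care is the gluing in part (b): I must make sure that the composite $\Sigma\cdot\Pi$ is well-defined as a Markov kernel (measurability of $\Pi$ as a function on the Polish space $X\times X$) and that its two $Y$-marginals are genuinely $PM$ and $P'M$ rather than something twisted — this uses the defining marginal properties of a product of kernels and ordinary Fubini/Tonelli bookkeeping, but it is the one place where the interaction of the two kernels, as opposed to fiberwise classical optimal transport, actually does the work. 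A secondary subtlety is the converse of (b): one should note that the pointwise-optimal $\Pi$ need not be measurable in $(x,x')$ a priori, so invoking \cref{thm:optimal-coupling-markov} (or remarking that the product condition only asks for the existence of such a $\Pi$, with no measurability constraint stated) is what closes the gap cleanly.
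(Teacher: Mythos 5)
Your proposal is correct and follows essentially the same route as the paper's proof: composing an arbitrary (or optimal) coupling of $N,N'$ with $M$ for the forward direction of (a), the two-point-space counterexample for its converse, composing a coupling of $P,P'$ with a product $\Pi\in\Prod(M,M)$ for the forward direction of (b), and generalized elements $e_x,e_{x'}$ plus \cref{thm:optimal-coupling-markov} for its converse. The only cosmetic difference is that you invoke pointwise-optimal couplings via \cref{prop:wasserstein-existence} where the paper estimates against an arbitrary coupling and infimizes at the end; both are valid.
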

\footnotetext{When the measure spaces coincide, that is, $X = Y$ and $\mu_X =
  \mu_Y$, the measure $\mu_X$ is also called a \emph{subinvariant measure} of
  $M$ \parencite[Definition 1.4.1]{douc2018}.}
\begin{proof}
  Towards proving part (a), suppose $M: X \to Y$ is measure-decreasing. For any
  coupling $\Pi: Y \to Z \times Z$ of $N$ and $N'$, the composite
  $M \cdot \Pi: X \to Z \times Z$ is a coupling of $M \cdot N$ and $M\cdot N'$.
  Therefore,
  \begin{align*}
    W_p(MN,MN')^p
      &\leq \int d_Z(z,z')^p\, \Pi(dz,dz' \given y) M(dy \given x) \mu_X(dx) \\
      &= \int d_Z(z,z')^p\, \Pi(dz,dz' \given y) (\mu_X M)(dy) \\
      &\leq \int d_Z(z,z')^p\, \Pi(dz,dz' \given y) \mu_Y(dy).
  \end{align*}
  Since $\Pi \in \Coup(N,N')$ is arbitrary, we have
  $W_p(MN,MN')^p \leq W_p(N,N')^p$.

  For the converse direction, suppose that $M: X \to Y$ is not
  measure-decreasing. Choose a set $Y \in \Sigma_Y$ such that
  $\mu_X M(B) > \mu_Y(B)$. Let $Z$ be a classical metric space with at least two
  points $z$ and $z'$, let $g: Y \to Z$ be the constant function at $z$, and let
  $g': Y \to Z$ be the function equal to $z'$ on $B$ and equal to $z$ outside of
  $B$. The composite $Mg$ is also constant, so by
  \cref{prop:wasserstein-deterministic} we have
  \begin{equation*}
    W_p(Mg,Mg')^p
      = d_Z(z,z')^p \cdot \mu_X M(B)
      > d_Z(z,z')^p \cdot \mu_Y(B)
      = W_p(g,g')^p.
  \end{equation*}

  To prove part (b), suppose $M: X \to Y$ is distance-decreasing, and let $\Pi$
  be a product of $M$ with itself that attains the bound. For any coupling
  $\Lambda: W \to X \times X$ of $P$ and $P'$, the composite
  $\Lambda \cdot \Pi: W \to Y \times Y$ is a coupling of $P \cdot M$ and
  $P' \cdot M$. Therefore,
  \begin{align*}
    W_p(PM,P'M)^p
      &\leq \int_{W \times X} \underbrace{\int_{Y \times Y} d_Y(y,y')^p\,
        \Pi(dy,dy' \given x,x')}_{\leq d_X(x,x')^p}
        \Lambda(dx,dx' \given w) \mu_W(dw) \\
      &\leq \int_{W \times X} d_X(x,x')^p\, \Lambda(dx,x' \given w) \mu_W(dw).
  \end{align*}
  Since $\Lambda \in \Coup(P,P')$ is arbitrary, we obtain
  $W_p(PM,P'M)^p \leq W_p(P,P')^p$.

  Conversely, suppose this inequality holds for all Markov kernels $P,P'$. As in
  the proofs of \cref{prop:short-metric,prop:short-mm}, let $W = I = \{*\}$ be
  the singleton probability space and let $e_x: I \to X$ denote the generalized
  element at $x \in X$. For any $x,x' \in X$, take $P = e_x$ and $P' = e_{x'}$
  to obtain $W_p(M(x'),M(x')) \leq d_X(x,x')$. Using
  \cref{thm:optimal-coupling-markov}, we can construct $\Pi \in \Prod(M,M)$ such
  that
  \begin{equation*}
    \left(\int_{Y \times Y} d_Y(y,y')^p\, \Pi(dy,dy' \given x,x')\right)^{1/p}
      = W_p(M(x),M(x')), \qquad
    \forall x,x' \in X.
  \end{equation*}
  Conclude that $M$ is distance-decreasing.
\end{proof}

The Wasserstein metric on mm $\cat{C}$-spaces is the Hausdorff metric
(\cref{def:hausdorff-metric}) on $\cat{C}$-sets in the metric category
$\MMarkov$. In concrete terms, the definition is:

\begin{definition}[Wasserstein metric on mm $\cat{C}$-spaces]
    \label{def:wasserstein-metric}
  Let $\cat{C}$ be a finitely presented category. For any $1 \leq p < \infty$,
  the \emph{Wasserstein metric of order $p$} on metric measure $\cat{C}$-spaces
  is given by, for $X,Y \in [\cat{C},\MM]$,
  \begin{equation*}
    d_{W,p}(X,Y) := \inf_{\Phi: X \to Y} \left(
      \sum_{f: c \to c'} W_p(Xf \cdot \Phi_{c'}, \Phi_c \cdot Yf)^p
    \right)^{1/p},
  \end{equation*}
  where the sum is over a fixed, finite generating set of morphisms in $\cat{C}$
  and the infimum is over all Markov transformations $\Phi: X \to Y$, not
  necessarily natural, whose components $\Phi_c: X(c) \to Y(c)$ are short
  morphisms (so belong to $\Short(\MMarkov)$).
\end{definition}

In view of the concepts combined, this metric should perhaps be called the
``Hausdorff-Wasserstein metric'' or even, if we are taking the history seriously
\parencite{vershik2013}, the ``Hausdorff-Kantorovich-Rubinstein metric.''
However, we find these names too cumbersome to contemplate.

The proposed metric is indeed a metric, by \cref{thm:hausdorff-metric}. Like any
Hausdorff metric on $\cat{C}$-sets, it is not generally finite, positive
definite, or symmetric, but the failure of positive definiteness is more severe
than before, since $d_{W,p}(X,Y) = 0$ whenever there exists a Markov morphism
$\Phi: X \to Y$ with components in $\Short(\MMarkov)$. More generally, the
Wasserstein metric is a relaxation of the Hausdorff-$L^p$ metric from
\cref{sec:hausdorff}, as clarified by the following inequality.

\begin{proposition}[Wasserstein metric as convex relaxation]
    \label{prop:wasserstein-relaxation}
  For any $1 \leq p < \infty$, the Wasserstein metric of order $p$ on mm
  $\cat{C}$-spaces is a convex relaxation of the Hausdorff-$L^p$ metric on mm
  $\cat{C}$-spaces. That is, for all mm $\cat{C}$-spaces $X$ and $Y$,
  \begin{equation*}
    d_{W,p}(X,Y) \leq d_{H,p}(X,Y),
  \end{equation*}
  where
  \begin{equation*}
    d_{H,p}(X,Y) = \inf_{\phi: X \to Y} \left( \sum_{f: c \to c'}
      d_{L^p}(Xf \cdot \phi_{c'}, \phi_c \cdot Yf)^p
    \right)^{1/p}
  \end{equation*}
  is the Hausdorff metric on $\cat{C}$-sets in the metric category $\MM$ with
  its $L^p$ metric. Moreover, the problem of computing $d_{W,p}(X,Y)^p$ can be
  formulated as a convex optimization problem, possibly in infinite dimensions.
\end{proposition}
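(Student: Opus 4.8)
The plan is to establish the inequality by comparing the two infima, and the convex-formulation claim by unfolding the nested infima in \cref{def:wasserstein-metric} and collecting all decision variables into one problem.

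For $d_{W,p}(X,Y)\le d_{H,p}(X,Y)$ I would argue as follows. A competitor in the Hausdorff-$L^p$ problem is a transformation $\phi\colon X\to Y$, not necessarily natural, whose components $\phi_c$ lie in $\Short(\MM)$. Applying $\MarkovFun$ componentwise produces a Markov transformation $\MarkovFun(\phi)=(\MarkovFun(\phi_c))_{c\in\cat{C}}$ whose components lie in $\Short(\MMarkov)$, since $\MarkovFun\colon\MM\to\MMarkov$ restricts to $\Short(\MM)\to\Short(\MMarkov)$; so $\MarkovFun(\phi)$ is a legitimate competitor in the Wasserstein problem. By functoriality, $Xf\cdot\MarkovFun(\phi_{c'})=\MarkovFun(Xf\cdot\phi_{c'})$ and $\MarkovFun(\phi_c)\cdot Yf=\MarkovFun(\phi_c\cdot Yf)$ are deterministic, so by the second part of \cref{prop:wasserstein-deterministic} their $W_p$-distance equals $d_{L^p}(Xf\cdot\phi_{c'},\phi_c\cdot Yf)$. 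Hence the Wasserstein objective evaluated at $\MarkovFun(\phi)$ agrees, generator by generator, with the Hausdorff-$L^p$ objective evaluated at $\phi$. Since the infimum defining $d_{W,p}$ ranges over a set that includes every $\MarkovFun(\phi)$, it is bounded above by the infimum over $\phi$, namely $d_{H,p}(X,Y)$.

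For the convex formulation I would raise to the $p$-th power and expand: $d_{W,p}(X,Y)^p=\inf_{\Phi}\sum_{f\colon c\to c'}W_p(Xf\cdot\Phi_{c'},\Phi_c\cdot Yf)^p$, and each such $W_p$-term is itself an infimum over couplings. Absorbing these inner infima into the outer one (a routine rewriting of an iterated infimum as a joint infimum; by \cref{prop:wasserstein-existence} the inner infima are moreover attained) yields a single optimization over the Markov kernels $\Phi_c\colon X(c)\to Y(c)$, $c\in\cat{C}$, together with one coupling kernel $\Pi_f\colon X(c)\to Y(c')\times Y(c')$ for each generator $f\colon c\to c'$, subject to: the first and second $Y(c')$-marginals of $\Pi_f$ equal $Xf\cdot\Phi_{c'}$ and $\Phi_c\cdot Yf$; each $\Phi_c$ lies in $\Short(\MMarkov)$; and $\Phi_c$, $\Pi_f$ are stochastic. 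The objective $\sum_f\int_{X(c)\times Y(c')^2}d_{Y(c')}(y,y')^p\,\Pi_f(dy,dy'\given x)\,\mu_{X(c)}(dx)$ is linear in the $\Pi_f$. The feasible set is convex: composition with a fixed kernel ($Xf$, $Yf$, or a coordinate projection) is an affine operation on kernels, so the marginal constraints cut out an affine subspace; stochasticity is a system of linear (in)equalities; and $\Short(\MMarkov)(X(c),Y(c))$ is convex by \cref{prop:short-mmarkov}, the measure-decreasing kernels being defined by linear inequalities and the distance-decreasing kernels being convex because a convex combination of self-products that witness the distance bound for two kernels witnesses it for their convex combination. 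Thus $d_{W,p}(X,Y)^p$ is the value of a convex program; promoting the distance-decreasing witnesses to explicit auxiliary variables even exhibits it as a linear program, finite-dimensional when $X$ and $Y$ are finite.

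The delicate point, which I expect to demand the most care, is the bookkeeping around the short-morphism constraint: verifying from \cref{prop:short-mmarkov} that the distance- and measure-decreasing Markov kernels form a convex set, and, for the linear-program refinement, that introducing the self-product witnesses $\Theta_c\in\Prod(\Phi_c,\Phi_c)$ as variables keeps every constraint affine --- the marginal conditions tying $\Theta_c$ to $\Phi_c$, and the bound $\int d_{Y(c)}(y,y')^p\,\Theta_c(dy,dy'\given x,x')\le d_{X(c)}(x,x')^p$. A secondary, routine check is that pre- and post-composition with the fixed structure maps $Xf,Yf$ and with the projections are linear operations on the space of Markov kernels, which is exactly what makes the marginal constraints affine in the joint decision variable.
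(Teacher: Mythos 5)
Your proposal is correct and follows essentially the same route as the paper: the inequality via the componentwise embedding $\MarkovFun$ together with \cref{prop:wasserstein-deterministic} (so that weights are preserved generator by generator), and the convex formulation by absorbing the coupling infima into a single joint optimization with linear objective and affine marginal constraints. The only cosmetic difference is that the paper introduces the self-product witnesses $\Pi_c \in \Prod(\Phi_c,\Phi_c)$ as explicit optimization variables from the outset, whereas you first argue convexity of the short-kernel set directly and then note the witnesses can be promoted to variables --- both of which are valid.
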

\begin{proof}
  We first prove the relaxation. As in the proof of \cref{thm:hausdorff-metric},
  we write $|\phi|_f := d_{L^p}(Xf \cdot \phi_{c'}, \phi_c \cdot Yf)$ for the
  weight of a transformation $\phi: X \to Y$ at a morphism $f: c \to c'$, and,
  similarly, we write $|\Phi|_f := W_p(Xf \cdot \Phi_{c'}, \Phi_c \cdot Yf)$ for
  the weight of a Markov transformation. By the discussion preceding
  \cref{prop:short-mmarkov}, for any transformation $\phi: X \to Y$ with
  components in $\Short(\MM)$, there is a Markov transformation
  $\MarkovFun(\phi) := (\MarkovFun(\phi_c))_{c \in \cat{C}}$ with components in
  $\Short(\MMarkov)$ and of the same weight, $|\MarkovFun(\phi)|_f = |\phi|_f$.
  Consequently,
  \begin{equation*}
    d_{W,p}(X,Y)
      = \inf_{\Phi: X \to Y} \bigpplus_{f: c \to c'} |\Phi|_f
      \leq \inf_{\phi: X \to Y} \bigpplus_{f: c \to c'} |\MarkovFun(\phi)|_f
      = d_{H,p}(X,Y).
  \end{equation*}

  As for the convexity, since the infimum in the Wasserstein metric on Markov
  kernels is attained (\cref{prop:wasserstein-existence}), the value
  $d_{W,p}(X,Y)^p$ is the infimum of
  \begin{equation*}
    \sum_{f: c \to c'} \int d_{Y(c')}(dy,dy')^p\,
      \Pi_f(y,y' \given x) \mu_{X(c)}(dx)
  \end{equation*}
  taken over the Markov kernels
  \begin{equation*}
    \Phi_c: X(c) \to Y(c), \quad
    \Pi_c \in \Prod(\Phi_c,\Phi_c), \quad
    \Pi_f \in \Coup(Xf \cdot \Phi_{c'}, \Phi_c \cdot Yf),
  \end{equation*}
  indexed by objects $c \in \cat{C}$ and morphisms $f: c \to c'$ generating
  $\cat{C}$, and subject to the constraints, for all $c \in \cat{C}$,
  \begin{align*}
    \mu_{X(c)} \cdot \Phi_c &\leq \mu_{Y(c)} \\
    \int d_{Y(c)}(y,y')^p\, \Pi_c(dy,dy' \given x,x') &\leq d_{X(c)}(x,x')^p,
      \quad \forall x,x' \in X(c)
  \end{align*}
  In this optimization problem, the optimization variables belong to convex
  spaces, the objective is linear, and all the constraints, including those for
  couplings and products, are linear equalities or inequalities. The problem is
  therefore convex.
\end{proof}

As in \cref{sec:markov}, the optimization problem is a linear program provided
the $\cat{C}$-sets are finite. Let $X$ and $Y$ be finite metric measure
$\cat{C}$-spaces. Identifying Markov kernels with right stochastic matrices,
measures with row vectors, and exponentiated metrics $d_{X(c)}^p$ with column
vectors $\delta_{X(c)} \in \R^{|Y(c)|^2}$, the distance $d_{W,p}(X,Y)^p$ is the
value of the linear program:
\begin{align*}
  \minimize \quad
    & \sum_{f: c \to c'} \langle \delta_{Y(c)},\, \mu_{X(c)} \Pi_f \rangle
      \displaybreak[0] \\
  \text{over} \quad
    & \Phi_c \in \R^{|X(c)| \times |Y(c)|},\
      \Pi_c \in \R^{|X(c)|^2 \times |Y(c)|^2},\ c \in \cat{C} \\
    & \Pi_f \in \R^{|X(c)| \times |Y(c')|^2},\ f: c \to c' \text{ in } \cat{C}
      \displaybreak[0] \\
  \text{subject to} \quad
    & \Phi_c, \Pi_c, \Pi_f \geq 0,\
      \Phi_c \cdot \ones = \ones,\
      \Pi_c \cdot \ones = \ones,\
      \Pi_f \cdot \ones = \ones, \\
    & \mu_{X(c)} \cdot \Phi_c \leq \mu_{Y(c)}, \\
    & \Pi_c \cdot \delta_{Y{(c)}} \leq \delta_{X(c)}, \\
    & \Pi_c \cdot \proj_1 = \proj_1 \cdot \Phi_c,\
      \Pi_c \cdot \proj_2 = \proj_2 \cdot \Phi_c, \\
    & \Pi_f \cdot \proj_1 = Xf \cdot \Phi_{c'},\
      \Pi_f \cdot \proj_2 = \Phi_c \cdot Yf.
\end{align*}
We leave implicit in the notation the dimensionalities of the projection
operators $\proj_i$ and of column vectors $\ones$ consisting of all 1's.

While it appears forbidding, this linear program simplifies in certain common
situations. When $X(c)$ has the discrete metric, any Markov kernel
$\Phi_c: X(c) \to Y(c)$ is distance-decreasing, so the product $\Pi_c$ and
associated constraints can be eliminated from the program. When $X(c') = Y(c')$
and $\Phi_{c'}: X(c') \to Y(c')$ is fixed to be the identity, as happens for
fixed attribute sets, then for any morphism $f: c \to c'$, the Wasserstein
distance between $Xf$ and $\Phi_c \cdot Yf$ has a closed form
(\cref{prop:wasserstein-deterministic}). The coupling $\Pi_f$ and associated
constraints can thus be eliminated.

Both kinds of simplification occur in the next two examples.

\begin{example}[Classical Wasserstein metric] \label{ex:classical-wasserstein}
  Continuing \cref{ex:attributed-sets,ex:classical-hausdorff}, let $X$ and $Y$
  be attributed sets, equipped with discrete metrics and any probability
  measures, and taking attributes in a fixed mm space $\Attr$. Then $X$ and $Y$
  are attributed sets in $\MM$ and the Wasserstein metric of order $p$ between
  them is
  \begin{align*}
    d_{W,p}(X,Y)
      &= \inf_{M \in \Markov_*(X,Y)} \left(
          \int d_\Attr(\attr x, \attr y)^p\, M(dy \given x) \mu_X(dx)
        \right)^{1/p} \displaybreak[0] \\
      &= \inf_{\pi \in \Coup(\mu_x,\mu_Y)} \left(
          \int d_\Attr(\attr x, \attr y)^p\, \pi(dx,dy)
        \right)^{1/p},
  \end{align*}
  where the second equality follows by the disintegration theorem
  (\cref{thm:disintegration-measure}). In particular, $d_{W,p}(X,Y)^p$ is the
  value of an optimal transport problem. When $X \xrightarrow{\attr} \Attr$ and
  $Y \xrightarrow{\attr} \Attr$ are injective, $X$ and $Y$ can be identified
  with subsets of $\Attr$ and we recover the classical Wasserstein metric,
  namely $d_{W,p}(X,Y) = W_p(\mu_X,\mu_Y)$.
\end{example}

\begin{example}[Wasserstein metric on attributed graphs]
  Continuing \cref{ex:attributed-graphs,ex:hausdorff-attributed-graphs}, let $X$
  and $Y$ be vertex-attributed graphs taking attributes in a fixed mm space
  $\Attr$. Equip the vertex and edge sets with discrete metrics and with any
  fully supported measures. Then $X$ and $Y$ are attributed graphs in $\MM$ and
  the Wasserstein metric of order $p$ between them is
  \begin{equation*}
    d_{W,p}(X,Y) = \inf_{\Phi: X \to Y} \left(
      \int d_\Attr(\attr v, \attr v')^p\, \Phi_V(dv' \given v) \mu_{X(V)}(dv)
    \right)^{1/p},
  \end{equation*}
  where the infimum is over Markov graph morphisms $\Phi: X \to Y$ with
  measure-decreasing components $\Phi_V$ and $\Phi_E$.
\end{example}

This metric takes an infinite value whenever no Markov graph morphism exists.
The next example features a weaker metric, presented, for simplicity, in the
case of unattributed graphs.

\begin{example}[Weak Wasserstein metric on graphs]
  Let $X$ and $Y$ be graphs. Continuing \cref{ex:weak-hausdorff-graphs}, equip
  the edge sets with discrete metrics, the vertex sets with shortest path
  distances, and the vertex and edge sets with counting measures. The
  Wasserstein metric of order 1 is then
  \begin{equation*}
    d_{W,1}(X,Y) =
      \inf_{\substack{\Phi_E: X(E) \to Y(E) \\ \Phi_V: X(V) \to Y(V)}}
      \sum_{\vertex \in \{\src,\tgt\}}
        W_1(X(\vertex) \cdot \Phi_V, \Phi_E \cdot Y(\vertex)),
  \end{equation*}
  where the infimum is over measure-decreasing Markov kernels $\Phi_E$ and
  distance- and measure-decreasing Markov kernels $\Phi_V$. By
  \cref{prop:wasserstein-relaxation}, this metric relaxes the Hausdorff metric
  of \cref{ex:weak-hausdorff-graphs}. It is genuinely weaker: on directed
  cycles, we have $d_{W,1}(C_m,C_n) = 0$ when $1 \leq m \leq n$, as witnessed by
  the uniform Markov graph morphisms of \cref{ex:markov-graph-morphism}. (They
  do not increase distances, like any Markov kernels equal everywhere to a
  constant distribution.) We still have $d_{W,1}(C_m,C_n) = \infty$ when $m > n$
  due to the measure-decreasing constraint.
\end{example}

\section{Conclusion}
\label{sec:conclusion}

We have introduced Hausdorff and Wasserstein metrics on graphs and other
$\cat{C}$-sets and illustrated them through a variety of examples. That being
said, we have established only the most basic properties of the concepts
involved. Possibilities abound for extending this work, in both theoretical and
practical directions. Let us mention a few of them.

Although encompassing graphs, simplicial sets, dynamical systems, and other
structures, the formalism of $\cat{C}$-sets remains possibly the simplest
equational logic, sitting at the bottom of a hierarchy of increasingly
expressive systems \parencite{lambek1986,crole1993}. By admitting categories
$\cat{C}$ with extra structure, such as sums, products, or exponentials, more
complicated structures can be realized as structure-preserving functors from
$\cat{C}$ into $\Set$ or some other category $\cat{S}$. For example, categories
with finite products describe monoids, groups, rings, modules, and other
familiar algebraic structures. This is the original setting of categorical logic
\parencite{lawvere1963}.

Many of the ideas developed here extend to categories $\cat{C}$ with extra
structure. The pertinent questions are whether the extra structure can be
accommodated in the category $\Markov$ and its variants and how this affects the
computation. Sums (coproducts) and units (terminal objects) are easily handled.
$\Markov$ has finite sums and a unit, and since the direct sum of Markov kernels
is a linear operation, it preserves the class of linear optimization problems.
Products are more important and more delicate. $\Markov$ does not have
categorical products, and its natural tensor product, the independent product,
is not a linear operation. In keeping with the spirit of this paper, products in
$\cat{C}$ should be translated into optimal couplings in $\Markov$, resulting in
a larger optimization problem. We leave a proper development of this idea to
future work.

A linear program is solvable in polynomial time and therefore improves
dramatically in tractability over an NP-hard combinatorial problem.
Nevertheless, solving a generic linear program is not always practical. Indeed,
the recent surge in popularity of optimal transport is due partly to the
introduction, by Cuturi and others, of specialized algorithms for solving the
optimal transport program, which far outperform generic interior-point solvers
\parencite{cuturi2013,peyre2019}. It would be useful to know whether and how
these fast algorithms for optimal transport can be adapted to the linear
programs in this paper.

In the new algorithms for optimal transport, the optimization objective is
augmented by a term proportional to the negative entropy of the coupling, a
technique known as \emph{entropic regularization}. With this addition, the
optimal transport problem improves from merely convex to strongly convex and, in
particular, has a unique solution. Besides being useful for optimization,
entropic regularization has a statistical interpretation as Gaussian
deconvolution \parencite{rigollet2018}.

For the Markov morphism feasibility problem of \cref{sec:markov}, adding
entropic regularization yields an optimization problem whose solution is the
Markov morphism of maximum entropy. For instance, in
\cref{fig:graph-markov-mixture}, the maximum entropy Markov morphism is the
uniform mixture $\Phi = \tfrac{1}{2} \phi_1 + \tfrac{1}{2} \phi_2$ of the two
graph homomorphisms $\phi_1$ and $\phi_2$. In
\cref{fig:graph-markov-directed-cycle}, the unique Markov morphism has the
maximum possible entropy, with all vertex and edge distributions being uniform.
As these examples show, entropic regularization is antithetically opposed to the
recovery of deterministic solutions. Entropic regularization should be
investigated more systematically in this context, for algorithmic reasons and
for its intrinsic interest.

\appendix

\section{Markov kernels}
\label{app:markov}

A Markov kernel is the probabilistic analogue of a function, assigning to every
point in its domain not a single point in its codomain but a whole probability
distribution over its codomain. Markov kernels are fundamental objects in
probability theory and statistics
\parencite{cencov1982,kallenberg2002,kallenberg2017,klenke2013}. In this
appendix, we recall the definition and basic properties of Markov kernels, as
well as a few more obscure results from the literature.

\begin{definition}[Markov kernels] \label{def:markov-kernel}
  Let $(X,\Sigma_X)$ and $(Y,\Sigma_Y)$ be measurable spaces. A \emph{Markov
    kernel} from $X$ to $Y$, also known as a \emph{probability kernel} or
  \emph{stochastic kernel}, is a function $M: X \times \Sigma_Y \to [0,1]$ such
  that
  \begin{enumerate}[(i)]
  \item for all points $x \in X$, the map $M(x;\blank): \Sigma_Y \to [0,1]$ is a
    probability measure on $Y$, and
  \item for all sets $B \in \Sigma_Y$, the map $M(\blank; B): X \to [0,1]$ is
    measurable.
  \end{enumerate}
  We usually write $M(dy \given x)$ instead of $M(x;dy)$, in agreement with the
  standard notation for conditional probability.
\end{definition}

Equivalently, a Markov kernel from $X$ to $Y$ is a measurable map
$X\to \ProbSpace(Y)$, where $\ProbSpace(Y)$ is the space of all probability
measures on $Y$ under the $\sigma$-algebra generated by the evaluation maps
$\mu \mapsto \mu(B)$, $B \in \Sigma_Y$ \parencite[Lemma 1.40]{kallenberg2002}.
With this perspective, it is natural to denote a Markov kernel simply as
$M: X \to Y$ and to write $M(x)$ for the distribution $M(x; \blank)$ of $x$
under $M$.

If Markov kernels are probabilistic functions, then they ought to be composable.
They are, according to a standard definition \parencite[Definition
14.25]{klenke2013}.

\begin{definition}[Composition of Markov kernels] \label{def:markov-composition}
  The \emph{composition} of a Markov kernel $M: X \to Y$ with another Markov
  kernel $N: Y \to Z$ is the Markov kernel $M \cdot N: X \to Z$ defined by
  \begin{equation*}
    (M \cdot N)(C \given x) := \int_Y N(C \given y)\, M(dy \given x),
    \qquad x \in X, \quad C \in \Sigma_Z.
  \end{equation*}
  The identity $1_X: X \to X$ with respect to this composition law is the usual
  identity function regarded as a Markov kernel, $1_X: x \mapsto \delta_x$.
\end{definition}

A third perspective on Markov kernels is that they are linear operators on
spaces of measures or probability measures (see, for instance,
\parencite[Theorem 5.2 and Lemma 5.10]{cencov1982} or \parencite[Section
3.3]{worm2010}).

\begin{definition}[Markov operators] \label{def:markov-operator}
  Let $M: X \to Y$ be a Markov kernel. Given a measure $\mu$ on $X$, define the
  measure $\mu M$ on $Y$ by
  \begin{equation*}
    (\mu M)(B) := \int_X M(B \given x)\, \mu(dx), \qquad B \in \Sigma_Y.
  \end{equation*}
  With this definition, $M$ is a \emph{Markov operator}: writing $\MeasSpace(X)$
  for the space of all finite nonnegative measures on $X$, the Markov kernel $M$
  acts as linear map $\MeasSpace(X) \to \MeasSpace(Y)$ that preserves the total
  mass, $\mu M(Y) = \mu(X)$. In particular, $M$ acts as a linear map
  $\ProbSpace(X) \to \ProbSpace(Y)$ on spaces of probability measures.
\end{definition}

Let $\mu$ be a measure on $X$ and let $M: X \to Y$ be a Markov kernel. Besides
applying $M$ to $\mu$, yielding the measure $\mu M$ on $Y$, we can also take the
\emph{product} of $\mu$ and $M$, yielding a measure $\mu \otimes M$ on
$X \times Y$ defined on measurable rectangles by
\begin{equation*}
  (\mu \otimes M)(A \times B) := \int_A M(B \given x)\, \mu(dx),
\end{equation*}
where $A \in \Sigma_X$ and $B \in \Sigma_Y$. The product measure $\mu \otimes M$
has marginal $\mu$ along $X$ and marginal $\mu M$ along $Y$. In the special case
where $M(x)$ equals a fixed measure $\nu$ for all $x$, we recover the usual
product measure $\mu \otimes \nu$.

It is often useful to know that the product operation is invertible. The inverse
operation is called \emph{disintegration}.

\begin{theorem}[{Disintegration of measures
    \parencite[Theorem 1.23]{kallenberg2017}}]
    \label{thm:disintegration-measure}
  Let $X$ be a measurable space and $Y$ be a Polish space. For any
  $\sigma$-finite measure $\pi$ on $X \times Y$, with $\sigma$-finite marginal
  $\mu$ along $X$, there exists a Markov kernel $M: X \to Y$ such that
  $\pi = \mu \otimes M$. Moreover, if $M': X \to Y$ is any other Markov kernel
  with this property, then $M(x) = M'(x)$ for $\mu$-almost every $x \in X$.
\end{theorem}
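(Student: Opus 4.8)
The plan is to reduce to the classical construction of regular conditional distributions on the real line. First, since $\mu$ is $\sigma$-finite, I would partition $X = \bigsqcup_n X_n$ into measurable pieces with $\mu(X_n) < \infty$; because $\mu$ is the marginal of $\pi$ along $X$, each restriction $\pi_n := \pi|_{X_n \times Y}$ is then a \emph{finite} measure with $X$-marginal $\mu_n := \mu|_{X_n}$. If each $\pi_n$ admits a disintegration $\pi_n = \mu_n \otimes M_n$, the pieces assemble into a single kernel by setting $M(x) := M_n(x)$ for $x \in X_n$ (measurable, since the $X_n$ form a countable measurable partition; on pieces with $\mu(X_n) = 0$ the choice is irrelevant), and one checks $\pi = \mu \otimes M$ on measurable rectangles, hence everywhere. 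After rescaling each piece of positive mass, it therefore suffices to treat the case where $\mu$ and $\pi$ are probability measures.

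Second, since $Y$ is Polish it is Borel isomorphic to a Borel subset $B \subseteq \mathbb{R}$ via some $\varphi : Y \to B$; pushing $\pi$ forward along $1_X \times \varphi$ gives a probability measure on $X \times \mathbb{R}$ concentrated on $X \times B$ and still with first marginal $\mu$, so it is enough to disintegrate on $X \times \mathbb{R}$ and transport the resulting kernel back through $\varphi^{-1}$ (that kernel is concentrated on $B$ for $\mu$-almost every $x$, since integrating its value on $B^c$ against $\mu$ gives $0$; on the exceptional null set fix an arbitrary probability measure). For $Y = \mathbb{R}$: for each rational $q$ the set function $A \mapsto \pi(A \times (-\infty, q])$ is a finite measure on $X$ dominated by $\mu$, so Radon–Nikodym yields a measurable density $g_q : X \to [0,1]$. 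Outside a single $\mu$-null set — the union of the countably many exceptional sets, one for each pair of rationals and one each for the limits at $\pm\infty$ — the family $(g_q(x))_{q \in \mathbb{Q}}$ is nondecreasing and tends to $0$ and $1$ at $\mp\infty$, so $F(x,t) := \inf\{\, g_q(x) : q \in \mathbb{Q},\ q > t \,\}$ is a genuine cumulative distribution function in $t$. Let $M(x)$ be the Borel probability measure on $\mathbb{R}$ with this distribution function, and an arbitrary probability measure on the null set. Then $x \mapsto M(x; (-\infty,t])$ is measurable for every $t$, so a Dynkin (monotone class) argument promotes this to the measurability requirement of \cref{def:markov-kernel} for all Borel sets; and $\mu \otimes M$ agrees with $\pi$ on the $\pi$-system of rectangles $A \times (-\infty, q]$ by construction, hence on the full product $\sigma$-algebra.

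Third, for uniqueness, suppose $M$ and $M'$ both disintegrate $\pi$. For each fixed Borel $B \subseteq Y$, the equality $\int_A M(B \given x)\, \mu(dx) = \pi(A \times B) = \int_A M'(B \given x)\, \mu(dx)$ for all $A \in \Sigma_X$ forces $M(B \given x) = M'(B \given x)$ for $\mu$-almost every $x$. Since $Y$ is second countable, its Borel $\sigma$-algebra is generated by a countable algebra $\mathcal{A}$; intersecting the corresponding $\mu$-conull sets over $B \in \mathcal{A}$ produces one $\mu$-conull set on which $M(x)$ and $M'(x)$ agree on $\mathcal{A}$, hence agree as probability measures.

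I expect the main obstacle to be the real-line construction in the second step: carefully organizing the countably many null sets so that the distribution-function monotonicity and boundary behavior hold \emph{simultaneously}, producing a kernel genuinely defined at every point, and then cleanly transporting it back to an arbitrary Polish $Y$ through the Borel isomorphism. By contrast, the $\sigma$-finite reduction and the uniqueness argument are routine $\pi$-system and monotone-class bookkeeping.
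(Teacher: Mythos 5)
The paper does not prove this statement: it is quoted verbatim from Kallenberg (Theorem 1.23) as a background result in the appendix, so there is no in-paper argument to compare against. Your proof is correct and is essentially the standard construction of regular conditional distributions that the cited source itself uses: reduce by $\sigma$-finiteness to probability measures, transport through a Borel isomorphism of the Polish space $Y$ onto a Borel subset of $\mathbb{R}$, build the kernel from Radon--Nikodym densities of $A \mapsto \pi(A \times (-\infty,q])$ indexed by rationals, and settle uniqueness with a countable generating $\pi$-system. Two small points worth making explicit: the agreement of $\mu \otimes M$ with $\pi$ on rectangles $A \times (-\infty,q]$ rests on the a.e.\ identity $F(\cdot,q) = g_q$, which follows from continuity from above of the finite measure $\pi(A \times \cdot)$ together with monotone convergence applied to $\inf_{q' > q} g_{q'}$; and your use of the $\sigma$-finiteness of the \emph{marginal} $\mu$ (rather than of $\pi$ alone) to partition $X$ is the right move, since $\sigma$-finiteness of $\pi$ does not by itself yield a $\sigma$-finite marginal --- which is exactly why the theorem hypothesizes both. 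Your bookkeeping of the countably many exceptional null sets, taken uniformly over pairs of rationals and the two boundary limits, is the correct way to obtain a kernel defined at every point.
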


What is less well known is that Markov kernels into product spaces can also be
disintegrated. We use this result in \cref{sec:wasserstein-markov} to prove a
gluing lemma for Markov kernels.

\begin{theorem}[{Disintegration of Markov kernels
    \parencite[Theorem 1.25]{kallenberg2017}}]
    \label{thm:disintegration-markov} 
  Let $X$ be a measurable space and let $Y$ and $Z$ be Polish spaces. For any
  Markov kernel $\Pi: X \to Y \times Z$ with marginal $M: X \to Y$ along $Z$,
  there exists a Markov kernel $\Lambda: X \times Y \to Z$ such that
  $\Pi = M \otimes \Lambda$, where the product $M \otimes \Lambda$ is defined
  on measurable rectangles by
  \begin{equation*}
    (M \otimes \Lambda)(B \times C \given x) :=
      \int_B \Lambda(C \given x, y)\, M(dy \given x),
  \end{equation*}
  where $x \in X$, $B \in \Sigma_Y$, and $C \in \Sigma_Z$.
\end{theorem}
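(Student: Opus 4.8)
The plan is to derive this parametrized disintegration from the disintegration of measures (\cref{thm:disintegration-measure}) by carrying the parameter $x \in X$ through the classical construction of regular conditional distributions. Write $M = \Pi \cdot \proj_Y$ for the given marginal (so that $(M \otimes \Lambda)(B \times Z \given x) = M(B \given x)$ as in the displayed formula). The difficulty is not the pointwise statement: for each fixed $x$, applying \cref{thm:disintegration-measure} to the probability measure $\Pi(x)$ on $Y \times Z$, with $Y$-marginal $M(x)$, already produces a kernel $\Lambda_x \colon Y \to Z$ with $\Pi(x) = M(x) \otimes \Lambda_x$. The real issue is that the $\Lambda_x$ must be chosen so that $(x,y) \mapsto \Lambda_x(y) \in \ProbSpace(Z)$ is \emph{jointly} measurable; without joint measurability in $x$ there is no Markov kernel $X \times Y \to Z$.

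First I would reduce to the case $Z = [0,1]$: since $Z$ is Polish it is Borel isomorphic to a Borel subset of $[0,1]$, and a jointly measurable disintegration for the pushforward of $\Pi$ transports back. With $Z \subseteq [0,1]$, a kernel $X \times Y \to Z$ is determined by its conditional distribution functions. For each rational $t$, put $\Pi_t(A \given x) := \Pi(A \times ((-\infty,t]\cap Z) \given x)$; this is a sub-Markov kernel $X \to Y$ with $\Pi_t(\cdot \given x) \le M(\cdot \given x)$ for every $x$, hence $\Pi_t(\cdot \given x) \ll M(\cdot \given x)$ with densities bounded by $1$. The key lemma is a \emph{jointly measurable Radon--Nikodym theorem}: there is a jointly measurable $F_t \colon X \times Y \to [0,1]$ with $\Pi_t(A \given x) = \int_A F_t(x,y)\, M(dy \given x)$ for all $x \in X$ and $A \in \Sigma_Y$. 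I would prove this exactly as one shows the scalar Radon--Nikodym derivative is a pointwise martingale limit, but fixing a countable algebra generating $\Sigma_Y$ and running the conditional-expectation martingale along a refining sequence of its finite subalgebras; the limit is jointly measurable as a pointwise limit of jointly measurable functions and, for each $x$, equals $d\Pi_t(\cdot \given x)/dM(\cdot \given x)$ for $M(x)$-almost every $y$.

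It then remains to repair $F$ on a null set so that $t \mapsto F_t(x,y)$ is a genuine distribution function. The set of $(x,y)$ at which $t \mapsto F_t(x,y)$ fails to be nondecreasing on the rationals, or fails $\lim_{t \uparrow 1} F_t(x,y) = 1$, or fails right-continuity along rationals, is measurable and, for each fixed $x$, has $M(x)$-measure zero (each failure is a countable union of $M(x)$-a.e.\ statements, using the defining property of the $F_t$). Redefining $F$ there to equal one fixed valid distribution function keeps it jointly measurable while changing it on an $M(x)$-null set only, for each $x$. Let $\Lambda(x,y)$ be the probability measure on $Z$ with this distribution function; measurability of $(x,y) \mapsto \Lambda(C \given x,y)$ holds for $C$ an interval and extends to $\Sigma_Z$ by a Dynkin argument, so $\Lambda \colon X \times Y \to Z$ is a Markov kernel. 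Finally $\Pi = M \otimes \Lambda$ holds on rectangles $A \times ((-\infty,t]\cap Z)$, $t \in \mathbb{Q}$, by construction, and extends to all of $\Sigma_Y \otimes \Sigma_Z$ by the monotone class theorem. The main obstacle is the jointly measurable Radon--Nikodym step together with the measurable repair of $\Lambda(x,y)$ — precisely where Polishness of $Z$ is indispensable; the remainder is the bookkeeping familiar from the construction of regular conditional probabilities, and since the statement claims only existence, uniqueness need not be addressed.
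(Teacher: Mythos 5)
The paper does not actually prove this theorem; it is imported by citation from Kallenberg (Theorem 1.25), so there is no internal proof to compare against. Your argument is, in outline, the standard proof of that cited result: reduce $Z$ to $[0,1]$ via Borel isomorphism, obtain jointly measurable Radon--Nikodym densities of the truncated kernels $\Pi_t(\cdot \mid x)$ with respect to $M(\cdot \mid x)$ as a martingale limit along a refining sequence of finite algebras generating the countably generated $\Sigma_Y$, repair on a jointly measurable set with $M(x)$-null sections to obtain genuine distribution functions, and close with a monotone class argument; this is correct and is where Polishness of $Y$ (countable generation) and of $Z$ (regularization of distribution functions) each enter. Two points you gloss deserve a sentence each: (i) after transporting back along the Borel isomorphism you must perform one further repair on the jointly measurable set where $\Lambda(Z \mid x,y) < 1$, so that $\Lambda$ genuinely lands in $Z$ rather than in $[0,1]$; and (ii) the martingale construction delivers the identity $\Pi_t(A \mid x) = \int_A F_t(x,y)\, M(dy \mid x)$ only for $A$ in the chosen generating algebra, and one must extend it to all of $\Sigma_Y$ by a Dynkin-system argument before the final monotone class step. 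Neither is a gap in substance, only in bookkeeping.
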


In the special case where $X = \{*\}$ is a singleton, a Markov kernel
$\Pi: X \to Y \times Z$ is a probability distribution on $Y \times Z$ and we
recover a version of the previous theorem.

Under mild assumptions, optimal couplings of probability measures exist,
according to a standard result in optimal transport \parencite[Theorem
4.1]{villani2008}. Markov kernels have optimal couplings under the same
assumptions, though proving it is more involved. Versions of the following
theorem appear in the literature as \parencite[Theorem 20.1.3]{douc2018},
\parencite[Corollary 5.22]{villani2008}, and \parencite[Theorem 1.1]{zhang2000}.

\begin{theorem}[Optimal coupling of Markov kernels]
    \label{thm:optimal-coupling-markov}
  Let $X$ be a measurable space, $Y$ be a Polish space, and
  $c: Y \times Y \to \R$ be a nonnegative, lower semi-continuous cost function.
  For any Markov kernels $M,N: X \to Y$, there exists a Markov kernel
  $\Pi: X \times X \to Y \times Y$ such that, for every $x, x' \in X$,
  \begin{enumerate}[(i)]
  \item $\Pi(x,x')$ is a coupling of $M(x)$ and $N(x')$, and
  \item $\Pi(x,x')$ is optimal with respect to the cost function $c$, i.e.,
    \begin{equation*}
      \int_{Y \times Y} c(y,y')\, \Pi(dy,dy' \given x,x') =
        \inf_{\pi \in \Coup(M(x),N(x'))} \int_{Y \times Y} c(y,y')\, \pi(dy,dy').
    \end{equation*}
  \end{enumerate}
\end{theorem}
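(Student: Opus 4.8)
The plan is to prove the theorem by a measurable selection argument. The heart of the matter is to produce a Borel-measurable map $s\colon \ProbSpace(Y) \times \ProbSpace(Y) \to \ProbSpace(Y \times Y)$ such that $s(\mu,\nu)$ is an optimal coupling of $\mu$ and $\nu$ for the cost $c$, and then to set $\Pi(x,x') := s(M(x),N(x'))$. Since $Y$ is Polish, so is $Y \times Y$, hence $\ProbSpace(Y)$ and $\ProbSpace(Y \times Y)$ are Polish in their weak topologies, and a Markov kernel into a Polish space is the same thing as a measurable map into its space of probability measures. Thus both the reduction and the final step are legitimate provided $s$ is Borel and $(x,x') \mapsto (M(x),N(x'))$ is measurable; the latter holds by the very definition of a Markov kernel, and the composite $(x,x') \mapsto s(M(x),N(x'))$ is then measurable, so it is the desired kernel $X \times X \to Y \times Y$, and properties (i) and (ii) hold by construction.

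First I would record the classical single-pair existence result in the form needed. For fixed probability measures $\mu,\nu$ on $Y$, the set $\Coup(\mu,\nu)$ is nonempty, convex, and weakly compact: relative compactness follows from tightness of $\{\mu\}$ and $\{\nu\}$ (single measures on a Polish space are tight) together with Prokhorov's theorem, and $\Coup(\mu,\nu)$ is weakly closed. The functional $\pi \mapsto \int_{Y\times Y} c\,d\pi$ is lower semi-continuous on $\ProbSpace(Y\times Y)$, because the nonnegative lower semi-continuous $c$ is an increasing supremum of bounded continuous functions, so $\pi \mapsto \int c\,d\pi$ is an increasing supremum of weakly continuous functionals. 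Consequently the minimizing set $\mathrm{Opt}_c(\mu,\nu)$ is a nonempty compact subset of $\ProbSpace(Y\times Y)$, and $C^{\ast}(\mu,\nu) := \inf_{\pi \in \Coup(\mu,\nu)} \int c\,d\pi$ is finite.

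Next I would establish measurable dependence on the parameters. For each Borel $A \subseteq Y$ the evaluation $\pi \mapsto \pi(A \times Y)$ is Borel on $\ProbSpace(Y \times Y)$, so the graph $\{(\mu,\nu,\pi) : \pi \in \Coup(\mu,\nu)\}$ is Borel; this, combined with the lower semi-continuity of $\pi \mapsto \int c\,d\pi$, shows that the multifunction $(\mu,\nu) \mapsto \Coup(\mu,\nu)$ is weakly measurable with compact values and that $C^{\ast}$ is Borel (indeed lower semi-continuous). Hence the multifunction $(\mu,\nu) \mapsto \mathrm{Opt}_c(\mu,\nu)$ has nonempty compact values and Borel graph, so it too is weakly measurable, and the Kuratowski--Ryll-Nardzewski measurable selection theorem furnishes the required Borel-measurable selection $s$. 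Composing with $(x,x') \mapsto (M(x),N(x'))$ yields $\Pi$.

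The step I expect to be the main obstacle is the measurability bookkeeping in the third paragraph: one must check that the optimal-coupling multifunction genuinely satisfies the hypotheses of a selection theorem that delivers a \emph{Borel} selection rather than merely a universally measurable one, and this rests squarely on the weak compactness of $\Coup(\mu,\nu)$ and the lower semi-continuity of the cost functional. Should only universal measurability be available (for instance via a Jankov--von Neumann uniformization of the Borel graph), one should note that this still produces a Markov kernel once the source $\sigma$-algebra is completed, or else restrict to the compact-valued setting where Kuratowski--Ryll-Nardzewski gives the sharper conclusion directly.
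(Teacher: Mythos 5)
The paper does not prove this theorem at all: it is stated in the appendix as a known result, with citations to Douc et al.\ (Theorem 20.1.3), Villani (Corollary 5.22), and Zhang (2000), so there is no in-paper proof to compare against. Your measurable-selection argument is precisely the strategy used in those references, and it is essentially sound: reduce to a Borel selection $s(\mu,\nu)$ of optimal couplings on $\ProbSpace(Y)\times\ProbSpace(Y)$, using compactness of $\Coup(\mu,\nu)$ (tightness plus Prokhorov) and lower semicontinuity of $\pi\mapsto\int c\,d\pi$, then compose with the measurable map $(x,x')\mapsto(M(x),N(x'))$. Two small points. First, $C^{\ast}(\mu,\nu)$ need not be finite for an unbounded lower semicontinuous $c$; this is harmless, since an extended-real-valued lower semicontinuous functional still attains its infimum on a nonempty compact set, but the finiteness claim should be dropped. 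Second, your worry about Borel versus universally measurable selections is the genuine crux, and the clean way out is exactly the one your compactness observation points to: the graph of $(\mu,\nu)\mapsto\mathrm{Opt}_c(\mu,\nu)$ is Borel (using that $C^{\ast}$ is lower semicontinuous, which follows from upper hemicontinuity of the compact-valued coupling correspondence) with compact sections, and a Borel set in a product of Polish spaces with $\sigma$-compact sections admits a Borel uniformization (Arsenin--Kunugui); plain Kuratowski--Ryll-Nardzewski would instead require verifying weak measurability of $\mathrm{Opt}_c$ directly, which is less immediate for merely lower semicontinuous costs. With that substitution the argument goes through as you describe.
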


We invoke this theorem in \cref{sec:wasserstein-markov} to show that the infimum
defining the Wasserstein metric on Markov kernels is attained.

\printbibliography

\end{document}